\newtheorem{thm}{Theorem}[section]
\newtheorem{lem}[thm]{Lemma}
\newtheorem{rem}[thm]{Remark}
\newtheorem{dfi}[thm]{Definition}
\newtheorem{pro}[thm]{Proposition}
\newtheorem{assumption}[thm]{Assumption}
\newcommand{\RR}{{\mathbb R}}
\newcommand{\FC}{{\mathfrak FC}^{\infty}_b}
\newcommand{\ep}{\varepsilon}
\newcommand{\Casimir}{{\mathrm C}}
\newcommand{\Var}{{\rm Var}}
\newcommand{\calh}{{\cal H}}
\numberwithin{equation}{section}
\begin{document}
\newcounter{aaa}
\newcounter{bbb}
\newcounter{ccc}
\newcounter{ddd}
\newcounter{eee}
\newcounter{xxx}
\newcounter{xvi}
\newcounter{x}
\setcounter{aaa}{1}
\setcounter{bbb}{2}
\setcounter{ccc}{3}
\setcounter{ddd}{4}
\setcounter{eee}{32}
\setcounter{xxx}{10}
\setcounter{xvi}{16}
\setcounter{x}{38}
\title
{Vanishing of one dimensional $L^2$-cohomologies of loop groups
\footnote{This research was partially supported by
Grant-in-Aid for Scientific Research (A) No.~21244009.}}
\author{Shigeki Aida\\
Mathematical Institute\\
Tohoku University,
Sendai, 980-8578, JAPAN\\
e-mail: aida@math.tohoku.ac.jp}
\date{}
\maketitle
\begin{abstract}
Let $G$ be a simply connected 
compact Lie group.
Let $L_e(G)$ be the based loop group with the base point
$e$ which is the identity element. 
Let $\nu_e$ be the pinned Brownian motion measure on 
$L_e(G)$ and let $\alpha\in 
L^2(\wedge^1T^{\ast}L_e(G),\nu_e)\cap 
{\mathbb D}^{\infty,p}(\wedge^1T^{\ast}L_e(G),\nu_e)$~$(1<p<2)$ be a closed 
$1$-form on
$L_e(G)$. Using results in rough path analysis, 
we prove that there exists a measurable function
$f$ on $L_e(G)$ such that $df=\alpha$.
Moreover we prove that $\dim\ker \square=0$ for
the Hodge-Kodaira type operator $\square$ 
acting on $1$-forms on $L_e(G)$.
\end{abstract}

\section{Introduction}

Let $(M,g)$ be a
compact Riemannian manifold.
Let $d$ be the exterior differential operator
on $M$.
Let $d^{\ast}$ be the adjoint operator of $d$ in
the $L^2$ space of differential forms with respect to
the Riemannian volume.
Let $\square=dd^{\ast}+d^{\ast}d$.
Celebrated Hodge-Kodaira theorem asserts that
$\dim\ker\square|_p=b_p$.
Here $\square|_p$ denotes the Hodge-Kodaira operator 
on the space of $p$-forms 
and $b_p$ is the (real coefficient)
Betti number of $M$.
This theorem does not hold any more in non-compact
Riemannian manifold.
On the other hand,
in infinite dimension, there exist natural measures,
such as (pinned) Brownian motion measures,
on spaces of paths over a Riemannian
manifold.
Several researchers have been trying to 
establish a differential geometry
and analysis including Hodge-Kodaira type theorem
based on Brownian motion measures.
Since the path space $P_x(M)=C([0,1]\to M~|~\gamma(0)=x)$
has trivial topology, one natural guess is that
there are no harmonic forms on $P_x(M)$ except $0$-dimension.
When $M$ is a Euclidean space and $x=0$,
the path space with the Brownian motion measure 
is the Wiener space.
The notion of $H$-derivative fits in with the differential calculus
based on the Wiener measure and Sobolev spaces are
defined according to the $H$-derivative.
However the vanishing of $L^2$ cohomologies in the Sobolev space
category
is not trivial
because smooth functions in the sense of $H$-derivative
need not to be smooth in the sense of Fr\'echet.
The vanishing theorem on Wiener space was proved by
Shigekawa~\cite{shigekawa0} in the setting of Sobolev spaces.

When $M$ is a general Riemannian manifold, the Bismut tangent space is
used to define a vector field and $H$-derivative on $P_x(M)$.
The Bismut tangent space appeared naturally 
in the study of integration by parts formula and 
the quasi-invariance of (pinned) Brownian motion measures
\cite{driver}.
This tangent space depends on the choice of the 
metric connection on $M$ and if the curvature does not vanish, then
the Lie bracket of the vector fields do not
belong to the Bismut tangent space.
This shows a difficulty to study exterior 
differential operators on $P_x(M)$.
We refer the reader to \cite{elworthy-li, leandre}
for this problem.
Let us consider a special case where $M$ is a compact Lie group $G$.
Since the curvature of the right (or left) invariant connection
of $G$ is 0,
the Bismut tangent space of $P_e(G)$ which is defined by the
right (or left) invariant connection is stable under the Lie bracket
and the exterior differential operator on $P_e(G)$ is well-defined.
Here $e$ is the identity element.
We note that Hodge-Kodaira's theorem on 
$P_e(G)$ was studied in \cite{fang-franchi2} using Shigekawa's result
on a Wiener space.

Now let us consider the pinned case.
Let
$L_x(M)=C([0,1]\to M~|~\gamma(0)=\gamma(1)=x)$.
We have difficulties for the definition of the exterior differential
operator similarly to $P_x(M)$.
Instead of working on $L_x(M)$, some researchers studied
differential calculus over submanifolds in the
Wiener space~\cite{am1, kazumi-shigekawa, aida-ou, malliavin}.
Typical submanifolds are obtained by 
solutions of stochastic differential equations (=SDEs) on
$M$. See (\ref{submanifold}).
The tangent space of the submanifold is defined to be a closed subspace of
the Cameron-Martin subspace of the Wiener space
and the Lie brackets of vector fields on the submanifold
are also vector fields on the submanifold.
That is,
the exterior differential operator
is well-defined.
In a certain case, since the submanifold is isomorphic in some sense
to $L_x(M)$ which has non-trivial topology,
one may expect that the dimension of
harmonic forms on the submanifold coincides
with the Betti number of $L_x(M)$.
Note that solutions of SDE are smooth in the sense of $H$-derivative
(or in the sense of Malliavin calculus) but generally discontinuous functional
of Brownian motions.
Hence these submanifolds are not 
submanifolds in usual sense and
the link between the analysis over the submanifolds
and the ``topology'' of them are very unclear subject.
Nevertheless, Kusuoka succeeded in proving a Hodge-Kodaira theorem
and announced positive results in
\cite{kusuoka}.
See \cite{kusuoka91, kusuoka92} also.
We explain his results in Section 2 briefly.

In the present paper, we study a Hodge-Kodaira theorem 
for $1$-forms on the based loop group $L_e(G)$, where $G$ is a
compact Lie group.
The exterior differential operator $d$ on $L_e(G)$
is defined using
the right (or left) invariant connection
in the similar manner to $P_e(G)$.
When $G$ is simply connected, $\pi_2(G)=0$
and so $\pi_1(L_e(G))=0$ and the first Betti number is
$0$.
Therefore one may conjecture a vanishing theorem of
``the Hodge-Kodaira operator''acting on $1$-forms on $L_e(G)$.
Indeed, this is one of the main results of this paper.
Our proof of vanishing theorem is different from
Kusuoka's ones.
Here we explain the outline of our proof.
First, we show that if $\alpha$ is a closed $1$-form on
$L_e(G)$, then there exists a function $f$ on $L_e(G)$
such that $df=\alpha$.
To show this,
using a map from a Wiener space to
$L_e(G)$, we change the problem to a problem on an ``open subset'' 
${\cal D}_{\ep}$ of
the Wiener space.
The map is given by a solution of
an SDE on
$G$
and a ``retraction map''
on the Wiener space.
The ``open subset'' ${\cal D}_{\ep}$ is
homotopy equivalent to $L_e(G)$ in some sense.
The property of ``open'' should be understood 
in the sense of rough path analysis.
The topology in the rough path analysis
is finer than the usual uniform convergence 
topology of the Wiener space and the solution of
SDE can be viewed as a continuous functional with 
respect to the topology.
The most important next step is to establish a Poincar\'e's lemma
on a ball-like set $U_{r}(\varphi)$ in the sense of rough path analysis.
That is, we prove that a closed $1$-form on $U_{r}(\varphi)$
is exact.
Note that ${\cal D}_{\ep}$ has a countable cover by 
the ball-like sets.
In the third step, using the topological property of
$\pi_1(L_e(G))=0$, we prove that
a closed $1$-form on ${\cal D}_{\ep}$ is exact
putting together the locally established 
Poincar\'e's lemma on $U_{r}(\varphi)$.
Applying this, for any closed $1$-form on $L_e(G)$,
we can show the existence of $f$
such that $df=\alpha$.
Finally, using this result, hypoellipticity
of Bochner Laplacian and essential self-adjointness of
Hodge-Kodaira operator on $L_e(G)$,
we can get our vanishing theorem.

The paper is organized as follows.
In Section 2, we state main results in this paper
and make some remarks.
In Section 3, 
we recall the necessary results in rough path analysis.
We fix a subset $\Omega$ of $d$-dimensional 
Wiener space $W^d$ on which
Brownian rough path is defined.
Then a version of the solution of SDE on a compact
Lie group $G$
can be defined for all $w\in \Omega$. 
Also we give necessary estimates for iterated integrals
and Wiener integrals which will be used in Section 4.
In Section 4, we introduce subsets $U_{r,\varphi}$,
$U_{r}(\varphi)$ 
and prove a Poincar\'e's lemma for closed $1$-forms
on the subsets in
Theorem~\ref{a vanishing theorem 0} and Theorem~\ref{a vanishing theorem}.
This kind of Poincar\'e lemma was studied by Kusuoka~
\cite{kusuoka92}.
Also Shigekawa~\cite{shigekawa3} studied Hodge-Kodaira operator with
absolute boundary condition on convex domains in Wiener spaces.
We note that $U_r(\varphi)$ is not an $H$-convex domain
and the Poincar\'e lemma is non-trivial.
To prove Theorem~\ref{a vanishing theorem 0} and 
Theorem~\ref{a vanishing theorem}, 
we prove Poincar\'e's inequalities on finite dimensional approximation
of $U_{r,\varphi}$ in Claim~2 in the proof of Theorem~\ref{a vanishing theorem 0}.
The point is that the Poincar\'e constant is
independent of the dimensions.
At the end of this section, we introduce subsets $S$,
${\mathcal D}_{\ep}$ of $\Omega$.
$S$ is a ``submanifold'' of $\Omega$ and 
isomorphic to $L_e(G)$ by the solution 
of the SDE on $G$.
Note that $\Omega$ is not a linear space and
$S$ is not a submanifold in usual sense.
The subset ${\mathcal D}_{\ep}$ is a kind of
``tubular neighborhood'' of $S$ in $\Omega$.
In Section 5, we prove that ${\mathcal D}_{\ep}$ is covered by 
a countable family of $U_r(\varphi)$.
In Section 6,
we introduce notions of 
$H$-connectedness and $H$-simply connectedness.
We prove that ${\mathcal D}_{\ep}$ is an $H$-connected and 
$H$-simply connected set
when $G$ is simply connected.
This and Stokes theorem (Lemma~\ref{stokes}) are used to
prove the existence of a function $F$
such that $dF=\beta$ for a closed $1$-form $\beta$ on
${\cal D}_{\ep}$.
In Section 7, we prove several results which are necessary for
reducing the problem on $L_e(G)$ to that on ${\mathcal D}_{\ep}$.
First, we state relations between Sobolev spaces on $S$ and $L_e(G)$.
Next, we define a retraction map
from ${\mathcal D}_{\ep}$ onto $S$.
This kind of retraction map are used in \cite{am2,gross,aida-ou}.
We obtain a closed form on ${\mathcal D}_{\ep}$
by the pull-back of a closed form on $L_e(G)$ using the retraction map.
We apply results in Section 4 to this closed form.
In Section 8, we prove our main theorems.

\section{Statement of results and remarks}
Let $W^d$ be the set of continuous paths on $\RR^d$
defined on $[0,1]$ 
starting at $0$.
We denote by $\mu$ the Wiener measure on $W^d$
whose Cameron-Martin subspace is 
$
H=H^1([0,1]\to\RR^d~|~h_0=0).
$
We recall the definition of Sobolev spaces (\cite{ikeda-watanabe}) over
the Wiener space $(W^d,H,\mu)$.
Let $\FC(W^d,E)$ be the set of all smooth cylindrical
functions with values in a separable Hilbert space $E$.
When $E=\RR$, we may omit $E$.
We denote by ${\mathbb D}^{k,p}(W^d,E)$ the 
set of $L^p$ functions with respect to $\mu$
on $W^d$ with values in $E$ 
which are $k$-times $H$-differentiable and
all their derivatives are also in $L^p(\mu)$.
We write ${\mathbb D}^{\infty}(W^d,E)=\cap_{k\ge 0,p>1}
{\mathbb D}^{k,p}(W^d,E)$.
Let $G$ be a compact Lie group and consider a
bi-invariant Riemannian metric on $G$.
Let $P_e(G)$ be the set of continuous paths which are defined on 
the time interval $[0,1]$ and the starting point is $e$.
Let $L_e(G)$ be the subset of $P_e(G)$
which consists of paths whose end points are also $e$.
Let $\nu$, $\nu_e$ be the Brownian motion measure on $P_e(G)$ and
the pinned Brownian motion measure on $L_e(G)$ respectively.
These measures are defined by the diffusion semigroup
$e^{t\Delta/2}$, where $\Delta$ is the Laplace-Beltrami operator
which is defined by the bi-invariant Riemannian metric.
Let $T_e(G)={\mathfrak g}$ be the Lie algebra of $G$.
We identify it as the set of right invariant vector fields.
The bi-invariant Riemannian metric defines
an inner product on ${\mathfrak g}$.
We fix an orthonormal basis $\{\ep_1,\ldots,\ep_d\}$ which enables us
to identify ${\mathfrak g}$ and $\RR^d$,
where $d=\dim G$.
Therefore we identify $H$ and
a set of $H^1$-paths over ${\mathfrak g}$ starting at $0$
in this way.
Set $H_0=\{h\in H~|~h_1=0\}$.
We recall the definition of $H$-derivative on $P_e(G)$ and $L_e(G)$.
For a smooth cylindrical function $F(\gamma)$ on $P_e(G) (\mbox{or}~L_e(G))$,
we define the $H$-derivative of $F$ to be a measurable map
$G=G(\gamma)$ (actually smooth map in this case) from 
$P_e(G) (\mbox{or}~L_e(G))$ to
$H^{\ast} (\mbox{or}~H_0^{\ast})$ which satisfies 
$$
\left(G(\gamma), h \right)=
\lim_{\ep\to 0}
\frac{F(e^{\ep h}\gamma)-F(\gamma)}{\ep}
$$
for all $h\in H$~$(\mbox{or}~h\in H_0)$,
where $\left(\cdot,\cdot\right)$ is the pairing of
the elements of 
$H^{\ast} (\mbox{or}~H_0^{\ast})$ and $H (\mbox{or}~H_0)$.
We denote $G(\gamma)$ by $dF(\gamma)$.
This derivative corresponds to the derivative which is defined
by a right-invariant vector field $X_h$ on 
$L_e(G)$.
The tangent space $T_{\gamma}L_e(G)$ is defined to be the set
of all continuous mappings $h$ from
$[0,1]$ to $TG$ with $h(t)\in T_{\gamma(t)}G$
and $(R_{\gamma(\cdot)})_{\ast}^{-1}h(\cdot)\in H_0$.
Here $R_ab=ba$ for $a,b\in G$.
Naturally, $T_{\gamma}L_e(G)$ can be identified with
$H_0$.
Therefore $\otimes^pT_{\gamma}^{\ast}L_e(G), 
\wedge^pT_{\gamma}^{\ast}L_e(G)$ can be identified with
$\otimes^pH_0^{\ast}, \wedge^pH_0^{\ast}$
respectively.
Accordingly, measurable covariant tensor fields, differential forms on
$L_e(G)$ are defined to be measurable maps from
$L_e(G)$ to $\otimes^pH_0^{\ast}, \wedge^pH_0^{\ast}$
respectively.
The set $L_e(G)$ is a Banach manifold and there is a natural definition
of the (co)tangent bundle.
In this paper, we do not use the structure
but use the derivative in the $H$-direction and
the notation $T^{\ast}L_e(G)$ should be understood in such a sense.

To define Sobolev spaces of tensors over $L_e(G)$,
we use the Levi-Civita covariant derivative
$\nabla$ which is defined using the right invariant Riemannian metric.
The covariant derivative $\nabla$ is a mapping on
the smooth cylindrical
tensor fields such that
$\nabla T\in \FC(\otimes^{p+1}T^{\ast}L_e(G))$ for
$T\in \FC(\otimes^pT^{\ast}L_e(G))$~$(p=0,1,2,\ldots)$.
The Sobolev space ${\mathbb D}^{k,q}(\otimes^pT^{\ast}L_e(G),\nu_e)$
~$(k\in {\mathbb N}\cup \{0\}, q\ge 1)$
is the completion of $\FC(\otimes^pT^{\ast}L_e(G))$
by the norm $\|~\|_{k,q}$ such that
$$
\|T\|_{k,q}=
\left(\sum_{i=0}^k\|\nabla^iT\|_{L^q(\nu_e)}^q\right)^{1/q}.
$$
Also we have
$\nabla$ maps ${\mathbb D}^{k,q}(\otimes^pT^{\ast}L_e(G),\nu_e)$
to ${\mathbb D}^{k-1,q}(\otimes^{p+1}T^{\ast}L_e(G),\nu_e)$.
Let $X_{h_1},X_{h_2}$ be the vector field corresponding to
$h_i\in H_0$.
Then an easy calculation shows that
$[X_{h_1},X_{h_2}]F:=X_{h_1}\left(X_{h_2}F\right)
-X_{h_2}\left(X_{h_1}F\right)$
is equal to $X_{[h_2,h_1]}F$
for any smooth cylindrical function $F$.
Here $[h_2,h_1](t):=[h_2(t),h_1(t)]$.
Thus the exterior differential operator
$d$ is well-defined.
We refer the reader to \cite{aida-sobolev, fang-franchi} 
for the notion of tensor fields, covariant derivatives and
Sobolev spaces on $L_e(G)$.
We introduce a submanifold which is isomorphic to
$L_e(G)$ by the solution of the stochastic differential equation
in the sense of Stratonovich
on $G$:
\begin{eqnarray}
dX(t,a,w)&=&(L_{X(t,a,w)})_{\ast}\circ dw_t,\label{sde}\\
X(0,a,w)&=&a\in G.\nonumber
\end{eqnarray}
Here $L_ab=ab$ for $a,b\in G$
and $w_t$ is the $d$-dimensional standard Brownian
motion on ${\mathbb R}^d\cong {\mathfrak g}$ whose starting point is $0$.
That is, $w=(w_t)\in W^d$.
We fix an $\infty$-quasi-continuous version of 
$X(t,e,w)$ which is defined on a subset $\Omega$ of $W^d$.
See Theorem~\ref{Omega} and Proposition~\ref{solution of sde}.
Let 
\begin{equation}
S=\left\{w\in \Omega~|~X(1,e,w)=e\right\}.\label{submanifold}
\end{equation}
There exists a probability measure $\mu_e$ on $S$ 
which is given by
$$
d\mu_e(w)=p(1,e,e)^{-1}\delta_e(X(1,e,w))d\mu(w)
$$
where $\delta_e(X(1,e,w))$ is a positive generalized 
Wiener function \cite{sugita}.
Note that $\mu_e$ has no mass on any Borel measurable subset
$A$ with $C^s_q(A)=0$, where $C^s_q$ denotes the
$(q,s)$-capacity of $A$ and $q$ (the parameter of integrability)
is any number which is greater than $1$
and $s$ (the parameter of differentiability) 
is a sufficiently large positive number
which depends on the dimension of $G$.
Recall that a function $f$ on $W^d$ is said to be
$(q,s)$-quasi-continuous if for any $\ep>0$, there exists
a Borel measurable subset $A_{\ep}$ of $W^d$ such that
$C^s_q(A_{\ep})<\ep$ and $f|_{A_{\ep}^c}$ is continuous 
with respect to the topology of $W^d$.
Hence, for sufficiently large $s$, 
$(q,s)$-quasi-continuous function is a
$\mu_e$-almost everywhere defined Borel
measurable function.
Also $f$ is said to be $\infty$-quasi-continuous when
$f$ is $(q,s)$-quasi-continuous for all $(q,s)$.
We refer the reader to \cite{sugita, malliavin,ikeda-watanabe}
for these notions and results.
It is well-known that $X_{\ast}\mu_e=\nu_e$.
In fact, the map $X : (S,\mu_e)\to (L_e(G),\nu_e)$
is isomorphism in the sense of
Proposition~\ref{isomorphism}.
The covariant 
derivative $\nabla_S$ and the exterior differential
operator $d_S$
is defined on $S$ using the $H$-derivative on $W^d$
as in finite dimensions.
These differential operators are defined on 
Sobolev spaces of covariant tensor fields ${\mathbb D}^{k,q}(\otimes^pT^{\ast}S)$
and the space of $p$-forms 
${\mathbb D}^{k,q}(\wedge^pT^{\ast}S)$.
We denote by $\|~\|_{k,q}$ the Sobolev norm.
See \cite{kazumi-shigekawa, aida-ou} for these notions.
Here we present a first main theorem which
shows that
any closed $1$-form is exact on $S$.

\begin{thm}\label{main theorem 1}
Let $G$ be a simply connected compact Lie group.
There exists a sequence of
$\infty$-quasi-continuous functions
$\rho_n\in {\mathbb D}^{\infty}(W^d)$~$(n\in {\mathbb N})$
for which the following statements hold.

\noindent
$(1)$~
For any $n, w$, $0\le \rho_n(w)\le 1$ holds.
Moreover for any $r>1$, $k\in {\mathbb N}$,\\
$\lim_{n\to\infty}C^k_r\left(\{w\in W^d~|~\rho_n(w)=1\}^c\right)=0$ and
$\lim_{n\to\infty}\|\rho_n-1\|_{r,k}=0$.

\noindent
$(2)$ Let $1<p<2$.
Let $\theta\in L^2(\wedge^1T^{\ast}S, d\mu_e)
\cap {\mathbb D}^{\infty,p}(\wedge^1T^{\ast}S,d\mu_e)$ 
and assume that
$d_S\theta=0$~$\mu_e-a.s.$ on $S$.
Let $1<q<p$ and $k$ be a sufficiently large positive integer.
Then there exist $f$ and $f_n$
which satisfy {\rm (i)-(v)} below.

\begin{itemize}
\item[{\rm (i)}]~The function $f$ is a $\mu_e$-almost everywhere defined
measurable function on $S$.
Also $f_n$ is a $(q,k)$-quasi-continuous function on $W^d$
and $f_n\in {\mathbb D}^{k,q}(W^d)$.
\item[{\rm (ii)}]
For any $n$, 
$f_n(w)=f(w)$~$\mu_e$-almost everywhere on
$\{\rho_n(w)\ne 0\}\cap S$ and 
$d_Sf_n$ is equal to
$\theta$ for $\mu_e$-almost all elements of
$\{\rho_n(w)\ne 0\}\cap S$.

\item[{\rm (iii)}]
Let $\eta\in {\mathbb D}^{\infty}(W^d)$
be an $\infty$-quasi-continuous function.
Then it holds that
$f\rho_n\eta\in L^1(S,\mu_e)$.

\item[{\rm (iv)}]
For any $n$ and $\infty$-quasi-continuous map
$\eta\in {\mathbb D}^{\infty}(W^d,H^{\ast})$,
\begin{eqnarray*}
\lefteqn{\int_{S}
f(w)\rho_n(w)
\left(-(d_S\rho_n(w),\eta(w))+
\rho_n(w)\widetilde{d_S^{\ast}\eta}(w)\right)d\mu_e(w)}\\
& &=\int_{S}
\left(\tilde{\theta}(w)\rho_n(w)+
f(w)d_S\rho_n(w),\rho_n(w)\eta(w)\right)d\mu_e(w),
\end{eqnarray*}
where 
$\widetilde{d_S^{\ast}\eta}$ is an 
$\infty$-quasi-continuous modification of
$d_S^{\ast}\eta$ and so on.

\item[{\rm (v)}]~
Let $K>0$ and $\psi_K$ be a smooth function on $\RR$
such that $\psi_K(u)=u$ $(|u|\le K)$,
$\psi_K(u)=-K-1$ $(u\le -K-1)$,
$\psi_K(u)=K+1$  $(u\ge K+1)$ and set
$f^K=\psi_K(f)$.
Then $f^K\in {\mathbb D}^{1,2}(S,\mu_e)$
and $d_Sf^K=\psi_K'(f)\theta$ holds.
\end{itemize}
\end{thm}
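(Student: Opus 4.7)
The overall strategy is to transfer the problem from $S$ (which lacks a usable Wiener-space coordinate structure) onto the tubular neighborhood ${\mathcal D}_\ep\subset\Omega$ built in Section 7, construct a global primitive there, and restrict back. Using the retraction $R_\ep:{\mathcal D}_\ep\to S$, I would form $\beta:=R_\ep^{\ast}\theta$, which is a closed $1$-form on ${\mathcal D}_\ep$ inheriting the $L^2\cap{\mathbb D}^{\infty,p}$-regularity of $\theta$. Since $R_\ep$ restricts to the identity on $S$, any primitive $F$ of $\beta$ on ${\mathcal D}_\ep$ will yield $f:=F|_S$ with $d_Sf=\theta$.

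The local step is the Poincar\'e lemma of Theorems~\ref{a vanishing theorem 0} and~\ref{a vanishing theorem}: on each rough-path ball $U_r(\varphi)$ a closed form admits an explicit primitive $F_\varphi$ constructed by integrating $\beta$ along a distinguished family of $H$-paths issuing from $\varphi$, with Sobolev control coming from the dimension-independent Poincar\'e constants of Claim~2 in Section 4. Section 5 then provides a countable cover ${\mathcal D}_\ep=\bigcup_i U_{r_i}(\varphi_i)$.

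For the global patching I would exploit $H$-simple connectedness. Since $G$ is simply connected, $\pi_1(L_e(G))=0$, and Section 6 upgrades this to $H$-connectedness and $H$-simple connectedness of ${\mathcal D}_\ep$: any piecewise-$H$ loop bounds an $H$-disk in ${\mathcal D}_\ep$. Combined with Stokes' theorem (Lemma~\ref{stokes}) applied to the exact $2$-form $d\beta=0$, this forces $\int_\gamma\beta=0$ on every such loop. Consequently the local primitives $F_{\varphi_i}$ differ only by additive constants on overlaps; a standard inductive adjustment over the countable cover produces a single $\mu$-a.e.\ defined measurable $F$ on ${\mathcal D}_\ep$ with $dF=\beta$ on each $U_{r_i}(\varphi_i)$. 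Setting $f:=F|_S$, well-defined $\mu_e$-a.e.\ by quasi-continuity of $F$, yields the function in~(i).

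For the cutoffs, I would choose $\rho_n\in\FC$ as smooth regularizations of the indicator of $U_{r_1}(\varphi_1)\cup\cdots\cup U_{r_n}(\varphi_n)$, designed so that $\{\rho_n=1\}$ exhausts ${\mathcal D}_\ep$ up to capacity, giving~(1). Define $f_n$ by multiplying the local Poincar\'e primitive by $\rho_n$ (matched across balls using $F$), which belongs to ${\mathbb D}^{k,q}(W^d)$ by the local Sobolev estimates. Then~(ii) follows from the construction, (iii) because $\mathrm{supp}\,\rho_n$ lies in finitely many $U_{r_i}(\varphi_i)$ on which $F$ is locally integrable, (iv) from ordinary integration-by-parts applied to the Sobolev function $f_n\rho_n\eta$ on $(S,\mu_e)$ using $f_n=f$ on $\{\rho_n\ne 0\}\cap S$, and (v) from the chain rule in Malliavin calculus (since $f^K$ is bounded and $\psi_K'(f)\theta\in L^2$). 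The principal obstacle is the global patching: one must verify that $\int_\gamma\beta=0$ on $H$-loops in ${\mathcal D}_\ep$ although $\beta$ is only defined up to a capacity-zero set, which requires careful use of $\infty$-quasi-continuous representatives of every object in sight and the rough-path form of Stokes' theorem to execute the classical loop-integral argument in this infinite-dimensional setting.
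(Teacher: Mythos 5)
Your overall strategy matches the paper's: pull $\theta$ back to a closed $1$-form on ${\cal D}_\ep$ via the retraction $\Psi_\ep$ (with a cutoff, so that $\gamma=T_{\chi_0,2\ep}\theta\in{\mathbb D}^{\infty,q}(W^d,H^{\ast})$), apply the rough-path Poincar\'e lemma (Theorems~\ref{a vanishing theorem 0}--\ref{a vanishing theorem}) on the balls $U_{r_i}(\varphi_i)$ of the countable cover from Lemma~\ref{covering}, patch the primitives using $H$-simple connectedness (Proposition~\ref{covering2}) and the $H$-direction Stokes theorem (Lemma~\ref{stokes}), build the cutoffs $\rho_n$ from rough-path Besov norms of iterated integrals, and push back to $S$ via $\iota^{\ast}$ (Proposition~\ref{retraction map}) to obtain (i)--(v). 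So the architecture is essentially the paper's.

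The genuine gap is in the patching. You call it ``a standard inductive adjustment'' and frame the obstacle as verifying $\int_\gamma\beta=0$ on $H$-loops, but that by itself is not enough: the local primitives $g_i$ are only $\mu$-a.e.\ defined, and the overlap $U_{r_{i+1}'}(\varphi_{i+1})\cap\bigcup_{j\le i}U_{r_j'}(\varphi_j)$ is not shown to be $H$-connected, so $d(g_{i+1}-G_i)=0$ on the overlap does not pin down a single additive constant across all of it. What closes this in the paper is Lemma~\ref{ergodicity}, the ergodicity of the Wiener measure under $H$-translations: one fixes $c_{i+1}$ on a small ball $U_{\delta}(\varphi)$ in the overlap and argues by contradiction that if the constants disagreed on a positive-measure set $B$, ergodicity produces $h\in H$ and $A\subset U_\delta(\varphi)$ of positive measure with $A+h\subset B$; then $H$-connectedness of each piece, $H$-simple connectedness of ${\cal D}_\ep$, and Lemma~\ref{stokes} force the increments of both primitives along the two $H$-paths from $A$ to $A+h$ to coincide a.e., contradicting the assumed discrepancy. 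Your proposal omits this ergodicity step, which is precisely what converts the homotopy/loop-integral idea into an a.e.\ statement on arbitrary positive-measure overlaps. A smaller slip: the $\rho_n$ cannot be chosen in $\FC$; they are built from $\|C(w^i,w^j)\|_{m,\theta}^m$, $\|w(N)^{\perp,i}\|_{m,\theta'/2}^m$, etc., hence are $\infty$-quasi-continuous elements of ${\mathbb D}^{\infty}(W^d)$ but not smooth cylindrical.
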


The theorem above says that
$f$ is differentiable and $d_Sf=\theta$ holds on $S$
in the theorem's sense.
The function $\rho_n$ can be chosen independent of
$\theta$ and
actually they can be given more explicitly
using the iterated integrals of the Brownian motion
$w$.
On $L_e(G)$, we can state a corresponding theorem
to the above
in a very simple form.

\begin{thm}\label{main theorem on loop}
Let $1<p<2$.
Let $\alpha\in 
L^2(\wedge^1T^{\ast}L_e(G),\nu_e)\cap 
{\mathbb D}^{\infty,p}(\wedge^1T^{\ast}L_e(G),\nu_e)$ and assume that
$d\alpha=0$ on $L_e(G)$.
Then there exists a measurable function $f$ on $L_e(G)$ such that
the following hold.

\noindent
$(1)$~Let $\psi_K$ be the function which is defined in 
Theorem~$\ref{main theorem 1}$.
Set $f^K=\psi_K(f)$.
Then $f^K\in {\mathbb D}^{1,2}(L_e(G),\nu_{e})$ and
$df^K=\psi_K'(f)\alpha$.

\noindent
$(2)$~For any $h\in H_0$ and $\ep\ge 0$, we have
\begin{equation}
f(e^{\ep h}\gamma)-f(\gamma)=\int_0^{\ep}
\left(\alpha(e^{sh}\gamma),h\right)ds
\qquad \mbox{$\nu_e$-almost all $\gamma$}.
\end{equation}

\noindent
$(3)$~For any $h\in H_0$ and $q<p$,
\begin{equation}
\lim_{\ep\to 0}\left\|
\frac{f(e^{\ep h}\gamma)-f(\gamma)}{\ep}-
\left(\alpha(\gamma),h \right)\right\|_{L^q(L_e(G),\nu_e)}=0.
\end{equation}
\end{thm}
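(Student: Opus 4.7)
The plan is to transfer the problem from $L_e(G)$ to the ``submanifold'' $S \subset W^d$ via the measure-preserving isomorphism $X : (S,\mu_e) \to (L_e(G),\nu_e)$ of Proposition~\ref{isomorphism}, apply Theorem~\ref{main theorem 1}, and then push the conclusion back to $L_e(G)$.

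First I would pull back $\alpha$ along $X$ to obtain a $1$-form $\theta := X^{\ast}\alpha$ on $S$. By the correspondence between Sobolev spaces on $S$ and on $L_e(G)$ developed in Section~7, $\theta$ lies in $L^2(\wedge^1 T^{\ast}S,\mu_e) \cap {\mathbb D}^{\infty,p}(\wedge^1 T^{\ast}S,\mu_e)$, and $d_S\theta = 0$ $\mu_e$-a.s.\ because pull-back commutes with exterior differentiation. Theorem~\ref{main theorem 1} then furnishes a $\mu_e$-a.e.\ defined measurable function $f_S$ on $S$ satisfying (i)--(v) with its approximating sequence $\rho_n$. Since $X_{\ast}\mu_e = \nu_e$, I define $f$ on $L_e(G)$ as the unique $\nu_e$-a.e.\ defined measurable function with $f \circ X = f_S$ $\mu_e$-a.s.

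For (1): property (v) of Theorem~\ref{main theorem 1} gives $f_S^K := \psi_K(f_S) \in {\mathbb D}^{1,2}(S,\mu_e)$ with $d_S f_S^K = \psi_K'(f_S)\theta$, and transporting by the Sobolev isomorphism $X$ yields $f^K \in {\mathbb D}^{1,2}(L_e(G),\nu_e)$ with $df^K = \psi_K'(f)\alpha$. For (2), I fix $h \in H_0$ and apply Driver's quasi-invariance of $\nu_e$ under the right-invariant flow $\gamma \mapsto e^{sh}\gamma$ together with $f^K \in {\mathbb D}^{1,2}$ and $df^K = \psi_K'(f)\alpha$; the usual fundamental-theorem-of-calculus identity along $H_0$-flows for Sobolev functions gives
\[
f^K(e^{\ep h}\gamma) - f^K(\gamma) = \int_0^{\ep} \psi_K'(f(e^{sh}\gamma))\,(\alpha(e^{sh}\gamma),h)\,ds \quad \text{for $\nu_e$-a.a.\ $\gamma$.}
\]
Letting $K_m \to \infty$, using that $\{|f| \le K_m\}$ exhausts $L_e(G)$ in $\nu_e$-measure and that (by quasi-invariance combined with Fubini on $[0,\ep]\times L_e(G)$) for $\nu_e$-a.a.\ $\gamma$ the entire flow line $\{e^{sh}\gamma:s\in[0,\ep]\}$ stays in $\{|f|\le K_m\}$ for large $m$, yields (2). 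For (3), the $L^q$-convergence of $\ep^{-1}\int_0^{\ep}(\alpha(e^{sh}\gamma),h)\,ds$ to $(\alpha(\gamma),h)$ follows from continuity of $L^q$-translation along the $H_0$-flow, which is standard from $\alpha \in {\mathbb D}^{\infty,p}$ with $q<p$ and the Cameron--Martin type density estimates of Driver.

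The main obstacle I anticipate is the passage from the cutoff-laden conclusion of Theorem~\ref{main theorem 1}, where $f_S$ is defined only $\mu_e$-a.e.\ and the identity $d_S f_n = \theta$ holds only on $\{\rho_n \ne 0\}$, to the clean global identity (2) on $L_e(G)$. One must (a) patch the $f_n$ into a single $\nu_e$-a.e.\ defined $f$ with a well-defined $\psi_K(f)$ lying in ${\mathbb D}^{1,2}$ globally with the correct derivative, using the capacity convergence in (i); (b) use Driver's quasi-invariance to ensure that the $\nu_e$-null sets where the local identities fail do not sweep out positive measure along the flow $s\mapsto e^{sh}\gamma$; and (c) exploit the quasi-continuous representatives to make sense of pointwise-along-the-flow identities. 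Once these measure-theoretic subtleties are handled, (2) and (3) follow essentially mechanically.
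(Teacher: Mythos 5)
Your proposal is correct and follows essentially the same route as the paper: pull $\alpha$ back to $S$ via $X^{\ast}$, apply Theorem~\ref{main theorem 1}, transport $f$ back through the Sobolev isomorphism of Proposition~\ref{isomorphism} to get (1), then obtain (2) from the fundamental-theorem-of-calculus identity along the $H_0$-flow for $f^K$ (the paper cites Lemma~14 of \cite{aida-irred} for this) and let $K\to\infty$, with (3) following from (2). The measure-theoretic care you flag about flow lines staying in $\{|f|\le K_m\}$ is exactly the content the paper delegates to that cited lemma.
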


Using the above results,
we have a vanishing theorem for
the Hodge-Kodaira operator acting on
$1$-forms.
First we give the definition of the Hodge-Kodaira
operator.

\begin{dfi}
Let $d$ be the exterior differential operator
acting on $1$-forms on $L_e(G)$.
Let $d^{\ast}$ be the adjoint operator of
$d$.
We consider the closable form on $L^2(\wedge^1T^{\ast}L_e(G),\nu_e)$.
\begin{equation*}
{\cal E}(\alpha,\alpha)=
\left(d\alpha,d\alpha\right)_{L^2(\wedge^2T^{\ast}L_e(G))}+
\left(d^{\ast}\alpha,d^{\ast}\alpha\right)_{L^2(L_e(G))},
\end{equation*}
which is defined on $\FC(\wedge^1T^{\ast}L_e(G))$.
The Hodge-Kodaira operator $\square$ acting on $1$-forms is the
non-negative generator of the closed form of
the closure of the above.
\end{dfi}

We note that $\left(dd^{\ast}+d^{\ast}d, \FC(\wedge^1T^{\ast}L_e(G))\right)$
is essentially self-adjoint.
See \cite{shigekawa2}.
The statement in \cite{shigekawa2} is concerning 
Hodge-Kodaira operators on submanifolds in Wiener spaces.
However it can be applied to the case of $L_e(G)$ noting
Proposition~\ref{isomorphism}.
The following is our vanishing theorem.

\begin{thm}\label{main theorem 2}
Let $G$ be a simply connected compact Lie group.
Then
$\ker \square=\{0\}$.
Also it holds that
\begin{equation}
L^2(\wedge^1T^{\ast}L_e(G))=
\overline{\left\{df~|~f\in \FC(L_e(G))\right\}}\oplus
\overline{\left\{
d^{\ast}\alpha~|~\alpha\in \FC(\wedge^2T^{\ast}L_e(G))\right\}}.
\label{decomposition}
\end{equation}
\end{thm}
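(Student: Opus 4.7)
The plan is to reduce Theorem~\ref{main theorem 2} to Theorem~\ref{main theorem on loop} together with the hypoellipticity of the Bochner Laplacian on $L_e(G)$ and the essential self-adjointness of $(dd^{\ast}+d^{\ast}d,\FC(\wedge^1T^{\ast}L_e(G)))$ from \cite{shigekawa2}. Let $\alpha\in\ker\square$. Since $\square$ is the generator of the closure of $\mathcal{E}$, one has $\alpha\in\Dom(d)\cap\Dom(d^{\ast})$ with $d\alpha=0$ and $d^{\ast}\alpha=0$ in the $L^2$ sense. The first step is a regularity bootstrap: writing $\square=\nabla^{\ast}\nabla+R$ via a Weitzenböck-type identity on $L_e(G)$, where $R$ is a bounded zeroth-order curvature correction (the right invariant connection on $G$ itself has vanishing curvature, so the Ricci contribution on $L_e(G)$ arises only from the pinning and is controllable), the hypoellipticity of the Bochner Laplacian propagates $L^2$-regularity of $\nabla^{\ast}\nabla\alpha=-R\alpha$ into Sobolev regularity of $\alpha$, and iterating yields $\alpha\in\mathbb{D}^{\infty,p}(\wedge^1T^{\ast}L_e(G),\nu_e)$ for every $p>1$. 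In particular, the equality $d\alpha=0$ then holds in the pointwise $H$-derivative sense, so $\alpha$ satisfies the hypotheses of Theorem~\ref{main theorem on loop} for some $p\in(1,2)$.

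Apply Theorem~\ref{main theorem on loop} to obtain a measurable function $f$ on $L_e(G)$ such that, for every $K>0$, $f^K=\psi_K(f)\in\mathbb{D}^{1,2}(L_e(G),\nu_e)$ and $df^K=\psi_K'(f)\alpha$. Since $\alpha\in\Dom(d^{\ast})$ with $d^{\ast}\alpha=0$ and $f^K\in\Dom(d)$, the duality pairing gives
\begin{equation*}
\int_{L_e(G)}\psi_K'(f)|\alpha|^2\,d\nu_e
=\bigl(df^K,\alpha\bigr)_{L^2(\wedge^1T^{\ast}L_e(G))}
=\bigl(f^K,d^{\ast}\alpha\bigr)_{L^2(L_e(G))}=0.
\end{equation*}
Because $\psi_K$ is constructed to be monotone on the transition intervals, $\psi_K'\ge 0$ everywhere, and $\psi_K'\equiv 1$ on $[-K,K]$. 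Thus $\alpha=0$ $\nu_e$-a.e.\ on $\{|f|\le K\}$; letting $K\to\infty$ and using that $\{|f|<\infty\}$ has full $\nu_e$-measure (since $f$ is $\nu_e$-a.e.\ defined) gives $\alpha=0$, which proves $\ker\square=\{0\}$.

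For the decomposition (\ref{decomposition}), the relation $d^2=0$ gives $\mathrm{Range}\,d\perp\mathrm{Range}\,d^{\ast}$, and the general Hilbert space identities $(\overline{\mathrm{Range}\,d})^{\perp}=\ker d^{\ast}$ and $(\overline{\mathrm{Range}\,d^{\ast}})^{\perp}=\ker d$ yield
\[
L^2(\wedge^1T^{\ast}L_e(G))=\overline{\mathrm{Range}\,d}\oplus\overline{\mathrm{Range}\,d^{\ast}}\oplus\bigl(\ker d\cap\ker d^{\ast}\bigr),
\]
where the third summand is precisely $\ker\square$, which has just been shown to be trivial. Essential self-adjointness of $dd^{\ast}+d^{\ast}d$ on the core $\FC(\wedge^1T^{\ast}L_e(G))$ (transferred from the $S$-picture via Proposition~\ref{isomorphism}) implies that $\FC$ is a form core, so $\overline{\mathrm{Range}\,d}=\overline{\{df:f\in\FC(L_e(G))\}}$ and $\overline{\mathrm{Range}\,d^{\ast}}=\overline{\{d^{\ast}\alpha:\alpha\in\FC(\wedge^2T^{\ast}L_e(G))\}}$, completing the proof. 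The main obstacle I foresee is the regularity step: in infinite dimensions one must carefully verify that the Weitzenböck remainder on $L_e(G)$ is a bounded perturbation of the Bochner Laplacian and that iteration truly lands $\alpha$ in $\mathbb{D}^{\infty,p}$ for some $p<2$, since Theorem~\ref{main theorem on loop} strictly requires this Sobolev threshold.
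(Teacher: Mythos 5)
Your overall strategy mirrors the paper's exactly: a Weitzenb\"ock-based regularity bootstrap to land $\alpha$ in $\cap_{1<p<2}\mathbb{D}^{\infty,p}(\wedge^1T^{\ast}L_e(G))$, an application of Theorem~\ref{main theorem on loop} to produce $f$ and $f^K$, the integration by parts $\int\psi_K'(f)|\alpha|^2\,d\nu_e=(df^K,\alpha)=(f^K,d^{\ast}\alpha)=0$, and the Hilbert-space decomposition via essential self-adjointness of $(\square,\FC(\wedge^1T^{\ast}L_e(G)))$. The decomposition argument and the vanishing argument are correct as written.

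There is, however, a genuine gap in the bootstrap step, and it is exactly the one you flag as ``the main obstacle'' at the end without resolving. You assert that the Weitzenb\"ock remainder $R$ is a \emph{bounded} zeroth-order operator; this is false. The identity proved in Lemma~\ref{final lemma2} reads $\square=\nabla^{\ast}_{\nu_e}\nabla+I+T_{b(1)}+T_2+T_3$, where $T_{b(1)}$ is a multiplication-type operator built from the anti-development $b(1,\gamma)=\int_0^1(R_{\gamma_s})^{-1}_{\ast}\circ d\gamma_s$. This is a stochastic integral and lies in $\cap_{p>1}L^p(L_e(G),\nu_e)$ but \emph{not} in $L^{\infty}$, so $T_{b(1)}$ is an unbounded perturbation. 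Consequently $\nabla^{\ast}_{\nu_e}\nabla\alpha=(\square-I-T_{b(1)}-T_2-T_3)\alpha$ lies only in $\cap_{1<p<2}L^p$ (since $L^q\cdot L^2\subset L^p$ for $\tfrac1p=\tfrac1q+\tfrac12$ with $q>1$), not in $L^2$. The elliptic estimate (Theorem~2.16 of \cite{aida-sobolev}) then gives $\alpha\in\cap_{1<p<2}\mathbb{D}^{2,p}$, and iteration gives $\alpha\in\cap_{1<p<2}\mathbb{D}^{\infty,p}$ --- precisely the regularity class demanded by Theorem~\ref{main theorem on loop}, and precisely the reason that theorem is formulated with the threshold $p<2$ rather than $p=2$. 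Without replacing your boundedness claim with this $L^p$ argument, the bootstrap as stated is not justified (it would, if the claim were true, overshoot to $\mathbb{D}^{\infty,2}$ and make the $p<2$ restriction in Theorem~\ref{main theorem on loop} mysterious).
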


\bigskip

Finally, we make further remarks.

\noindent
(1)~
As noted in the introduction,
there are some difficulties to define a de Rham complex of
differential forms in the 
Sobolev space category on the general path spaces $P_x(M)$, $L_x(M)$.
However, we can define them on submanifolds in Wiener spaces.
See \cite{kazumi-shigekawa, kusuoka, aida-ou, am1}.
The proof in this paper can be applied to prove the vanishing 
of the $1$-dimensional $L^2$ cohomology of 
the submanifold which is isomorphic to $L_x(M)$ in the case
where $\pi_2(M)=0$ which is equivalent to $\pi_1(L_x(M))=0$.

\bigskip

\noindent
(2)~
We mention the works of Kusuoka in the introduction.
We explain Kusuoka's results.
Kusuoka defined a local Sobolev spaces
$
{\mathcal D}^{\infty,q}_{loc}(U,d\mu)
$
where $U$ is a subset of $W^d$ and
$q$ is the index of the integrability.
Based on these Sobolev spaces and several results on
the capacity which he introduced, Kusuoka announced the following
theorems in \cite{kusuoka}.
Let $M$ be a compact Riemannian manifold which is isometrically embedded
in $\RR^d$.
Let $P(x) : \RR^d\to T_xM$ be the projection operator and
consider a stochastic differential equation:
\begin{eqnarray*}
dX(t,x,w)&=&P(X(t,x,w))\circ dw_t,\\
X(0,x,w)&=&x\in M.
\end{eqnarray*}
There exists a probability measure 
$d\mu_x=p(1,x,x)^{-1}\delta_x(X(1,x,w))d\mu$ on the submanifold:
$$
S=\{w\in W^d~|~X(1,x,w)=x\}\subset W^d.
$$
Kusuoka proved that

\begin{thm}\label{kusuoka1}
There exists an isomorphism:
\begin{eqnarray*}
\Bigl\{\alpha\in {\mathcal D}_{loc}^{\infty,q}
(\wedge^pT^{\ast}S)~|~d_S\alpha=0\Bigr\}
/\Bigl\{d_S\beta~|~{\mathcal D}_{loc}^{\infty,q}
(\wedge^{p-1}T^{\ast}S)\Bigr\}
&\simeq& H^p({\cal M}_x,{\mathbb R}),
\end{eqnarray*}
where 
\begin{eqnarray*}
{\cal M}_x&=&\Bigl\{h\in H~|~\xi(1,x,h)=x, 
\mbox{where $\xi(t,x,h)$ is the solution to}\\
& &\dot{\xi}(t,x,h)=P(\xi(t,x,h))\dot{h}(t), \xi(0,x,h)=x,
~t\ge 0
\Bigr\}
\end{eqnarray*}
and $H^p({\cal M}_x,\RR)$ is the de Rham cohomology of
${\cal M}_x$.
\end{thm}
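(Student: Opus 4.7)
The plan is to mimic the classical Čech--de Rham argument in infinite dimensions: cover $S$ by ball-like subsets on which a Poincar\'e lemma holds, identify the resulting cohomology with the Čech cohomology of the nerve of the cover, and finally match the homotopy type of that nerve with $\mathcal{M}_x$ via the $H$-retraction developed in Section~7.

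First I would extend the Poincar\'e lemma of Theorem~\ref{a vanishing theorem 0} and Theorem~\ref{a vanishing theorem} from $1$-forms to arbitrary $p$-forms on the ball-like sets $U_r(\varphi)$. The approach is the same as in Section~4: approximate by cylindrical forms on the finite-dimensional projections $U_{r,\varphi}^{(N)}$, which are $H$-star-shaped domains in $\RR^N$, apply the finite-dimensional Poincar\'e lemma with an explicit integral homotopy operator, and exploit the dimension-independent Poincar\'e constant to pass to the limit in $L^q_{\mathrm{loc}}$. Local integrability of the cylindrical primitives places the result in $\mathcal{D}_{\mathrm{loc}}^{\infty,q}$.

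Next I would construct a countable good cover $\{U_{r_i}(\varphi_i)\}$ of $S$ (or of the tubular neighborhood $\mathcal{D}_{\ep}$ pulled back to $S$ by the retraction of Section~7), refined so that every nonempty finite intersection is itself ball-like with a common $H$-center, and equipped with a subordinated partition of unity assembled from cutoffs of the type $\rho_n$ from Theorem~\ref{main theorem 1}. With these ingredients the Čech--de Rham double complex is well defined on local Sobolev sections; exactness of its rows (via the extended Poincar\'e lemma) and of its columns (via the partition of unity, which gives acyclicity of the Čech complex of the constant presheaf $\RR$) identifies the cohomology on the left-hand side with $\check{H}^p(\mathcal{N},\RR)$, where $\mathcal{N}$ is the nerve. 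Finally, the retraction of $\mathcal{D}_{\ep}$ onto $S$, restricted to the Cameron--Martin subspace, yields a continuous retraction of a neighborhood of $\mathcal{M}_x$ in $H$ onto $\mathcal{M}_x$; after refining the cover so that each $U_{r_i}(\varphi_i)\cap\mathcal{M}_x$ is contractible in $\mathcal{M}_x$, the nerve theorem gives $\mathcal{N}\simeq\mathcal{M}_x$, hence $\check{H}^p(\mathcal{N},\RR)\cong H^p(\mathcal{M}_x,\RR)$.

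The main obstacle is the last step: one must verify that $H$-contractibility of $U_r(\varphi)\cap\mathcal{M}_x$ (which is defined in terms of rough-path geometry) genuinely reflects the classical contractibility of the corresponding piece of $\mathcal{M}_x$, and that the rough-path-continuous Itô map $w\mapsto X(\cdot,e,w)$ transports the good cover of $S$ to a good cover of $\mathcal{M}_x$ when restricted to Cameron--Martin paths. A secondary technical difficulty is that the target space is $\mathcal{D}^{\infty,q}_{\mathrm{loc}}$ rather than a global Sobolev space, so one must check that the truncations $\rho_n$ used in the partition of unity commute with $d_S$ up to exact error terms supported outside the cover; this last point is exactly the content of Theorem~\ref{main theorem 1}(iv).
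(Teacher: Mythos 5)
This theorem is not proved in the paper: it is stated as \emph{Kusuoka's} result, announced in \cite{kusuoka}, and is quoted only as background in the remarks of Section~2. The author explicitly declines to prove it, writing that ``combining the usage of the \v{C}ech cohomology, we may prove the isomorphism between $H_1(H^1\cap L_x(M),\RR)$ and $\ker\square|_1$ based on our proof. However we do not pursue this direction in this paper.'' So there is no ``paper's proof'' to compare against; your proposal is, at best, a reconstruction of what a proof of Kusuoka's announcement might look like, not a route that appears anywhere in this paper.

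Taken on its own terms, your sketch is the natural \v{C}ech--de Rham plan, and it is indeed the kind of argument the author gestures toward. But it overstates what is available in the paper: Theorems~\ref{a vanishing theorem 0} and \ref{a vanishing theorem} are proved only for $1$-forms, and extending them to $p$-forms is nontrivial (the finite-dimensional homotopy operator of Lemma~\ref{retraction formula} is written only for $1$-forms, and the crucial dimension-independent Poincar\'e constant argument would have to be redone for the full double complex). Moreover the target spaces $\mathcal{D}^{\infty,q}_{\mathrm{loc}}$ appearing in the statement are Kusuoka's local Sobolev spaces, which this paper does not define or work in; the paper's Poincar\'e lemmas land in global $\mathbb{D}^{\infty,q}(W^d)$ after multiplying by cutoffs. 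The hardest and least addressed step is your last one: establishing that the nerve of an $H$-open rough-path cover of $S$ has the homotopy type of $\mathcal{M}_x$. The paper only proves $H$-connectedness and $H$-simple-connectedness of $\mathcal{D}_\ep$ (Proposition~\ref{covering2}), which suffices for $p=1$; it does not establish the higher acyclicity properties of finite intersections that a nerve theorem would require. In short, the ingredients you cite from the paper genuinely support the $p=0,1$ cases (which is exactly what the paper claims and proves via Theorems~\ref{main theorem 1}--\ref{main theorem 2}), but the leap to all $p$ and to the stated isomorphism with $H^p(\mathcal{M}_x,\RR)$ is the content of Kusuoka's separate work, not something this paper provides or that follows from its lemmas without substantial new input.
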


The subset ${\cal M}_x$ is a Hilbert manifold in usual sense.
Let $H^1\cap L_x(M)$ be the subset of $H^1$-paths of
$L_x(M)$.
Noting that $H^1\cap L_x(M)$ and ${\cal M}_x$ is $C^{\infty}$-homotopy
equivalent, the conclusion of Theorem~\ref{kusuoka1}
is natural.
Let $\square=d_S^{\ast}d_S+d_sd_S^{\ast}$
and $\square|_p$ be the restriction on $p$-forms.
They are defined as the Friedrichs extension of
them on some cores.
Another Kusuoka's result is as follows.

\begin{thm}\label{kusuoka2}
There exists a mapping $j_p : \ker\square|_p\to H^p({\cal M}_x,{\mathbb R})$
such that

$(1)$~$j_p$ is surjective for $p=0,1,2,\ldots$.

$(2)$~$j_p$ is injective for $p=0,1$.
\end{thm}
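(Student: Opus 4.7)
The plan is to define $j_p$ by regarding each $L^2$-harmonic form as a closed form in the local Sobolev category, and then passing to the quotient already identified with de Rham cohomology by Theorem~\ref{kusuoka1}.

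\textbf{Construction of $j_p$.} For $\alpha\in\ker\square|_p$, the quadratic form identity $(\square\alpha,\alpha)_{L^2}=\|d_S\alpha\|_{L^2}^2+\|d_S^{\ast}\alpha\|_{L^2}^2=0$ yields $d_S\alpha=0$ and $d_S^{\ast}\alpha=0$. The next step is to upgrade regularity: the Hodge-Kodaira operator on $S$ equals the Bochner Laplacian plus a bounded zero-order curvature term, so hypoellipticity of the Ornstein-Uhlenbeck operator combined with Kusuoka's local capacity theory shows $\alpha\in\mathcal{D}_{loc}^{\infty,q}(\wedge^pT^{\ast}S)$. Being closed and living in the local Sobolev class, $\alpha$ represents a class in the quotient appearing on the left-hand side of Theorem~\ref{kusuoka1}, and I set $j_p(\alpha)$ to be its image in $H^p(\mathcal{M}_x,\mathbb{R})$.

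\textbf{Surjectivity.} Given $[\omega]\in H^p(\mathcal{M}_x,\mathbb{R})$, Theorem~\ref{kusuoka1} supplies a representative $\beta\in\mathcal{D}_{loc}^{\infty,q}$ with $d_S\beta=0$. I would approximate $\beta$ by closed $L^2$-forms $\beta_n$ built from cutoffs analogous to the $\rho_n$ of Theorem~\ref{main theorem 1}, correcting the loss of closedness by an exact form $d_S\delta_n$ coming from a Poincar\'e lemma on ball-like rough-path neighborhoods, so that $[\beta_n]=[\beta]$ in Sobolev cohomology. Applying the orthogonal projection $P:L^2\to\ker\square|_p$ from the spectral theory of the closed form associated with $\square$, the restriction of $\square^{-1}$ to $(\ker\square)^{\perp}$ together with $d_S\beta_n=0$ gives $\beta_n-P\beta_n=d_S\gamma_n$ in $L^2$. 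Passing to the limit in the local Sobolev topology, $P\beta_n\to\alpha\in\ker\square|_p$ with $[\alpha]=[\beta]$, hence $j_p(\alpha)=[\omega]$.

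\textbf{Injectivity for $p=0,1$.} When $p=0$, $\ker\square|_0$ consists of $f\in L^2$ with $\Delta_Sf=0$; integration by parts gives $d_Sf=0$, so $f$ is locally constant in a quasi-sure sense, and $j_0(f)=0$ forces $f=0$. When $p=1$, suppose $\alpha\in\ker\square|_1$ with $j_1(\alpha)=0$, so $\alpha=d_S\gamma$ for some $\gamma\in\mathcal{D}_{loc}^{\infty,q}$. Using the product rule $\rho_n^2\alpha=d_S(\rho_n^2\gamma)-2\rho_nd_S\rho_n\cdot\gamma$ and $d_S^{\ast}\alpha=0$, one obtains
\[
(\rho_n^2\alpha,\alpha)_{L^2}=(d_S(\rho_n^2\gamma),\alpha)_{L^2}-2(\rho_nd_S\rho_n\cdot\gamma,\alpha)_{L^2}=-2(\rho_nd_S\rho_n\cdot\gamma,\alpha)_{L^2}.
\]
The left-hand side tends to $\|\alpha\|_{L^2}^2$; the right-hand side tends to $0$ because $d_S\rho_n$ is supported on a set of vanishing capacity and the pairing with $\gamma$ and $\alpha$ is controlled by the Sobolev bounds on $\rho_n$ from Theorem~\ref{main theorem 1}(1). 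Hence $\alpha=0$.

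The hardest step is surjectivity at arbitrary $p$: the representative $\beta\in\mathcal{D}_{loc}^{\infty,q}$ is only locally $L^2$, so the classical finite-dimensional Hodge decomposition is unavailable, and one must coordinate the local-to-global cutoff machinery, the Poincar\'e lemma on rough-path balls, and the spectral projection of $\square$ consistently at infinity. This coordination is the genuine infinite-dimensional obstruction that distinguishes Kusuoka's Hodge theory on $S$ from the classical finite-dimensional picture, and it is exactly the type of difficulty that the present paper's treatment of $1$-forms on $L_e(G)$ (Theorems~\ref{main theorem 1} and \ref{main theorem on loop}) illustrates in the special case $p=1$.
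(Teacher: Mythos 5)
First, a structural point: the paper does not prove this statement at all. Theorem~\ref{kusuoka2} is quoted as a result announced by Kusuoka in \cite{kusuoka} (see also \cite{kusuoka91,kusuoka92}); the author explicitly says only that the present paper's methods give ``another proof to some special cases of his results'' and that the isomorphism between $H_1(H^1\cap L_x(M),\RR)$ and $\ker\square|_1$ ``is not pursued in this paper.'' So there is no in-paper proof to compare with, and your task was in effect to reconstruct Kusuoka's argument, which is a substantial piece of work in its own right.

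Judged on its own terms, your proposal has genuine gaps. In the construction step, the claim that $\square$ on $S$ is the Bochner Laplacian plus a \emph{bounded} zero-order term is false: even in the loop-group case of this paper the Weitzenb\"ock remainder involves $T_{b(1)}$ with $b(1)$ only in $\cap_{p>1}L^p$, not bounded, and for a general compact $M$ the curvature terms are worse; the regularity upgrade to ${\mathcal D}^{\infty,q}_{loc}$ therefore needs a real argument (the paper's loop-group analogue uses Theorem~2.16 of \cite{aida-sobolev} and only reaches $p<2$). The surjectivity step is the heart of Kusuoka's theorem and your sketch does not prove it: the identity $\beta_n-P\beta_n=d_S\gamma_n$ presupposes a Hodge-type decomposition in which every closed $L^2$-form is harmonic plus \emph{exact} (not merely a limit of exact forms), i.e.\ a closed-range statement that is exactly what is unavailable in infinite dimensions; moreover the passage $P\beta_n\to\alpha$ with preservation of the cohomology class in ${\mathcal D}^{\infty,q}_{loc}$ is asserted without any compactness or estimate. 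Your own closing paragraph concedes that this coordination is the real obstruction, which is another way of saying the step is missing. Finally, in the injectivity argument for $p=1$ the error term is not controlled by the reason you give: $d_S\rho_n$ is not supported on a set of vanishing capacity; rather the complement of $\{\rho_n=1\}$ has capacity tending to $0$ as $n\to\infty$, and to kill $(\rho_n d_S\rho_n\cdot\gamma,\alpha)_{L^2}$ you need quantitative integrability of the primitive $\gamma$ against $d_S\rho_n$, which a merely local Sobolev bound does not supply. This is precisely the difficulty the present paper circumvents in its special case by truncating the primitive ($f^K=\psi_K(f)$, giving $df^K=\psi_K'(f)\alpha$ with $f^K$ bounded) before integrating by parts against $d^{\ast}\alpha=0$; some device of that kind is needed in your argument as well.
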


Therefore our results give another proof to some special cases
of his results.
We may prove a vanishing theorem on
a ``contractible domain'' of $S$ using the method in our paper.
Moreover, combining the usage of the ${\check {\rm C}}$ech cohomology,
we may prove the isomorphism between 
$H_1(H^1\cap L_x(M), \RR)$ and $\ker \square|_1$ based on our proof.
However we do not pursue this direction in this paper.

\section{Preliminary from rough path analysis}

The solutions of It\^o's stochastic differential equations
are measurable functions on $W^d$, but, they
are not continuous in the uniform convergence topology of $W^d$ 
in general.
The reason of the discontinuity is clarified by the rough path
analysis \cite{lyons98, lq, fv2}. 
In rough path analysis, we need to consider objects
which consist of the path and the iterated integrals.
To explain the iterated integrals, 
we take two continuous paths 
$x=x_t=(x^1_t,\ldots,x^d_t)$,
$y=y_t=(y^1_t,\ldots,y^d_t)$~$(0\le t\le 1)$ on $\RR^d$.
Suppose that $x$ or $y$ is a bounded variation path.
Then we can define for $0\le s\le t\le 1$
\begin{eqnarray}
C(x,y)_{s,t}&=&\int_s^t(x_u-x_s)\otimes dy_u\nonumber\\
&=&\sum_{1\le i,j\le d}\left(\int_s^t(x^i_u-x^i_s)dy^j_u\right)
e_i\otimes e_j\in \RR^d\otimes \RR^d
\label{bdd-cont-2nd-level}
\end{eqnarray}
as a Stieltjes integral.
Here $e_i={}^t(0,\ldots,\stackrel{i}{1},\ldots,0)$.
We introduce a function spaces for these iterated integrals.
Let $\Delta=\{(s,t)\in \RR^2~|~0\le s\le t\le 1\}$.
Let $V$ be a normed linear space.
For a Borel measurable mapping $\phi : \Delta\to V$, define
\begin{eqnarray*}
\|\phi\|_{m,\theta}&=&\left\{\int_0^1\int_0^t
\frac{|\phi(s,t)|^m}{(t-s)^{2+m\theta}}dsdt\right\}^{1/m},
\end{eqnarray*}
where, $m$ is a positive even integer and $0<\theta<1$. 
We denote the set of all measurable mappings $\phi$ from $\Delta$ to
$V$ satisfying $\|\phi\|_{m,\theta}<\infty$ by
$L_{m,\theta}(\Delta\to V)$.
Also we define $W_{m,\theta}(\Delta\to V)=L_{m,\theta}(\Delta\to V)\cap
C(\Delta\to V)$, where
$C(\Delta\to V)$ is the set of all continuous mappings 
from $\Delta$ to $V$.
Note that $L_{m,\theta}(\Delta\to V)$ is a separable Banach space.
Also for a measurable mapping $\phi : \Delta\to V$,
define
$$
\|\phi\|_{H,\theta}=\sup_{0\le s<t\le 1}\frac{|\phi(s,t)|}{|t-s|^{\theta}}.
$$
For $w\in W^d$, define $\bar{w}_{s,t}=w_t-w_s$~$((s,t)\in \Delta)$.
We denote by $W_{m,\theta}(\RR^d)$ all $w\in W^d$ with
$\|\bar{w}\|_{m,\theta}<\infty$.
We write $\|w\|_{m,\theta}$ instead of $\|\bar{w}\|_{m,\theta}$.
Note that the H\"older norm 
$\|w\|_{H,\theta}:=\|\bar{w}\|_{H,\theta}$
is weaker than the norm of $\|~\|_{m,\theta}$ by 
a result of \cite{grr}.
However this kind of statement does not hold for
general $\phi\in W_{m,\theta}(\Delta\to V)$ without additional
assumptions.
See Lemma~\ref{Hoelder and Besov}.
Let $M_{m,\theta}=\sup_{x\ne 0, x\in W_{m,\theta/2}({\mathbb R})}
\frac{\|x\|_{H,\theta/2}}{\|x\|_{m,\theta/2}}$.
Wiener measure $\mu$ satisfies that
$\mu(W_{m,\theta/2}({\mathbb R}^d))=1$ for all $0<\theta<1$.
Note that $W_{m,\theta}(\RR^d)$ is a separable 
Banach space.
If $x$ and $y$ are Lipschitz continuous paths, then $C(x,y)\in 
W_{m,\theta}(\Delta\to \RR^d\otimes \RR^d)$ for all $(m,\theta)$
with $m(1-\theta)>2$.
See Lemma~\ref{pointwise estimate}.

Let $w=w_t=(w^1_t,\ldots,w^d_t)\in W^d$
and 
$w(N)_t$ be the dyadic polygonal approximation of $w$.
Namely, $w(N)_t=w_t$ for $t=\frac{k}{2^N}$~$(k=0,1,\ldots,2^N)$
and $t\mapsto w(N)_t$~$(\frac{k}{2^N}\le t\le \frac{k+1}{2^N},
0\le k\le 2^N-1)$ are linear functions.
Also let
$w(N)^i=(w(N),e_i)$ and define $w(N)^{\perp,i}=w^i-w(N)^i$,
$w(N)^{\perp}=w-w(N)$.
We need a probabilistic argument to define the 
integrals $C(w^i,w^j)_{s,t}$, $C(w,w)_{s,t}$ in contrast with
$C(w(N),w)$, $C(w(N)^i,w^j)$.
Indeed, they are Stratonovich integrals and
we fix a version of them below.

\begin{thm}\label{Omega}
Let $\Omega$ be the subset of $W^d$
which consists of $w$
satisfying the following {\rm (i)-(iii)}.
\begin{itemize}
\item[{\rm (i)}] $\lim_{N\to\infty}w(N)$ converges in
$W_{m,\theta}(\RR^d)$ for all $(m,\theta)$
with $m(1-\theta)>2$.
\item[{\rm (ii)}] $\lim_{N\to\infty}C(w(N),w(N))$ converges in 
$W_{m,\theta}(\Delta\to \RR^d\otimes \RR^d)$ for all 
$(m,\theta)$ with $m(1-\theta)>2$.
Moreover these converge with respect to all norms
$\|~\|_{H,\theta}$~$(0<\theta<1)$.
\item[{\rm (iii)}] $\lim_{N\to\infty}C(w(N)^{\perp},w(N))$
and $\lim_{N\to\infty}C(w(N),w(N)^{\perp})$ converge to $0$ in
$W_{m,\theta}(\Delta\to\RR^d\otimes \RR^d)$ for all
$(m,\theta)$ with $m(1-\theta)>2$.
Moreover these converge to $0$ with respect to all norms
$\|~\|_{H,\theta}$~$(0<\theta<1)$.
\end{itemize}
Then $\Omega^c$ is a slim set and it holds that
$H\subset \Omega$ and $\Omega+H\subset \Omega$.
\end{thm}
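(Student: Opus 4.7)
The plan is threefold: show (a) that each of the conditions (i)--(iii) defining $\Omega$ holds on a quasi-sure set, so that $\Omega^{c}$ is slim; (b) verify the inclusion $H \subset \Omega$ by a direct computation; and (c) deduce $\Omega + H \subset \Omega$ from the bilinearity of $C(\cdot,\cdot)$ together with (a) and (b).

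The main step is (a). The key quantitative bound is the Gaussian moment estimate
\begin{equation*}
E\bigl[\,|(w(N{+}1)_{t} - w(N)_{t}) - (w(N{+}1)_{s} - w(N)_{s})|^{m}\,\bigr] \le C_{m} |t-s|^{m/2}\, 2^{-\beta m N}
\end{equation*}
for some $\beta > 0$, obtained from the scaling of Brownian motion on dyadic intervals. Integrating this against the weight $(t-s)^{-2-m\theta}$ yields $E[\|w(N{+}1) - w(N)\|_{m,\theta}^{m}] \le C\, 2^{-\beta m N}$ whenever $m(1-\theta) > 2$, and summability across $N$ gives $L^{m}$, hence almost-sure, convergence of $w(N)$ in $W_{m,\theta}(\RR^{d})$. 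To upgrade to quasi-sure convergence, I would note that, viewed as a $W_{m,\theta}$-valued Wiener functional, $w(N{+}1) - w(N)$ belongs to the first inhomogeneous Wiener chaos; consequently all of its Sobolev norms are controlled by its $L^{m}$ norm via Meyer's equivalence and hypercontractivity, so the $\mathbb{D}^{k,q}$-norm of the increment is again exponentially small in $N$. A Borel--Cantelli argument in capacity (as in \cite{malliavin, sugita}) then furnishes a slim exceptional set outside of which the $W_{m,\theta}$-convergence holds. The corresponding moment bound for $C(w(N{+}1), w(N{+}1)) - C(w(N), w(N))$ in $W_{m,\theta}(\Delta \to \RR^{d} \otimes \RR^{d})$ uses that this increment lies in the sum of the zeroth and second chaos, and the same reasoning applies. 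The H\"older-norm statements in (ii)--(iii) follow by applying the Garsia--Rodemich--Rumsey inequality pathwise to the Besov-norm bounds.

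For (b), given $h \in H$, Cauchy--Schwarz yields $|h_{t} - h_{s}| \le \|\dot h\|_{L^{2}} |t-s|^{1/2}$, so $h \in W_{m,\theta}$ for $\theta < 1/2$, and $h(N) \to h$ in the Besov and H\"older norms by a dominated-convergence argument. Since $h$ is absolutely continuous, $C(h,h)$ is a classical Stieltjes integral and $C(h(N), h(N)) \to C(h, h)$ in every required norm, while the cross-terms vanish because $\|h(N)^{\perp}\|_{H} \to 0$. For (c), bilinearity of $C$ on piecewise-linear inputs gives
\begin{equation*}
C\bigl((w{+}h)(N), (w{+}h)(N)\bigr) = C(w(N), w(N)) + C(w(N), h(N)) + C(h(N), w(N)) + C(h(N), h(N)),
\end{equation*}
with an analogous expansion for $C((w{+}h)(N)^{\perp}, (w{+}h)(N))$. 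The diagonal terms are controlled by (a) and (b); the mixed terms are handled by an integration-by-parts that reduces them to integrals of the form $\int_{s}^{t} (w(N)_{u} - w(N)_{s}) \otimes \dot h(N)(u)\,du$, which converge since $w(N) \to w$ in the relevant H\"older norm while $\dot h(N) \to \dot h$ in $L^{2}$. The cross-term statements for $w{+}h$ follow similarly, reducing to (iii) for $w$ together with $\|h(N)^{\perp}\|_{H} \to 0$.

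The principal obstacle is the quasi-sure upgrade in (a) for the second-level iterated integrals: one must simultaneously control every capacity $C^{k}_{q}$ across a continuum of admissible Besov parameters $(m, \theta)$ with $m(1-\theta) > 2$, and also across the H\"older scale. This requires carefully combining the chaos-grading structure via Meyer's inequalities with the Garsia--Rodemich--Rumsey passage from Besov moment bounds to H\"older regularity, uniformly in the parameters.
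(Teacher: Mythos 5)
Your overall strategy — moment estimates, exploiting the finite Wiener-chaos grading together with hypercontractivity/Meyer to upgrade $L^{2}$ bounds to capacity bounds, a Borel--Cantelli argument in capacity, then GRR for the H\"older scale, and bilinearity of $C(\cdot,\cdot)$ for $H\subset\Omega$ and $\Omega+H\subset\Omega$ — is exactly the route the paper takes. The paper introduces the increments $z(N)=w(N)-w(N{-}1)$, computes second-moment bounds, builds the exceptional sets $A_{N,i},B_{N,i,j},C_{N,i,j},D_{N,i,j}$ with threshold $N^{-2}$, and uses Lemma~\ref{tchebycheff} plus subadditivity of capacity, which is the concrete form of your plan.

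However, the key quantitative estimate you state is incorrect and would not give the claimed integrability range. You assert
\begin{equation*}
E\bigl[\,|(w(N{+}1)_{t}-w(N)_{t})-(w(N{+}1)_{s}-w(N)_{s})|^{m}\bigr]\le C_{m}\,|t-s|^{m/2}\,2^{-\beta m N}
\end{equation*}
for some $\beta>0$, but for Gaussian increments this is equivalent to $E[\,|z(N{+}1)_{t}-z(N{+}1)_{s}|^{2}\,]\lesssim|t-s|\,2^{-2\beta N}$; taking $|t-s|\asymp 2^{-N}$ the left side is $\asymp 2^{-N}$ while the right side is $\asymp 2^{-N(1+2\beta)}$, so no $\beta>0$ works. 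The correct bound, as in the paper, is $E[\,|z(N{+}1)_{t}-z(N{+}1)_{s}|^{2}\,]\le C\,\min(|t-s|,2^{-N})$, and the exponential gain in $N$ has to come from interpolating $\min(a,b)\le a^{1-\ep}b^{\ep}$, which necessarily degrades the $|t-s|$ exponent from $1$ to $1-\ep$. This is not cosmetic: with your stated bound, the Besov integral $\int\!\!\int |t-s|^{m/2-2-m\theta}\,ds\,dt$ converges only under $m(1-2\theta)>2$, a strictly stronger condition than the paper needs; with the $\min$ bound and an $\ep$-interpolation (and with the paper's convention of measuring first-level paths in the $\|\cdot\|_{m,\theta/2}$ scale, as in Lemma~\ref{pointwise estimate}), one gets convergence precisely under $m(1-\ep-\theta)>2$ for small $\ep$, i.e.\ $m(1-\theta)>2$. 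A related but minor imprecision: the chaos-order observation should be applied to the scalar random variables $\|z(N{+}1)^{i}\|_{m,\theta/2}^{m}$ and $\|C(\cdot,\cdot)\|_{m,\theta}^{m}$, which lie in chaos of order at most $m$ and $2m$ respectively; that is what actually enters the hypercontractivity/Meyer argument for capacities, not the chaos-degree of the vector-valued increments themselves. With those two corrections your proof follows the paper's.
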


A subset $A$ of $W^d$ is called a slim set if
$C^{s}_q(A)=0$ for all $s>0$ and $q>1$.
See \cite{malliavin}.
We note that 
$C(w^i,z^j)$ is meaningless even if 
both $w=(w^i)$ and $z=(z^j)$ belong to $\Omega$ generally.
In rough path analysis, it is proved in many papers that
the Wiener measure of the total set of paths which satisfy 
(i), (ii) above is 1.
We need the property (iii) for our applications. 
The property (iii) is essential in \cite{aida-wpoincare} also.
The fact that $\Omega^c$ is a slim set is proved in \cite{higuchi}.
We give the proof of Theorem~\ref{Omega} for the sake of completeness,
together with that of Theorem~\ref{quasi-cont-version}.

We use the following notation.
For $w\in \Omega$, we define
\begin{eqnarray}
C(w,w)_{s,t}&=&\lim_{N\to\infty}C(w(N),w(N))_{s,t}\label{convergence1}\\
C(w^i,w^j)_{s,t}&=&\lim_{N\to\infty}C(w(N)^i,w(N)^j)_{s,t}
\label{convergence2}
\end{eqnarray}
where $1\le i,j\le d$.
Then it holds that for any $w=(w^i)\in \Omega$
and $0\le s\le t\le 1$,
\begin{equation}
C(w^i,w^j)_{s,t}=
(w^i_t-w^i_s)(w^j_t-w^j_s)-C(w^j,w^i)_{s,t}
\label{ibp for rough path}
\end{equation}
and $\|C(w(N)^{\perp,i},w(N)^{\perp,j})\|_{m,\theta}$ converges to
$0$ for all $1\le i,j\le d$ and
$(m,\theta)$ with $m(1-\theta)>2$.
For later use, we define
$
\Omega_N=\{w(N)~|~w\in \Omega\}
$
and
$
\Omega_N^{\perp}=\{w-w(N)~|~w\in \Omega\}.
$
We denote the laws of $w(N)$ and $w(N)^{\perp}$ 
by $\mu_N$ and $\mu_N^{\perp}$ respectively.
Note that $\Omega_N$ is the same as the set of all piecewise linear
continuous paths $w$ such that 
$t\mapsto w_t$~$(\frac{k}{2^N}\le t\le \frac{k+1}{2^N},
0\le k\le 2^N-1)$ is a linear function and this space 
is isomorphic to $\RR^{2^Nd}$.
Also $w\in\Omega_N^{\perp}$  
is equivalent to $w\in \Omega$ and
$w(k/2^N)=0$ for all integers with $0\le k\le 2^N$.
For simplicity, we may use the notation $\xi=(\xi^1,\ldots,\xi^d)$ and 
$\eta=(\eta^1,\ldots,\eta^d)$ to denote the element of 
$\Omega_N$ and $\Omega_N^{\perp}$ respectively.

\begin{thm}\label{quasi-cont-version}
Let us fix a positive even integer $m$ and a positive number $\theta$ with
$m(1-\theta)>2$.
Let ${\mathfrak T}$ be the weakest topology such that
$w(\in W^d)\mapsto w(k/2^N)$ are continuous mappings
for all $k,N$.
The mappings $w(\in \Omega)\mapsto C(w^i,w^j)\in W_{m,\theta}(\Delta\to\RR)$
and $w(\in \Omega)\mapsto w\in W_{m,\theta/2}$
are $\infty$-quasi-continuous for all $i,j$ with respect to the
topology ${\mathfrak T}$.
\end{thm}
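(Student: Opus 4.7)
The plan is to upgrade the $\mu$-a.s.\ convergence in Theorem~\ref{Omega} to a quasi-uniform convergence of dyadic approximations, and then exploit that the approximating functionals are obviously $\mathfrak{T}$-continuous. More precisely, for each fixed $N$, the map $\Phi_N(w) := w(N)$ is the affine interpolant of the finitely many values $\{w(k/2^N)\}_{0\le k\le 2^N}$, so it is manifestly a continuous map from $(W^d,\mathfrak T)$ into the separable Banach space $W_{m,\theta/2}(\RR^d)$. Similarly, on the piecewise linear path $w(N)$ the iterated integral $\Psi_N^{i,j}(w) := C(w(N)^i, w(N)^j)$ can be computed in closed form and is a polynomial in the same dyadic values, so it is $\mathfrak{T}$-continuous with values in $W_{m,\theta}(\Delta \to \RR)$. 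Both $\Phi_N$ and $\Psi_N^{i,j}$ lie in ${\mathbb D}^\infty$ with values in their respective targets.

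The next step is to show, for every $r>1$ and $k\in\NN$,
\begin{equation*}
\Phi_N \longrightarrow \Phi_\infty := w \ \text{in } {\mathbb D}^{k,r}(W^d,W_{m,\theta/2}(\RR^d)), \qquad \Psi_N^{i,j} \longrightarrow \Psi_\infty^{i,j} := C(w^i,w^j) \ \text{in } {\mathbb D}^{k,r}.
\end{equation*}
By the chaos decomposition (each $\Psi_N^{i,j}$ lies in the first plus second Wiener chaos) and Meyer's equivalence, this reduces to uniform-in-$N$ moment estimates for $\|\Phi_N - \Phi_\infty\|_{m,\theta/2}$ and $\|\Psi_N^{i,j}-\Psi_\infty^{i,j}\|_{m,\theta}$ in every $L^r(\mu)$. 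These are precisely the Besov-type rough-path estimates underlying Theorem~\ref{Omega}: one bounds each term via the Garsia--Rodemich--Rumsey lemma applied to the Besov seminorm and then invokes hypercontractivity in the (fixed-order) Wiener chaos to promote the $L^2$ bounds to all $L^r$, with the same convergence persisting for Malliavin derivatives since the derivatives of $\Psi_N^{i,j}$ are themselves polynomials in $w(N)$.

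Once these Sobolev convergences are in place, a standard Chebyshev capacity inequality
\begin{equation*}
C^k_r\bigl(\{w\in W^d : \|\Psi_{N_j}^{i,j}(w) - \Psi_\infty^{i,j}(w)\|_{m,\theta} > 2^{-j}\}\bigr) \le 2^{jr}\,\|\Psi_{N_j}^{i,j} - \Psi_\infty^{i,j}\|_{{\mathbb D}^{k,r}}^r
\end{equation*}
(and its analogue for $\Phi_{N_j}$), together with a Borel--Cantelli extraction along a sufficiently fast subsequence $N_j$, produces a decreasing family of open sets $B_\ell$ with $C^k_r(B_\ell)\to 0$ on whose complements $\Phi_{N_j}\to\Phi_\infty$ and $\Psi_{N_j}^{i,j}\to\Psi_\infty^{i,j}$ converge uniformly in the relevant Banach norms. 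Since each $\Phi_{N_j}$ and $\Psi_{N_j}^{i,j}$ is $\mathfrak{T}$-continuous and uniform limits of Banach-valued continuous functions are continuous, this yields $\mathfrak{T}$-continuity of $\Phi_\infty$ and $\Psi_\infty^{i,j}$ on $B_\ell^c$. Letting $\ell\to\infty$ and then varying $(k,r)$ delivers the full $\infty$-quasi-continuity.

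The main obstacle I anticipate is the second step: establishing the ${\mathbb D}^{k,r}$-convergence of the iterated integrals as Besov-valued Wiener functionals rather than merely pointwise. The Malliavin derivatives produce cross-terms $C(w(N)^{\perp,i}, w(N)^j)$, $C(w(N)^i, w(N)^{\perp,j})$ and their integrated variants, and each must be controlled in the Besov norm $\|\cdot\|_{m,\theta}$ uniformly in $N$. This bookkeeping can be done because, on the second-chaos level, moments of $\|C(w(N),w(N))\|_{m,\theta}$ are already known to be uniformly bounded from the proof of Theorem~\ref{Omega}, and the limit of $C(w(N)^\perp,w(N))$ vanishing in that norm is exactly condition (iii) of Theorem~\ref{Omega}.
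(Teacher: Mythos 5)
Your proposal matches the paper's argument in all essentials: the paper likewise observes that $w(N)$ and $C(w(N),w(N))$ are built from finitely many dyadic increments (hence $\mathfrak T$-continuous), establishes $L^2$ bounds on the Besov norms of the level-$(N+1)$ minus level-$N$ differences (via the decomposition in equation (\ref{1 step difference of w(N)}) rather than directly on $\Phi_N-\Phi_\infty$), promotes them to $(q,s)$-Sobolev bounds by hypercontractivity on the fixed-order chaos, applies the Chebyshev capacity estimate of Lemma \ref{tchebycheff} plus a Borel--Cantelli extraction to build the sets $E_{K,m,\theta}$ on which the dyadic approximations converge uniformly, and then concludes by uniform limits of $\mathfrak T$-continuous Banach-valued maps. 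The only presentational difference is that the paper tracks consecutive differences $z(N+1)$, $C(w(N+1),w(N+1))-C(w(N),w(N))$, etc.\ and forms $\limsup$-sets, while you phrase the same bookkeeping as $\mathbb D^{k,r}$-convergence of the full sequence along a fast subsequence; these are interchangeable.
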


To prove these theorems, we use the following lemmas.

\begin{lem}\label{tchebycheff}
Let $u\in {\mathbb D}^{s,q}(W^d)$ and
$\tilde{u}$ be the $(q,s)$-quasi-continuous version of $u$.
Then there exists a positive number $C_{s,q}$ which is independent of
$u$ such that for all $R>0$, the $(q,s)$-capacity satisfies 
$$
C^s_q\left(\{w\in W^d~|~|\tilde{u}(w)|>R\}\right)
\le R^{-1} C_{s,q}\|u\|_{s,q}.
$$
\end{lem}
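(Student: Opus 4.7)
My plan is to derive the estimate directly from the Bessel-potential characterization of $(q,s)$-capacity. Recall that one may write
$$
\|u\|_{s,q} \asymp \|(I-L)^{s/2} u\|_{L^q(\mu)},
$$
where $L$ denotes the Ornstein--Uhlenbeck operator on $W^d$, and that the $(q,s)$-capacity admits the equivalent description
$$
C^s_q(A) \asymp \inf\bigl\{\|f\|_{L^q(\mu)} : f\ge 0,\ (I-L)^{-s/2}\tilde f \ge 1 \text{ q.e.\ on } A\bigr\}.
$$
The constants hidden in $\asymp$ depend only on $s$ and $q$, and will be absorbed into $C_{s,q}$ at the end.

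First I would write $g = (I-L)^{s/2} u \in L^q(W^d)$, so that $\tilde u = \widetilde{(I-L)^{-s/2} g}$ quasi-everywhere. The key analytic input is that the Bessel potential operator $G_s := (I-L)^{-s/2}$ is represented by a probabilistic kernel (essentially an integral against the OU semigroup $e^{tL}$) and is therefore a \emph{positivity-preserving} operator on $L^q$. In particular, from $-|g|\le g\le |g|$ we get
$$
-G_s|g| \le G_s g \le G_s|g| \qquad \mu\text{-a.s.},
$$
hence $|\tilde u| \le \widetilde{G_s |g|}$ quasi-everywhere.

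Next I would take $f := |g|/R \ge 0$, which belongs to $L^q(\mu)$ with $\|f\|_{L^q} = \|g\|_{L^q}/R$. The preceding inequality implies
$$
\widetilde{G_s f} = \widetilde{G_s|g|}/R \ge |\tilde u|/R > 1 \qquad \text{quasi-everywhere on } \{|\tilde u| > R\}.
$$
Plugging this admissible $f$ into the variational definition of $C^s_q$ yields
$$
C^s_q\bigl(\{|\tilde u|>R\}\bigr) \le \|f\|_{L^q} = \frac{\|g\|_{L^q}}{R} \le \frac{C_{s,q}}{R}\,\|u\|_{s,q},
$$
which is the desired bound. The constant $C_{s,q}$ absorbs the equivalences between the Sobolev norm $\|\cdot\|_{s,q}$ and $\|(I-L)^{s/2}\cdot\|_{L^q}$, and between the two descriptions of the capacity.

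The only genuine subtlety is the step asserting $|\tilde u|\le \widetilde{G_s|g|}$ q.e.: this requires that a pointwise inequality valid $\mu$-a.s.\ between two Bessel potentials of $L^q$-functions transfers to the quasi-continuous versions. This is a standard fact (quasi-continuous versions are unique up to $C^s_q$-polar sets, and both sides are quasi-continuous modifications of $L^q$-representatives), so I would simply cite Malliavin's or Sugita's treatment rather than reproduce it. Apart from this, the argument is a clean Markov/Chebyshev-type manipulation.
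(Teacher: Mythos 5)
Your argument is correct and is precisely the standard Chebyshev-type estimate for $(q,s)$-capacities via the positivity-preserving Bessel potential $(I-L)^{-s/2}$ of the Ornstein--Uhlenbeck operator; the paper itself gives no proof but refers to Malliavin's book, where exactly this argument appears. The one point you flag — transferring the a.e.\ inequality $|u|\le G_s|g|$ to a q.e.\ inequality between quasi-continuous versions — is indeed the only subtlety, and your treatment of it is adequate.
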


We refer the proof of Lemma~\ref{tchebycheff} to
\cite{malliavin}.
In Lemma~\ref{pointwise estimate}~(2),
the estimates (\ref{varphi}), (\ref{Cwvarphi}), (\ref{Cvarphiw})
hold with different constants under the weaker
assumption $m(1-\theta)>2$.
This is checked 
by the same proof as given below.
Under the stronger assumption 
$m(1-\theta)>4$, the constants
in the estimates 
(\ref{varphi}), (\ref{Cwvarphi}), (\ref{Cvarphiw})
are simpler.
We use this lemma in the proof of Lemma~\ref{U and V} too
and the simpleness of the constants make the calculation
simpler.
Therefore we consider the stronger assumption.
In the calculation below, 
constants $C$ may change line by line.

\begin{lem}\label{pointwise estimate}
$(1)$~
Let $x, y\in W_{m,\theta/2}(\RR)$
and set $(\bar{x}\cdot \bar{y})_{s,t}=(x_t-x_s)(y_t-y_s)$~
$(0\le s\le t\le 1)$.
Then
\begin{equation}
\|\bar{x}\cdot \bar{y}\|_{m,\theta}\le M_{m,\theta}\|x\|_{m,\theta/2}
\|y\|_{m,\theta/2},\label{product of bar}
\end{equation}
where $M_{m,\theta}=\sup_{x\ne 0, x\in W_{m,\theta/2}({\mathbb R})}
\frac{\|x\|_{H,\theta/2}}{\|x\|_{m,\theta/2}}$.

\noindent
$(2)$~
Let $w\in W_{m,\theta/2}(\RR)$ and $\varphi\in H$.
Suppose that $m(1-\theta)>4$.
Then 
\begin{eqnarray}
\|\varphi\|_{m,\theta/2}&\le&\|\varphi\|_H,\label{varphi}\\
\|C(w,\varphi)\|_{m,\theta}&\le&\|w\|_{m,\theta/2}\|\varphi\|_H,
\label{Cwvarphi}\\
\|C(\varphi,w)\|_{m,\theta}&\le&2\|w\|_{m,\theta/2}\|\varphi\|_H,
\label{Cvarphiw}\\
\|D\|C(w,\varphi)\|_{m,\theta}^m\|_H&\le& C_{m,\theta}
\|C(w,\varphi)\|_{m,\theta}^{m-1}
\|\varphi\|_{m,\theta/2},\label{DCwvarphi}\\
\|D\|C(\varphi,w)\|_{m,\theta}^m\|_H&\le&
C_{m,\theta}\|C(\varphi,w)\|_{m,\theta}^{m-1}\|\varphi\|_{m,\theta/2},
\label{DCvarphiw}
\end{eqnarray}
where $D$ denotes the $H$-derivative and $\|~\|_H$
stands for the norm of the Cameron-Martin subspace
$H$.
\end{lem}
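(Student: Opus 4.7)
The plan is to derive each of the five estimates directly from the definition of the Besov seminorm, using Cauchy--Schwarz on the Cameron--Martin factor $\varphi$ and Jensen's inequality to reduce $L^2$-type iterated integrals to the $\|w\|_{m,\theta/2}$ norm. For \eqref{product of bar}, I would use the pointwise bound $|x_t-x_s|\le\|x\|_{H,\theta/2}(t-s)^{\theta/2}$ to factor the H\"older seminorm of $x$ out of $\|\bar x\cdot\bar y\|_{m,\theta}^m$; the remaining double integral is exactly $\|y\|_{m,\theta/2}^m$, and the definition of $M_{m,\theta}$ converts $\|x\|_{H,\theta/2}$ into $M_{m,\theta}\|x\|_{m,\theta/2}$.

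Next, for \eqref{varphi} I would substitute the elementary bound $|\varphi_t-\varphi_s|\le(t-s)^{1/2}\|\varphi\|_H$ into the definition of $\|\varphi\|_{m,\theta/2}^m$; this reduces to $\|\varphi\|_H^m\int_0^1\!\int_0^t(t-s)^{m(1-\theta)/2-2}\,ds\,dt$, and under the hypothesis $m(1-\theta)>4$ the exponent is positive and the integral equals $[(m(1-\theta)/2)(m(1-\theta)/2-1)]^{-1}<1$, so no explicit constant survives. For \eqref{Cwvarphi}, I would apply Cauchy--Schwarz inside $C(w,\varphi)_{s,t}=\int_s^t(w_u-w_s)\dot\varphi_u\,du$, raise to the $m$-th power, and use Jensen for the convex map $x\mapsto x^{m/2}$ on the uniform measure of $[s,t]$ to obtain
\[
|C(w,\varphi)_{s,t}|^m\le \|\varphi\|_H^m(t-s)^{m/2-1}\int_s^t|w_u-w_s|^m\,du.
\]
Inserting this into the Besov integral, using $(u-s)\le(t-s)$ to manufacture the denominator $(u-s)^{2+m\theta/2}$ at the cost of a compensating factor $(t-s)^{m(1-\theta)/2-1}$, and swapping orders by Fubini, the remaining $t$-integral is at most $2/(m(1-\theta))<1/2$, leaving $\|w\|_{m,\theta/2}^m$. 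Estimate \eqref{Cvarphiw} would be handled in the same way after rewriting $C(\varphi,w)_{s,t}=\int_s^t(w_t-w_v)\dot\varphi_v\,dv$ by Fubini and using $(t-v)\le(t-s)$ in place of $(u-s)\le(t-s)$.

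For the $H$-derivative bounds \eqref{DCwvarphi} and \eqref{DCvarphiw}, I would differentiate under the integral sign: $D_h\|C(w,\varphi)\|_{m,\theta}^m$ equals the Besov integral of $m|C(w,\varphi)|^{m-2}\langle C(w,\varphi),C(h,\varphi)\rangle/(t-s)^{2+m\theta}$. H\"older's inequality with exponents $m/(m-1)$ and $m$ then factors this as $m\|C(w,\varphi)\|_{m,\theta}^{m-1}\|C(h,\varphi)\|_{m,\theta}$. The key trick is to apply \eqref{Cvarphiw} with the two arguments of $C$ playing swapped roles: treating $h\in H$ as the Cameron--Martin factor and $\varphi$ as the Besov factor gives $\|C(h,\varphi)\|_{m,\theta}\le 2\|h\|_H\|\varphi\|_{m,\theta/2}$, and taking supremum over $\|h\|_H\le 1$ yields \eqref{DCwvarphi}. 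The bound \eqref{DCvarphiw} follows from the mirror argument using \eqref{Cwvarphi} on $\|C(\varphi,h)\|_{m,\theta}$.

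The main obstacle I expect is the bookkeeping of the two slots in $C(\cdot,\cdot)$: in the $H$-derivative step the $\|\cdot\|_H$ norm must attach to the direction $h$ while the weaker $\|\cdot\|_{m,\theta/2}$ norm stays on $\varphi$, and this works only because \eqref{Cwvarphi} and \eqref{Cvarphiw} are genuinely two different inequalities---one for each choice of which slot carries the Cameron--Martin factor. Checking that $m(1-\theta)>4$ makes all the implicit Jensen and Fubini constants fit below $1$, so that \eqref{varphi}, \eqref{Cwvarphi}, \eqref{Cvarphiw} carry no visible constant, is routine but has to be tracked carefully through each interchange of integration.
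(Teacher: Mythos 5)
Your proposal is correct and follows essentially the same route as the paper: a pointwise H\"older/Cauchy--Schwarz bound on the Cameron--Martin factor, reduction to $\|w\|_{m,\theta/2}$ by Fubini using $(u-s)\le (t-s)$, and for the derivative bounds the identification $D_hC(w,\varphi)=C(h,\varphi)$ followed by H\"older and a reapplication of (\ref{Cwvarphi})--(\ref{Cvarphiw}) with $h$ in the Cameron--Martin slot. The only cosmetic differences are that the paper handles (\ref{Cvarphiw}) via the integration-by-parts identity $C(\varphi,w)=\bar\varphi\cdot\bar w-C(w,\varphi)$ (whence its factor $2$) rather than your direct representation $C(\varphi,w)_{s,t}=\int_s^t(w_t-w_v)\dot\varphi_v\,dv$, and uses H\"older with exponents $(m,m/(m-1))$ where you use Cauchy--Schwarz plus Jensen.
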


\begin{proof}
(1)~
We have
\begin{eqnarray*}
\|\bar{x}\cdot \bar{y}\|_{m,\theta}^m&=&
\int_0^1\int_0^t\frac{|(x_t-x_s)(y_t-y_s)|^m}{(t-s)^{2+m\theta}}dsdt\nonumber\\
&\le&
\int_0^1\int_0^t\frac{|(x_t-x_s)|^m(M_{m,\theta}\|y\|_{m,\theta/2})^m}
{(t-s)^{2+m\theta/2}}dsdt
=M_{m,\theta}^m\|x\|_{m,\theta/2}^m\|y\|_{m,\theta/2}^m.
\end{eqnarray*}

\noindent
(2)~The estimate (\ref{varphi}) follows from 
\begin{equation}
|\varphi_t-\varphi_s|\le \|\varphi\|_H(t-s)^{1/2}.\label{Holder varphi}
\end{equation}
We prove (\ref{Cwvarphi}).
Using the H\"older inequality, we have
\begin{eqnarray*}
\lefteqn{\frac{\left|\int_s^t(w(u)-w(s))\dot{\varphi}(u)du\right|^m}
{(t-s)^{2+m\theta}}}\nonumber\\
& &\le
\frac{1}{(t-s)^{m\theta/2}}
\left(\int_s^t\frac{|w(u)-w(s)|}{|u-s|^{(2+m\theta/2)/m}}
|\dot{\varphi}(u)|du
\right)^m\nonumber\\
& &\le
\int_s^t\frac{|w(u)-w(s)|^m}{|u-s|^{2+m\theta/2}}du
\frac{1}{(t-s)^{m\theta/2}}
\left(\int_s^t|\dot{\varphi}(u)|^{m/(m-1)}du\right)^{m-1},
\end{eqnarray*}
\begin{eqnarray*}
\frac{1}{(t-s)^{m\theta/2}}
\left(\int_s^t|\dot{\varphi}(u)|^{m/(m-1)}du\right)^{m-1}&\le&
\frac{1}{(t-s)^{m\theta/2}}
\left(\int_s^t|\dot{\varphi}(u)|^2du\right)^{m/2}
(t-s)^{\frac{m-2}{2}}\nonumber\\
&\le&
(t-s)^{\frac{(m-2)-m\theta}{2}}\|\varphi\|_{H}^m\nonumber\\
&\le&\|\varphi\|_H^m.
\end{eqnarray*}
Hence
\begin{eqnarray*}
\|C(w,\varphi)\|_{m,\theta}^m&\le&
\int_0^1\int_0^t
\left(\int_s^t\frac{|w(u)-w(s)|^m}{|u-s|^{2+m\theta/2}}du\right)dsdt
\|\varphi\|_H^m\nonumber\\
&=&\int_0^1\int_0^t
\left(\int_0^u\frac{|w(u)-w(s)|^m}
{|u-s|^{2+m\theta/2}}ds\right)dudt
\|\varphi\|_H^m\nonumber\\
&\le&\|w\|_{m,\theta/2}^m\|\varphi\|_H^m.
\end{eqnarray*}
We prove (\ref{Cvarphiw}).
Noting 
that for
$1$-dimensional paths $x,y$,
\begin{equation}
C(x,y)_{s,t}=(x_t-x_s)(y_t-y_s)-C(y,x)_{s,t},\label{ibp for x and y}
\end{equation}
we have
\begin{eqnarray*}
\|C(\varphi,w)\|_{m,\theta}&\le&
\|C(w,\varphi)\|_{m,\theta}
+\left(\int_0^1\int_0^t
\frac{|(w_t-w_s)(\varphi_t-\varphi_s)|^m}
{(t-s)^{2+m\theta}}dsdt\right)^{1/m}
\nonumber\\
&\le&
\|C(w,\varphi)\|_{m,\theta}+
\|w\|_{m,\theta/2}\|\varphi\|_H,
\end{eqnarray*}
where we have used (\ref{Holder varphi}).
This and (\ref{Cwvarphi}) prove (\ref{Cvarphiw}).
We consider (\ref{DCwvarphi}).
Let $h\in H$.
We have
\begin{eqnarray*}
D_{h}\left(\int_s^t(w_u-w_s)d\varphi_u\right)
&=&
(\varphi_t-\varphi_s)(h_t-h_s)
-\int_s^t(\varphi_u-\varphi_s)\dot{h}_udu.
\end{eqnarray*}
Therefore
\begin{eqnarray*}
\lefteqn{D_{h}\left(\|C(w,\varphi)\|_{m,\theta}^m\right)}\nonumber\\
& &=
m\int_0^1\int_0^t
\frac{\left((\varphi_t-\varphi_s)(h_t-h_s)-
C(\varphi,h)_{s,t}\right)C(w,\varphi)_{s,t}^{m-1}}{(t-s)^{2+m\theta}}
dsdt.
\end{eqnarray*}
Using the H\"older inequality, (\ref{Cwvarphi})
and (\ref{Holder varphi}), we get
\begin{eqnarray*}
D_{h}(\|C(w,\varphi)\|_{m,\theta}^m)&\le&
C_{m,\theta}\left(\|\varphi\|_{m,\theta/2}\|h\|_H
+\|C(\varphi,h)\|_{m,\theta}\right)\|C(w,\varphi)
\|_{m,\theta}^{m-1}\nonumber\\
&\le&2C_{m,\theta}
\|\varphi\|_{m,\theta/2}\|h\|_H\|C(w,\varphi)\|_{m,\theta}^{m-1}
\end{eqnarray*}
which proves (\ref{DCwvarphi}).
As for (\ref{DCvarphiw}),
noting that
$$
D_{h}\left(\int_s^t(\varphi_u-\varphi_s)dw_u\right)
=\int_s^t(\varphi_u-\varphi_s)\dot{h}_udu,
$$
we can prove (\ref{DCvarphiw}) similarly to (\ref{DCwvarphi}).
\end{proof}

\begin{lem}\label{Hoelder and Besov}
Let $0<\theta<1$ and $m$ be a positive even integer.
There exists a positive constant $N_{m,\theta}$ such that
for all $x,y\in H$, we have
\begin{equation}
\|C(x,y)\|_{H,\theta}\le
N_{m,\theta}\left(\|C(x,y)\|_{m,\theta}+\|x\|_{m,\theta/2}\|y\|_{m,\theta/2}\right).
\end{equation}
\end{lem}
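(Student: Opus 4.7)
My plan is to combine two applications of the Garsia--Rodemich--Rumsey (GRR) inequality, in one and two parameters respectively, bridged by Chen's identity for iterated integrals.

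First, I would apply the classical one-dimensional GRR inequality to each of $x, y \in H$ at H\"older exponent $\theta/2$ and integrability index $m$; this is exactly the estimate encoded in the definition of $M_{m,\theta}$ from the paper, and it gives
\[
\|x\|_{H,\theta/2} \le M_{m,\theta} \|x\|_{m,\theta/2}, \qquad \|y\|_{H,\theta/2} \le M_{m,\theta} \|y\|_{m,\theta/2}.
\]
Next, splitting the Stieltjes integral $\int_s^t (x_u-x_s)\,dy_u$ at any intermediate $u$ yields Chen's identity
\[
C(x,y)_{s,t} - C(x,y)_{s,u} - C(x,y)_{u,t} = (x_u - x_s)(y_t - y_u),
\]
and combining this with the first step (and $(u-s)(t-u)\le ((t-s)/2)^2$) shows that this ``defect'' is bounded uniformly in $u\in[s,t]$ by $M_{m,\theta}^{2}\|x\|_{m,\theta/2}\|y\|_{m,\theta/2}(t-s)^{\theta}$.

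The heart of the argument is a two-parameter GRR on $\Delta$: if $\phi:\Delta\to V$ is continuous, $\phi(s,s)=0$, and there exists $K\ge 0$ with $|\phi(s,t)-\phi(s,u)-\phi(u,t)|\le K(t-s)^{\theta}$ for all $s\le u\le t$, then
\[
\sup_{0\le s<t\le 1}\frac{|\phi(s,t)|}{(t-s)^{\theta}}\le C_{m,\theta}\bigl(K+\|\phi\|_{m,\theta}\bigr).
\]
Applied to $\phi=C(x,y)$ with $K=M_{m,\theta}^{2}\|x\|_{m,\theta/2}\|y\|_{m,\theta/2}$, this yields the lemma with $N_{m,\theta}=C_{m,\theta}\max(1,M_{m,\theta}^{2})$. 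To prove the two-parameter GRR I would follow Garsia's original adaptive argument: fix $s_0<t_0$, and from the finiteness of the restricted Besov integral $\int_{s_0}^{t_0}\int_{s_0}^{t_0}|\phi(u,v)|^{m}(v-u)^{-2-m\theta}\,du\,dv$ extract, via Chebyshev applied to the inner integral, a sequence of intermediate points $s_0=v_0<v_1<v_2<\cdots\to t_0$ whose consecutive gaps decrease geometrically and for which each $|\phi(v_k,v_{k+1})|$ is dominated by a Chebyshev-type bound in terms of $\|\phi\|_{m,\theta}$. Iterated use of the almost-additivity identity then expresses $\phi(s_0,t_0)$ as a telescoping sum $\sum_k\phi(v_k,v_{k+1})$ plus accumulated defects bounded by $K$ times geometric sums of scale factors.

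The main obstacle is controlling the accumulated Chen defects across all levels of refinement: naive uniform dyadic bisection produces a level-$j$ contribution of order $K(t_0-s_0)^{\theta}\cdot 2^{(j-1)(1-\theta)}$, whose sum diverges as the refinement depth grows (since $\theta<1$). The remedy is to use adaptive, non-uniform bisection guided by the Besov density, so that the smallness of $\phi$ at fine scales forced by $\|\phi\|_{m,\theta}<\infty$ is exploited to keep the telescoping sum convergent, while the defect series becomes a geometric series in $K(t_0-s_0)^{\theta}$ summing to a finite multiple. This adaptive coupling of partition to integrand is exactly the mechanism by which the two norms on the right-hand side of the lemma combine into the single H\"older control on the left.
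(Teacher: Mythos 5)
Your proposal is correct and follows essentially the same route as the paper: the paper's proof consists precisely of invoking the Garsia--Rodemich--Rumsey chaining argument for the almost-additive functional $C(x,y)$, with Chen's identity supplying the defect $(x_r-x_s)(y_t-y_r)$, which is controlled by the one-parameter GRR bounds $\|x\|_{H,\theta/2}\le M_{m,\theta}\|x\|_{m,\theta/2}$ and likewise for $y$ (the paper merely normalizes $\|y\|_{m,\theta/2}=1$ first, a cosmetic difference). Your explicit formulation of the two-parameter GRR lemma and the adaptive (chain, not full dyadic tree) subdivision is exactly the mechanism the paper refers to when it says the proof is ``almost similar to'' the classical GRR argument.
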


\begin{proof}
It suffices to prove the case where
$\|y\|_{m,\theta/2}=1$.
In this case, the proof is almost similar to
\cite{grr} noting Chen's identity:
$
C(x,y)_{s,t}=C(x,y)_{s,r}+C(x,y)_{r,t}+(x(r)-x(s))
\otimes (y(t)-y(r))\quad 0<s<r<t<1.
$
See also \cite{fv}
\end{proof}

\begin{proof}
[Proof of Theorem~$\ref{Omega}$ and Theorem~$\ref{quasi-cont-version}$]

Let $z(N)=w(N)-w(N-1)$~$(N=1,2,\ldots)$, where $w(0)=0$.
Then $\{z(N) ; N=1,2,\ldots\}$ are independent random variables 
with values in the set of piecewise linear functions.
Using explicit form of $z(N)$, we have
\begin{eqnarray}
E[|w(N)_t-w(N)_s|^2]&\le& d|t-s|\\
E[|z(N)_t-z(N)_s|^2]&\le& C_d\min\left(|t-s|,2^{-N}\right)\\
E[|w(N)^{\perp}_t-w(N)^{\perp}_s|^2]&\le&C_d
\min\left(|t-s|,2^{-N}\right).\label{estimate for wNperp}
\end{eqnarray}
We estimate $L^2$-norm of $\|z(N)^i\|_{m,\theta/2}^m$.
\begin{eqnarray*}
\lefteqn{
\left\|\|z(N)^i\|_{m,\theta/2}^{m}\right\|_{L^2(\mu)}}\nonumber\\
& &=\left\{\int_{W^d}d\mu
\iint_{(s,t)\in \Delta, (s',t')\in \Delta}
\frac{(z(N)^i_t-z(N)^i_s)^m(z(N)^i_{t'}-z(N)^i_{s'})^m}
{|t-s|^{2+m\theta/2}|t'-s'|^{2+m\theta/2}}dsdt~ds'dt'\right\}^{1/2}
\nonumber\\
& &\le
\iint_{(s,t)\in \Delta}
\frac{E[(z(N)^i_t-z(N)^i_s)^{2m}]^{1/2}}{|t-s|^{2+m\theta/2}}dsdt
\nonumber\\
& &= C_m\iint_{(s,t)\in \Delta}
\frac{E[(z(N)^i_t-z(N)^i_s)^{2}]^{m/2}}{|t-s|^{2+m\theta/2}}dsdt
\nonumber\\
& &\le C_m\iint_{(s,t)\in \Delta}
\frac{\min(|t-s|, 2^{-N})^{m/2}}{|t-s|^{2+m\theta/2}}dsdt
\nonumber\\
& &\le
C_m\iint_{(s,t)\in \Delta}
|t-s|^{\frac{m}{2}(1-\ep-\theta)-2}2^{-\ep mN/2}dsdt.
\end{eqnarray*}
Thus if $m(1-\theta)>2$, choosing 
an appropriate $\ep>0$, there exists a positive number
$C_{m,\theta,\ep}$
\begin{equation}
\left\|\|z(N)^i\|_{m,\theta/2}^{m}\right\|_{L^2(\mu)}
\le C_{m,\theta,\ep}2^{-\ep mN/2}.\label{zN}
\end{equation}
Noting $E[|w(N)^i_t-w(N)^i_s|^{2m}]\le
E[|w^i_t-w^i_s|^{2m}]\le C_m|t-s|^m$ 
and by the calculation similar to the above, if $m(1-\theta)>2$,
\begin{eqnarray}
\left\|\|w(N)\|_{m,\theta/2}^m\right\|_{L^2(\mu)}&\le& C_{m,\theta}\label{wN}\\
\left\|\|w(N)^{\perp,i}\|_{m,\theta/2}^{m}\right\|_{L^2(\mu)}
&\le& C_{m,\theta,\ep}2^{-\ep mN/2}.\label{wNperp}
\end{eqnarray}
Hence by (\ref{product of bar}),
\begin{equation*}
\left\|
\left\|\overline{w(N)^i}\cdot\overline{z(N+1)^j}\right\|_{m,\theta}^m
\right\|_{L^2(\mu)}
\le C_{m,\theta,\ep}2^{-\ep m(N+1)/2},
\end{equation*}
where $\left(\overline{w(N)^i}\cdot\overline{z(N+1)^j}\right)_{s,t}
=\left(w(N)^i_t-w(N)^i_s\right)\left(z(N+1)^j_t-z(N+1)^j_s\right)$.
Similarly,
\begin{equation*}
\left\|
\left\|\overline{w(N)^{\perp,i}}\cdot\overline{w(N)^{j}}\right\|_{m,\theta}
\right\|_{L^2(\mu)}
\le C_{m,\theta,\ep}2^{-\ep m(N+1)/2}.
\end{equation*}
We estimate $C(z(N+1)^{i},w(N)^j)_{s,t}$.
By the independence of $z(N+1)^{i}$ and $w(N)^j$,
\begin{eqnarray*}
E\left[C(z(N+1)^{i},w(N)^j)_{s,t}^m\right]&=&
C_mE\left[\left(\int_s^t(z(N+1)_u^i-z(N+1)_s^i)^2du\right)^{m/2}\right]
\nonumber\\
&\le&C_mE\left[\int_s^t\left(z(N+1)^i_u-z(N+1)^i_s\right)^mdu\right]
\left(\int_s^t1du\right)^{(m-2)/2}\nonumber\\
&\le&
C_m\min\left(|t-s|^m,2^{-(N+1)m/2}\right).
\end{eqnarray*}
Using this,
\begin{eqnarray*}
\left\|\|C(z(N+1)^i,w(N)^j)\|_{m,\theta}^m\right\|_{L^2(\mu)}
&\le&
\iint_{(s,t)\in
\Delta}\frac{E\left[C(z(N+1)^{i},w(N)^j)_{s,t}^{2m}\right]^{1/2}}
{|t-s|^{2+m\theta}}dsdt\nonumber\\
&\le&2^{-(N+1)m\ep/2}\iint_{(s,t)\in \Delta}
|t-s|^{m(1-\ep-\theta)-2}dsdt.
\end{eqnarray*}
Hence if $m(1-\theta)>1$, then we have
\begin{equation*}
\left\|\|C(z(N+1)^i,w(N)^j)\|_{m,\theta}^m\right\|_{L^2(\mu)}
\le C_{m,\theta,\ep}2^{-(N+1)m\ep/2}.
\end{equation*}
Similarly if $m(1-\theta)>1$,
\begin{eqnarray*}
\left\|\|C(w(N)^{\perp,i},w(N)^j)\|_{m,\theta}^m\right\|_{L^2(\mu)}
&\le& C_{m,\theta,\ep}2^{-Nm\ep/2},\\
\left\|\|C(z(N)^{i},z(N)^j)\|_{m,\theta}^m\right\|_{L^2(\mu)}
&\le& C_{m,\theta,\ep}2^{-Nm\ep/2}\qquad (i\ne j).
\end{eqnarray*}
When $i=j$, under the assumption that
$m(1-\theta)>2$,
\begin{eqnarray*}
\|\|C(z(N)^i,z(N)^i)\|_{m,\theta}^m\|_{L^2(\mu)}
&=&\left(\frac{1}{2}\right)^m
\|\|\overline{z(N)^i}\cdot\overline{z(N)^i}\|_{m,\theta}^m\|_{L^2(\mu)}\\
&\le&
\left(\frac{M_{m,\theta}}{2}\right)^m
\|\|z(N)^i\|_{m,\theta/2}^{2m}\|_{L^2(\mu)}.
\end{eqnarray*}
Let 
\begin{eqnarray*}
A_{N,i}&=&\left\{w~\Big |~\|z(N+1)^i\|_{m,\theta/2}>N^{-2}\right\},\\
B_{N,i,j}&=&
\left\{w~\Big |~\|C(w(N+1)^i,w(N+1)^j)-C(w(N)^i,w(N)^j)\|_{m,\theta}
>N^{-2}\right\},\\
C_{N,i,j}&=&
\left\{w~\Big |~
\|\overline{w(N)^{\perp,i}}\cdot \overline{w(N)^j}\|_{m,\theta}>N^{-2}
\right\}\\
D_{N,i,j}&=&\left\{w~\Big |~\|C(w(N)^{\perp,i},w(N)^j)\|_{m,\theta}
>N^{-2}
\right\}.
\end{eqnarray*}
Note that $\|z(N+1)^i\|_{m,\theta/2}^m$, 
$\|C(w(N+1)^i,w(N+1)^j)-C(w(N)^i,w(N)^j)\|_{m,\theta}^m$,
$\|\overline{w(N)^{\perp,i}}\cdot \overline{w(N)^j}\|_{m,\theta}^m$,
$\|C(w(N)^{\perp,i},w(N)^j)\|_{m,\theta}^m$,
are Wiener chaos of order at most $2m$.
Hence by the hypercontractivity of the Ornstein-Uhlenbeck semi-group,
their $L^2$-norms and the $(q,s)$-Sobolev norms are equivalent for
any
$q\ge 2, s>0$.
By Lemma~\ref{tchebycheff} and the above estimates, we obtain
\begin{eqnarray}
\max\left(C^{s}_q(A_{N,i}), C^s_q(C_{N,i,j}), C^s_q(D_{N,i,j})\right)&\le&
C_{s,q,m,\theta,\ep}N^{2m}2^{-\ep mN/2}.\label{ACD}
\end{eqnarray}
Since
\begin{eqnarray}
\lefteqn{C(w(N+1)^i,w(N+1)^j)-C(w(N)^i,w(N)^j)}\nonumber\\
& &=
\left(w(N)^i_t-w(N)^i_s\right)
\left(z(N+1)^j_t-z(N+1)^j_s\right)-
C(z(N+1)^j,w(N)^i)_{s,t}\nonumber\\
& &+C(z(N+1)^i,w(N)^j)_{s,t}+C(z(N+1)^i,z(N+1)^j)_{s,t},\label{1 step difference
of w(N)}
\end{eqnarray}
using the subadditivity of the capacity,
we have
\begin{equation}
C^s_q(B_{N,i,j})\le C_{s,q,m,\theta,\ep}N^{2m}
2^{-\ep mN/2}.\label{B}
\end{equation}
Here we note that $A_{N,i}, B_{N,i,j}, C_{N,i,j}, D_{N,i,j}$
depend on $(m,\theta)$ satisfying $m(1-\theta)>2$.
Let 
\begin{equation*}
E=\cup_{1\le i,j\le d, m,\theta\in {\mathbb Q}}\left\{
(\limsup_{N\to\infty}A_{N,i})
\cup(\limsup_{N\to\infty}B_{N,i,j})
\cup(\limsup_{N\to\infty}C_{N,i,j})
\cup(\limsup_{N\to\infty}D_{N,i,j})
\right\}.
\end{equation*}
By (\ref{ACD}) and (\ref{B}), $E$ is a slim set.
Since $E^c\subset \Omega$, $\Omega^c$ is a slim set.
The properties that $H\subset \Omega$ and $\Omega+H\subset \Omega$
follows from the estimates in Lemma~\ref{pointwise estimate}.
To complete the proof of Theorem~\ref{Omega}, we need
to show 
\begin{itemize}
\item[{\rm (a)}] the sequences of iterated integrals converge with respect to
$\|~\|_{H,\theta}$,
\item[{\rm (b)}] the limit is continuous with respect to $(s,t)\in \Delta$.
\end{itemize}
The item (a) follows from Lemma~\ref{Hoelder and Besov}
and the convergences in $L_{m,\theta}$.
The item (b) follows from (a).
Now we prove Theorem~\ref{quasi-cont-version}.
Let $E_{K,m,\theta}=\cap_{1\le i,j\le d}\left\{\cap_{N=K}^{\infty}
(A_{N,i}^c\cap B_{N,i,j}^c\cap C_{N,i,j}^c\cap D_{N,i,j}^c)
\right\}$.
Then $w(N)$, $C(w(N),w(N))$ converges
uniformly with respect to $\|~\|_{m,\theta/2}$
on $E_{K,m,\theta}$.
Therefore $C(w,w), w$ is continuous with respect to 
${\mathfrak T}$ on $E_{K,m,\theta}\cap \Omega$.
For any $(s,q)$ and $\ep>0$, 
we have $C^{s}_q(E_{K,m,\theta}^c)<\ep$ for sufficiently large $K$.
This completes the proof of Theorem~\ref{quasi-cont-version}.
\end{proof}

We fix a version of the solution of SDE
~(\ref{sde}) using Theorem~\ref{Omega}.
To this end, we introduce a distance function on
$\Omega$.

\begin{dfi}Let $(2/3)<\theta<\theta'<1$
and assume $m(1-\theta')>2$.
For $w,z\in \Omega$, let
\begin{eqnarray}
d_{\Omega}(w,z)&=&\max\left\{
\max_{i,j}\|C(w^i,w^j)-C(z^i,z^j)\|_{H,\theta},
\max_i\|w^i-z^i\|_{m,\theta'/2}
\right\}.
\end{eqnarray}
\end{dfi}

We note that $(\Omega,d_{\Omega})$ is a separable metric space.
For $h\in H$, let $X(t,a,h)$ be the solution to the following
ODE:
\begin{eqnarray*}
\dot{X}(t,a,h)&=&\left(L_{X(t,a,h)}\right)_{\ast}\dot{h}_t,\\
X(0,a,h)&=&a\in G.
\end{eqnarray*}
By the assumption that $\frac{2}{3}<\theta<1$,
the topology by the distance $d_{\Omega}$ is stronger than
the $p$-variation topology with $p>\frac{2}{\theta}$.
Hence by Theorem~\ref{Omega} and the universal limit theorem
~\cite{lyons98, lq, fv2}, 
for any $w\in \Omega$, $t\ge 0, a\in G$, the limit
\begin{equation}
\lim_{N\to\infty}X(t,a,w(N))\label{approximation}
\end{equation}
exists.
We denote the limit by
$X(t,a,w)$.
For this limit, we have the following.

\begin{pro}\label{solution of sde}
The measurable mapping $X : [0,\infty)\times G\times \Omega\to G$
satisfies the following.

\noindent
$(1)$~$X(t,a,w)$ is a version of the solution to the 
SDE~$(\ref{sde})$.

\noindent
$(2)$~
For any $a$, the mapping $w\mapsto X(\cdot,a,w)\in C([0,1]\to G)$ is 
continuous in the sense that
there exists an increasing function $F$ on
$\RR$ such that for all $w,z\in \Omega$,
$$
\sup_{0\le t\le 1}d(X(t,a,w),X(t,a,z))\le 
F(\max\{d_{\Omega}(0,w),d_{\Omega}(0,z)\})
d_{\Omega}(w,z).
$$
Moreover the mapping $w\mapsto X(\cdot,a,w)$ is $\infty$-quasi-continuous 
with respect to the supremum norm of $W^d$
for any
$a$.

\noindent
$(3)$~For all $t,a,w$, $X(t,a,w)=aX(t,e,w)$.
In particular, the mapping 
$a\mapsto X(t,a,w)$ is a $C^{\infty}$-diffeomorphism.

\noindent
$(4)$~For any $\phi \in H^1([0,1]\to G~|~\phi_0=e)$, 
it holds that
\begin{equation}
X(t,\phi_t,w)=X(t,e,w+\zeta(\phi,w)),
\end{equation}
where $\zeta(\phi,w)$ is the solution to 
\begin{eqnarray}
\dot{\zeta}(\phi,w)_t&=&
Ad\left(X(t,e,w)^{-1}\right)\left(\phi_t^{-1}\dot{\phi}_t\right)
\qquad\quad t>0\\
\zeta(\phi)_0&=&0.
\end{eqnarray}

\noindent
$(5)$~For $h\in H$,
let $Z(t,h,w)$ be the $H^1$-path on $G$ which satisfies the
ODE:
\begin{eqnarray}
Z(t,h,w)^{-1}\dot{Z}(t,h,w)&=&
Ad\left(X(t,e,w)\right)\dot{h}_t\qquad \quad t>0\\
Z(0,h,w)&=&e.
\end{eqnarray}
Then it holds that $X(t,Z(t,h,w),w)=X(t,e,w+h)$.

\noindent
$(6)$~For any $h\in H$
\begin{equation}
\zeta\left(Z(\cdot,h,w),w\right)=h.
\end{equation}
\end{pro}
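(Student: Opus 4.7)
The proposition collects the basic properties of the rough-path solution $X(t,a,w) := \lim_{N\to\infty} X(t,a,w(N))$ defined in (\ref{approximation}). My plan is to establish (1)--(6) by first verifying each statement at the level of the dyadic polygonal approximations $w(N)$, where $X(\cdot,a,w(N))$ satisfies an honest ODE driven by a Lipschitz path, and then passing to the limit via the universal limit theorem. Since $d_{\Omega}$ dominates both the $p$-variation norm of the path and the controlled norm of the second-level iterated integral for any $p > 2/\theta$, and since $\theta > 2/3$ forces $p < 3$, the It\^{o}--Lyons solution map is locally Lipschitz continuous with respect to $d_{\Omega}$.

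For (1), the Wong--Zakai theorem on manifolds gives that for $\mu$-almost every $w$, $X(t,a,w(N))$ converges uniformly to the Stratonovich solution of (\ref{sde}); since the rough-path limit also agrees with this for every $w\in\Omega$ and $\Omega^c$ is slim, $X(t,a,w)$ is a version of the Stratonovich solution. For (2), the local Lipschitz estimate of the It\^{o}--Lyons map gives the continuity modulus $F$ directly. The $\infty$-quasi-continuity is obtained by combining this rough-path continuity with Theorem~\ref{quasi-cont-version}: on each set $E_{K,m,\theta}$ appearing in that proof $(w,C(w,w))$ is continuous in the topology $\mathfrak{T}$, hence so is $w\mapsto X(\cdot,a,w)$, and the $(q,s)$-capacities of the complements tend to $0$. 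For (3), both $X(t,a,w(N))$ and $aX(t,e,w(N))$ solve the same left-invariant ODE with initial datum $a$ and therefore coincide by ODE uniqueness; passing to the limit in $\Omega$ yields $X(t,a,w)=aX(t,e,w)$ and the claimed $C^{\infty}$-diffeomorphism property in $a$.

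For (4), I would carry out the standard Lie-group change-of-variable at the polygonal level: setting $Y(t) := \phi_t X(t,e,w(N))$, one finds
\begin{equation*}
Y(t)^{-1}\dot Y(t) = \mathrm{Ad}(X(t,e,w(N))^{-1})(\phi_t^{-1}\dot\phi_t) + \dot w(N)_t = \dot\zeta(\phi,w(N))_t + \dot w(N)_t,
\end{equation*}
so by ODE uniqueness $Y(t) = X(t,e, w(N) + \zeta(\phi,w(N)))$, and the left-hand side equals $X(t,\phi_t,w(N))$ by (3). To pass to the limit I need continuity of $w\mapsto \zeta(\phi,w)$ as an $H$-valued map on $\Omega$, which follows from (2) because $\zeta(\phi,w)$ is obtained by integrating $\mathrm{Ad}(X(\cdot,e,w)^{-1})$ against the fixed $H^1$ datum $\phi$; the invariance $\Omega+H\subset\Omega$ from Theorem~\ref{Omega} then ensures $w+\zeta(\phi,w)\in\Omega$. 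For (6), a direct computation gives $\dot\zeta(Z(\cdot,h,w),w)_t = \mathrm{Ad}(X^{-1})\mathrm{Ad}(X)\dot h_t = \dot h_t$ with matching initial value $0$, so $\zeta(Z(\cdot,h,w),w)=h$; (5) is then the special case of (4) with $\phi = Z(\cdot,h,w)$.

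The main technical obstacle lies in (4). When one perturbs $w$ by the $H$-valued functional $\zeta(\phi,w)$ of $w$ itself, one must confirm that the rough-path lift of $w+\zeta(\phi,w)$ is the algebraic (Chen) combination of the lift of $w$ and the smooth path $\zeta(\phi,w)$, and that this combination varies continuously in $w\in\Omega$. The cross-iterated-integral estimates $\|C(w,\varphi)\|_{m,\theta}$ and $\|C(\varphi,w)\|_{m,\theta}\le C\|w\|_{m,\theta/2}\|\varphi\|_H$ from Lemma~\ref{pointwise estimate}, together with the Chen identity used in Lemma~\ref{Hoelder and Besov}, supply exactly the control needed for the identity at level $w(N)$ to survive the limit $N\to\infty$.
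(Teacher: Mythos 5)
Your proposal is correct and follows essentially the same route as the paper: establish the identities (3)--(6) for smooth drivers (the dyadic polygonal approximations $w(N)\in H$, or equivalently all of $H$), then extend to all of $\Omega$ using the density of $H$ in $(\Omega,d_{\Omega})$ together with the continuity of the It\^o--Lyons map asserted in (2), while (1) is Wong--Zakai and (2) is the universal limit theorem combined with Theorem~\ref{quasi-cont-version}. The paper's proof is much more terse, but the underlying argument is the one you spelled out, including the key use of $\Omega+H\subset\Omega$ and the cross-iterated-integral estimates to make sense of $w+\zeta(\phi,w)$ as a rough path.
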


\begin{proof}
Part (1) is a standard result in stochastic analysis.
Part (2) is a consequence of rough path analysis.
The claim that (3),(4),(5),(6) hold for almost all $w$
is also standard in stochastic analysis.
However, these identities hold for all $w\in \Omega$.
This follows from the fact:
\begin{itemize}
\item[(i)]~the claims (3),(4),(5),(6) hold for all $w\in H$,
\item[(ii)]~The Cameron-Martin subspace 
$H$ is a dense subset in $\Omega$ with respect to the topology defined by
$d_{\Omega}$,
\item[(iii)]~Part (2).
\end{itemize}
\end{proof}

The following 
will be used 
in the next section.

\begin{lem}\label{H-derivative}
Suppose that $m(1-\theta)>2$.
Let $(x,y)=(w(N)^i,w(N)^j), (w^i,w^j)$~for $i\ne j$ or
$(x,y)=(w(N)^i,w(N)^{\perp,j}), (w(N)^{\perp,i},w(N)^{j})$ for
any $i,j$.
Then the following estimates hold for almost all $w$.
\begin{eqnarray}
\|D^k\|x\|_{m,\theta/2}^m\|_H&\le&C_{m,\theta,k}
\|x\|_{m,\theta/2}^{m-k}\qquad \mbox
{for all $1\le k\le m$},\label{Dx}\\
\|D^k\|C(x,y)\|_{m,\theta}^m\|_H&\le&
C_{m,\theta,k}
\sum_{l=0}^{\left[\frac{k}{2}\right]}
\left(\|x\|_{m,\theta/2}^{2}+\|y\|_{m,\theta/2}^{2}\right)^{(k-2l)/2}
\|C(x,y)\|_{m,\theta}^{m+l-k}.\nonumber\\
& &\mbox{for all $1\le k\le 2m$}.\label{DCxy}
\end{eqnarray}
\end{lem}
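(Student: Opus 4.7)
The plan is to differentiate directly inside the Besov integrals defining $\|x\|_{m,\theta/2}^m$ and $\|C(x,y)\|_{m,\theta}^m$, exploit the fact that in every listed case $x$ and $y$ are bounded \emph{linear} functionals of $w\in W^d$ (so only the first Fr\'echet derivative of $x$ is nonzero, and only the first and second Fr\'echet derivatives of $C(x,y)$ are nonzero), and then apply H\"older's inequality together with the pointwise bounds of Lemma~\ref{pointwise estimate}.

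For (\ref{Dx}): in each of the four cases the derivative $Dx(h)$ equals the corresponding linear image of $h$ (namely $h^i$, $h(N)^i$, or $h(N)^{\perp,i}$), all of them $H$-paths with $\|Dx(h)\|_H\le\|h\|_H$; all higher Fr\'echet derivatives of $x$ vanish. Differentiating $k$ times inside the integral yields
\[
D^k\bigl(|x_t-x_s|^m\bigr)(h_1,\ldots,h_k)=\frac{m!}{(m-k)!}(x_t-x_s)^{m-k}\prod_{i=1}^k\bigl(Dx_t(h_i)-Dx_s(h_i)\bigr),
\]
and H\"older's inequality with exponents $\tfrac{m}{m-k}$ and $\underbrace{m,\ldots,m}_{k}$ gives
\[
\bigl|D^k\|x\|_{m,\theta/2}^m(h_1,\ldots,h_k)\bigr|\le C_{m,k}\,\|x\|_{m,\theta/2}^{m-k}\prod_{i=1}^k\|Dx(h_i)\|_{m,\theta/2}.
\]
The estimate (\ref{varphi}) (valid under the weaker assumption $m(1-\theta)>2$, as noted in the paragraph preceding this lemma) gives $\|Dx(h_i)\|_{m,\theta/2}\le\|h_i\|_H$, and taking supremum over $\|h_i\|_H\le 1$ proves (\ref{Dx}).

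For (\ref{DCxy}): using bilinearity of $C$ together with linearity of $x,y$ in $w$,
\[
DC(x,y)(h)=C(Dx(h),y)+C(x,Dy(h)),
\]
\[
D^2C(x,y)(h_1,h_2)=C(Dx(h_1),Dy(h_2))+C(Dx(h_2),Dy(h_1)),
\]
and $D^{j}C(x,y)=0$ for $j\ge 3$. Applying the Leibniz rule to $C(x,y)^m$, only partitions of $\{1,\ldots,k\}$ into blocks of sizes $1$ and $2$ survive. Parametrising such partitions by the number $l\in\{0,\ldots,[k/2]\}$ of blocks of size $2$, there are $k-2l$ singleton blocks and $m-k+l$ undifferentiated $C(x,y)$ factors. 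Integrating against $(t-s)^{-(2+m\theta)}$ and splitting the weight with exponent $\tfrac{m-k+l}{m}$ on the $C(x,y)$ factors and $\tfrac{1}{m}$ on each of the $k-l$ derivative factors (the exponents sum to $1$ since $(m-k+l)+(k-2l)+l=m$), H\"older's inequality yields a bound of the form
\[
C_{m,k}\,\|C(x,y)\|_{m,\theta}^{m-k+l}\prod_{\text{singletons}}\|DC(x,y)(h_\cdot)\|_{m,\theta}\prod_{\text{pairs}}\|D^2C(x,y)(h_\cdot,h_\cdot)\|_{m,\theta}.
\]
The singleton factors are controlled by (\ref{Cwvarphi})--(\ref{Cvarphiw}) and the triangle inequality, giving $\|DC(x,y)(h)\|_{m,\theta}\le C\|h\|_H(\|x\|_{m,\theta/2}+\|y\|_{m,\theta/2})$. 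The pair factors involve $C$ applied to two $H$-paths, and (\ref{Cwvarphi}) combined with (\ref{varphi}) yields $\|D^2C(x,y)(h_i,h_j)\|_{m,\theta}\le C\|h_i\|_H\|h_j\|_H$. Taking supremum over $\|h_i\|_H\le 1$ and using $(\|x\|_{m,\theta/2}+\|y\|_{m,\theta/2})^{k-2l}\le 2^{(k-2l)/2}(\|x\|_{m,\theta/2}^2+\|y\|_{m,\theta/2}^2)^{(k-2l)/2}$, then summing over $l$, gives (\ref{DCxy}).

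There is no deep obstacle; the argument is a careful Leibniz bookkeeping reducing everything to the already-established pointwise estimates. The main care is (a) verifying that only blocks of size $\le 2$ contribute to the Leibniz expansion, which follows from linearity of $x$ and $y$ in $w$, and (b) checking that the various ``cross'' iterated integrals appearing in the derivatives fall within the scope of Lemma~\ref{pointwise estimate}. The qualifier ``for almost all $w$'' in the statement reflects the fact that all these objects are well defined on the full-measure set $\Omega$.
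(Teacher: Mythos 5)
Your proof is correct and follows essentially the same approach as the paper: differentiate the Besov integral, apply H\"older's inequality, and invoke the pointwise estimates of Lemma~\ref{pointwise estimate}. The paper only writes out the $k=1$ case explicitly and declares the others similar; your Leibniz bookkeeping (only blocks of size $\le 2$ survive because $x\mapsto C(x,y)$ is bilinear and $x,y$ are linear in $w$) makes precise why (\ref{DCxy}) has exactly the shape it does.
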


\begin{proof}
We consider the case where $k=1$ and
$x=w(N)^i$ in (\ref{Dx}).
The proof of other cases are similar to it.
We have
\begin{eqnarray*}
|D_h\|x\|_{m,\theta/2}^m|&=&
\left|
m\int_0^1\int_0^t\frac{(h(N)^i_t-h(N)^i_s)(w(N)^i_t-w(N)^i_s)^{m-1}}
{(t-s)^{2+m\theta/2}}dsdt\right|
\nonumber\\
&=&
m\|h(N)^i\|_{m,\theta/2}\|w(N)^i\|_{m,\theta/2}^{m-1}\nonumber\\
&\le&
C_{m,\theta}\|h\|_H\|w(N)^i\|_{m,\theta/2}^{m-1}
\end{eqnarray*}
which implies (\ref{Dx}).
We prove (\ref{DCxy}) in the case where $k=1$.
Let $(x,y)=(w(N)^i,w(N)^j)$~$(i\ne j)$.
Then 
\begin{eqnarray*}
\lefteqn{|D_h\|C(x,y)\|_{m,\theta}^m|}\nonumber\\
& &=
m\left|\int_0^1\int_0^t
\frac{\left(C(h(N)^i,w(N)^j)_{s,t}+
C(w(N)^i,h(N)^j)_{s,t}
\right)\left(
C(x,y)_{s,t}^{m-1}
\right)}{(t-s)^{2+m\theta}}dsdt\right|\nonumber\\
& &\le m\left(\|C(h(N)^i,w(N)^j)\|_{m,\theta}+
\|C(w(N)^i,h(N)^j)\|_{m,\theta}\right)
\|C(x,y)\|_{m,\theta}^{m-1}\nonumber\\
& &\le C_{m,\theta}\left(\|w(N)^i\|_{m,\theta/2}\|h(N)^j\|_H+
\|w(N)^j\|_{m,\theta/2}\|h(N)^i\|_H\right)
\|C(x,y)\|_{m,\theta}^{m-1},
\end{eqnarray*}
where we have applied Lemma~\ref{pointwise estimate}~(2)
in the case where $m(1-\theta)>2$.
This implies (\ref{DCxy}).
We can check the other cases in similar ways.
\end{proof}

\section{A Poincar\'e's lemma on a certain domain in a Wiener space}

The reader may find the following statement in 
Remark~3.2 in \cite{aida-wpoincare}.
We apply this lemma to Dirichlet forms 
on open subsets in Euclidean spaces.
For the sake of completeness, we give the proof.

\begin{lem}\label{general PI}
Let $(X,\mu)$ and $(Y,\nu)$ be probability spaces.
Let 
$dm=d\mu\otimes d\nu$.
Assume that we are given Dirichlet forms 
$({\cal E}_X, {\rm D}({\cal E}_X))$,
$({\cal E}_Y, {\rm D}({\cal E}_Y))$ on 
$L^2(X,\mu)$ and $L^2(Y,\nu)$.
Moreover we assume that
${\cal E}_X$, ${\cal E}_Y$ has the square field operators
$\Gamma_X$ and $\Gamma_Y$ respectively.
Let $U$ be a measurable subset of $X\times Y$
with $m(U)>0$.
Let $U_x=\{y\in Y~|~(x,y)\in U\}$
and $U^y=\{x\in X~|~(x,y)\in U\}$.
Let $A=\{x\in X~|~\nu(U_x)>0\}$ and
$B=\{y\in Y~|~\mu(U^y)>0\}$.
We assume that

\noindent
$(1)$~There exists $\tilde{A}\subset A$ such that
$\mu(A\setminus \tilde{A})=0$ and
$\delta=\inf_{x,x'\in \tilde{A}}\nu\left(U_x\cap U_{x'}\right)
>0$. Moreover there exists a positive number $C_2$
such that for any $x\in \tilde{A}$ and $g\in {\rm D}({\cal E}_Y)$,
\begin{equation}
\Var(g; U_x)\le
\frac{C_2}{\nu(U_x)}\int_{U_x}\Gamma_Y g(y)d\nu(y).
\end{equation}
Here $\Var(g; U_x)$ denotes the variance of $g$ with respect
to the probability measure $d\nu|_{U_x}/\nu(U_x)$.
In the statement below too, we use $\Var$ in this sense.

\noindent
$(2)$~There exists $\tilde{B}\subset B$ such that
$\nu(B\setminus \tilde{B})=0$ and
there exists a positive number $C_1$
such that for any $y\in \tilde{B}$ and $h\in {\rm D}({\cal E}_X)$
\begin{equation}
\Var(h; U^y)
\le
\frac{C_1}{\mu(U^y)}\int_{U^y}\Gamma_X h(x)d\mu(x).
\end{equation}
Let us denote $z=(x,y)\in X\times Y$.
Then we have for $f=f(z)=f(x,y)$,
\begin{equation}
\Var(f; U)
\le
\frac{3}{\delta m(U)}\int_{U}
\left(\frac{C_1}{m(U)}\Gamma_Xf(x,y)+
C_2\Gamma_Yf(x,y)
\right)
dm(z).
\end{equation}
\end{lem}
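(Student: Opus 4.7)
The plan is to express the variance as the symmetric double integral
\[
\Var(f; U) = \frac{1}{2\, m(U)^{2}} \int_{U}\int_{U} \bigl(f(z) - f(z')\bigr)^{2}\,dm(z)\,dm(z'),
\]
and to route any two points $z = (x,y), z' = (x',y') \in U$ with $x, x' \in \tilde A$ through a common intermediate ordinate $y'' \in V(x,x') := U_{x} \cap U_{x'}$. Assumption (1) enters decisively here through the uniform overlap bound $\nu(V(x,x'))\ge \delta$, which makes it legitimate to average over $y''$.

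First I would use the three-term telescoping
\[
f(x,y) - f(x',y') = \bigl[f(x,y) - f(x,y'')\bigr] + \bigl[f(x,y'') - f(x',y'')\bigr] + \bigl[f(x',y'') - f(x',y')\bigr],
\]
square both sides, apply $(a+b+c)^{2} \le 3(a^{2}+b^{2}+c^{2})$, and average $y''$ over $V(x,x')$ against its uniform probability measure. Using $\nu(V(x,x')) \ge \delta$ produces
\[
(f(z) - f(z'))^{2} \le \frac{3}{\delta}\int_{V(x,x')} \bigl\{J_{1}+J_{2}+J_{3}\bigr\}\,d\nu(y''),
\]
where $J_{1}, J_{3}$ are pure $Y$-square differences at fixed first coordinate and $J_{2}$ is a pure $X$-square difference at fixed $y''$.

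The second step is to integrate against $dm\otimes dm$ and invoke the slice Poincar\'e inequalities via Fubini. For $J_{1}, J_{3}$, I enlarge $V(x,x') \subset U_{x}$ (resp.\ $U_{x'}$); the free variable $y'$ integrates to $\nu(U_{x'})$ and then the free variable $x'$ integrates via $\int_{A}\nu(U_{x'})\,d\mu(x') = m(U)$; the remaining $\int_{U_{x}}\int_{U_{x}}(g(y)-g(y''))^{2}\,d\nu\,d\nu = 2\nu(U_{x})^{2}\Var(g; U_{x})$ is bounded by assumption (1) on $\tilde A$, and after $\nu(U_{x})\le 1$ this yields a bound of the form $m(U)\,C_{2}\int_{U}\Gamma_{Y} f\,dm$ for each. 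For $J_{2}$, Fubini moves $y''$ outside with $x, x'\in U^{y''}$; integrating $y, y'$ out using $\nu(U_{x}), \nu(U_{x'})\le 1$ reduces the inner integral to a double integral over $U^{y''}\times U^{y''}$ of an $X$-square difference; on $\nu$-a.e.\ $y'' \in \tilde B$ assumption (2) bounds this by $C_{1}\int_{U^{y''}}\Gamma_{X} f\,d\mu$, and Fubini back gives $C_{1}\int_{U}\Gamma_{X} f\,dm$. Dividing by $2m(U)^{2}$ produces the asymmetric weights $\frac{C_{1}}{m(U)}$ versus $C_{2}$ that appear in the statement.

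The main obstacle is simply the Fubini bookkeeping—carefully tracking the three coupled constraints $y \in U_{x}$, $y' \in U_{x'}$, $y'' \in U_{x}\cap U_{x'}$ through the order swaps and verifying that restricting to $\tilde A$ and $\tilde B$ costs nothing (by $\mu(A\setminus\tilde A) = \nu(B\setminus\tilde B) = 0$) so that the slice Poincar\'e inequalities are applicable on a full measure set. Once that organization is done, the analytic content collapses to the two slice Poincar\'e inequalities together with the overlap constant $\delta$.
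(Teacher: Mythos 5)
Your proposal is correct and follows essentially the same route as the paper's proof: the identical three-term telescoping through an intermediate point of $U_x\cap U_{x'}$, averaged over that intersection using $\nu(U_x\cap U_{x'})\ge\delta$, then the same Fubini bookkeeping combined with the two slice Poincar\'e inequalities and the symmetric double-integral representation of the variance.
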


\begin{proof}
Let $x,x'\in \tilde{A}$,
$y\in U_x, y'\in U_{x'}, z\in U_x\cap U_{x'}$.
Noting that
\begin{eqnarray}
\lefteqn{\left(f(x,y)-f(x',y')\right)^2}\nonumber\\
& &\le
3\Bigl\{
(f(x,y)-f(x,z))^2+
(f(x,z)-f(x',z))^2+
(f(x',z)-f(x',y'))^2
\Bigr\},
\end{eqnarray}
and $\nu\left(U_x\cap U_{x'}\right)>\delta$,
we have
\begin{eqnarray}
\left(f(x,y)-f(x',y')\right)^2&\le&
\frac{3}{\delta}\int_{U_x\cap U_{x'}}
(f(x,y)-f(x,z))^2d\nu(z)\nonumber\\
& &+\frac{3}{\delta}\int_{U_x\cap U_{x'}}
\left(f(x,z)-f(x',z)\right)^2d\nu(z)\nonumber\\
& &+\frac{3}{\delta}\int_{U_x\cap U_{x'}}
\left(f(x',z)-f(x',y')\right)^2d\nu(z)\nonumber\\
&=&I_1+I_2+I_3.
\end{eqnarray}
We estimate $I_i$.
\begin{eqnarray}
\lefteqn{\int_{x,x'\in\tilde{A},y\in U_x,y'\in U_{x'}}
I_1d\mu(x)d\mu(x')d\nu(y)d\nu(y')}\nonumber\\
& &\le\frac{3}{\delta}
\int_{x\in \tilde{A}, y,z\in
U_x}\left(f(x,y)-f(x,z)\right)^2d\nu(y)d\nu(z)
d\mu(x)m(U)\nonumber\\
& &\le \frac{3C_2m(U)}{\delta}\int_{x\in \tilde{A}, y\in U_x}
2\nu(U_x)\Gamma_Yf(x,y)d\nu(y)d\mu(x).
\end{eqnarray}
\begin{eqnarray}
\lefteqn{\int_{x,x'\in \tilde{A}, y\in U_x,y'\in U_{x'}}
I_2d\mu(x)d\mu(x')d\nu(y)d\nu(y')
}\nonumber\\
& &=
\frac{3}{\delta}\int_{x,x'\in \tilde{A}}
\left(\nu(U_x)\nu(U_{x'})\int_{z\in U_x\cap U_{x'}}
\left(f(x,z)-f(x',z)\right)^2d\nu(z)\right)d\mu(x)d\mu(x')\nonumber\\
& &\le \frac{3}{\delta}
\int_{x,x'\in \tilde{A}\cap U^z, z\in Y}
\left\{\left(f(x,z)-f(x',z)\right)^2d\mu(x)d\mu(x')\right\}
d\nu(z)\nonumber\\
& &=\frac{3}{\delta}\int_{x,x'\in U^z, z\in \tilde{B}}
\left\{\left(f(x,z)-f(x',z)\right)^2d\mu(x)d\mu(x')\right\}
d\nu(z)\nonumber\\
& &\le\frac{3}{\delta}
\int_{\tilde{B}}d\mu(z)2C_1\mu(U^z)\int_{U^z}
\Gamma_Xf(x,z)d\mu(x).
\end{eqnarray}
As to $I_3$, we have the same estimate for $I_1$:
\begin{eqnarray}
\lefteqn{\int_{x,x'\in\tilde{A},y\in U_x,y'\in U_{x'}}
I_3d\mu(x)d\mu(x')d\nu(y)d\nu(y')}\nonumber\\
& &\le
\frac{3C_2m(U)}{\delta}\int_{x\in \tilde{A}, y\in U_x}
2\nu(U_x)\Gamma_Yf(x,y)d\nu(y)d\mu(x).
\end{eqnarray}
Since
\begin{eqnarray}
\lefteqn{\int_{x,x'\in\tilde{A},y\in U_x,y'\in U_{x'}}
(f(x,y)-f(x',y'))^2d\mu(x)d\mu(x')d\nu(y)d\nu(y')}\nonumber\\
& &=2m(U)\int_U\left(f(z)-\frac{1}{m(U)}\int_Uf(z)dm(z)\right)^2
dm(z),
\end{eqnarray}
the above estimates complete the proof.
\end{proof}

To apply the lemma above to $U_{r,\varphi}$ which we will define
later, we need uniform positivity of probabilities 
of intersections of subsets of a Wiener space
(Lemma~\ref{PI on convex domain}~(1)).
First we begin by the following.

\begin{lem}\label{positivity}
Let us consider the case where $d=1$.
That is, $w$ is a real valued continuous path.
Let $0<\theta<\theta'<1$ and $m(1-\theta)>2$.
Let $z^1,\ldots,z^l\in W_{m,\theta/2}(\RR)$ and define
\begin{eqnarray*}
\lefteqn{U_N(z^1,\ldots,z^l ; \ep)}\nonumber\\
& &=
\left\{w\in \Omega~ \Big |~ \max_{1\le i\le l}
\left\{\|w(N)\|_{m,\theta'/2}, \|C(w(N),z^i)\|_{m,\theta}, 
\|C(z^i,w(N))
\|_{m,\theta}\right\}<\ep\right\},
\end{eqnarray*}
where $\ep$ is a positive number.
Then for fixed $l$, $r>0$ and $\ep>0$, we have
\begin{equation}
\inf\left\{\mu\left(U_N(z^1,\ldots,z^l ; \ep)\right)~\Big |~
\max_{1\le i\le l}\|z^i\|_{m,\theta'/2}\le r, N\in {\mathbb N}
\right\}>0.\label{small-ball-N}
\end{equation}
\end{lem}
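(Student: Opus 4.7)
The plan is a ``low frequency'' plus ``high frequency'' decomposition. Let $z(k):=w(k)-w(k-1)$ (with $w(0)=0$), so that $(z(k))_{k\ge 1}$ are independent piecewise-linear Gaussian paths and $w(N)=\sum_{k=1}^{N}z(k)$. I would fix cutoffs $N_0=N_0(l,\ep,r)\in\NN$ and $\delta=\delta(\ep,r)>0$ and introduce the two events
\begin{eqnarray*}
A&=&\{w\in\Omega:\|w(N_0)\|_H<\delta\},\\
B&=&\bigcap_{i=1}^{l}\Big\{w\in\Omega:\sum_{k>N_0}\big(\|z(k)\|_{m,\theta'/2}+\|C(z(k),z^i)\|_{m,\theta}+\|C(z^i,z(k))\|_{m,\theta}\big)<\tfrac{\ep}{2}\Big\}.
\end{eqnarray*}
Here $A$ depends only on $(z(1),\ldots,z(N_0))$ while $B$ depends only on $(z(k))_{k>N_0}$, so $A$ and $B$ are independent under $\mu$.

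On $A$, the Cauchy--Schwarz inequality applied to the dyadic increments gives $\|w(N)\|_H\le \|w(N_0)\|_H<\delta$ for every $N\le N_0$. By Lemma~\ref{pointwise estimate}~(2), which is valid at the exponent $\theta'/2$ under $m(1-\theta')>2$ with modified constants, this yields
\[
\|w(N)\|_{m,\theta'/2}\le C\delta,\qquad \|C(w(N),z^i)\|_{m,\theta}+\|C(z^i,w(N))\|_{m,\theta}\le Cr\delta,
\]
for all $N\le N_0$ and $1\le i\le l$. For $N>N_0$ these bounds hold with $w(N_0)$ in place of $w(N)$, and combined with $B$ through the triangle inequality applied to $w(N)=w(N_0)+\sum_{k=N_0+1}^{N}z(k)$ they imply the inequalities defining $U_N(z^1,\ldots,z^l;\ep)$, provided $\delta$ is small enough. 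Since $A$ is a small ball in the finite-dimensional Gaussian space $\Omega_{N_0}$, $\mu(A)=c(N_0,\delta)>0$, independent of $N$ and of the $z^i$.

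The technical heart is the uniform moment estimate
\[
E\big[\|C(z(k),z^i)\|_{m,\theta}^{m}\big]^{1/m}\le Cr\,2^{-\alpha k}\qquad (\alpha>0),
\]
valid for every $\|z^i\|_{m,\theta'/2}\le r$, together with the analogous bounds for $C(z^i,z(k))$ and the estimate $E[\|z(k)\|_{m,\theta'/2}^{m}]^{1/m}\le C\,2^{-\alpha k}$ already established in the proof of Theorem~\ref{Omega}. The naive bound from Lemma~\ref{pointwise estimate}~(2) is useless because $E[\|z(k)\|_H^2]\sim 2^{k-1}$ blows up. Instead one writes $z(k)=\sum_j\alpha_j h_{k,j}$ in its hat-function basis at dyadic level $k$, where $\alpha_j$ are independent centered Gaussians of variance $2^{-(k+1)}$. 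Then $C(z(k),z^i)_{s,t}$ belongs to the first Gaussian chaos in $(\alpha_j)$, so hypercontractivity reduces the $L^m$-moments to the variance $(2^{k+1})^{-1}\sum_j C(h_{k,j},z^i)_{s,t}^2$. An integration-by-parts bound $|C(h_{k,j},z^i)_{s,t}|\le C\|z^i\|_{H,\alpha_0}\min(2^{-k\alpha_0},2^k(t-s)^{1+\alpha_0})$, with $\alpha_0$ chosen slightly below $\theta'/2-1/m$ and $\|z^i\|_{H,\alpha_0}\le C\|z^i\|_{m,\theta'/2}$ by the Garsia--Rodemich--Rumsey inequality, inserted into the Besov double integral $\iint\cdot\,(t-s)^{-(2+m\theta)}ds\,dt$ and optimised at the crossover $t-s\sim 2^{-k}$ produces the claimed exponential decay. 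Markov's inequality and summability then yield $\mu(B)\ge 1/2$ once $N_0$ is chosen sufficiently large in terms of $l,\ep,r$, uniformly in $N$ and in $(z^1,\ldots,z^l)$.

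Combining everything, for every $N\in\NN$ and every admissible $(z^1,\ldots,z^l)$,
\[
\mu(U_N(z^1,\ldots,z^l;\ep))\ge \mu(A\cap B)=\mu(A)\,\mu(B)\ge \tfrac12\,c(N_0,\delta)>0,
\]
which is the desired uniform lower bound~(\ref{small-ball-N}). The main obstacle is the exponential-in-$k$ moment bound on $\|C(z(k),z^i)\|_{m,\theta}$ uniform in $z^i$: the straightforward Lemma~\ref{pointwise estimate}~(2) route fails, and one must replace the pathwise $\|z(k)\|_H$ factor by a first-chaos variance in $(\alpha_j)$, then carefully match the Hölder exponent available for $z^i$ (via Garsia--Rodemich--Rumsey) against the Besov exponents $\theta,\theta'$ so that the double integral in $(s,t)$ converges with decay in~$k$.
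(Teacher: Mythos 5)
Your route is genuinely different from the paper's and, up to two caveats noted below, it closes. The paper argues by contradiction and never produces quantitative decay: for fixed $N$ it uses the compact embedding $W_{m,\theta'/2}(\RR)\hookrightarrow W_{m,\theta/2}(\RR)$ together with the continuity in $z^i$ furnished by Lemma~\ref{L^m estimate} to show that a minimizing sequence $z^{i,n}$ would force $\mu(U_N(y^1,\ldots,y^l;\ep/2))=0$ for a limit $y^i$; for the uniformity in $N$ it observes that $(w,B(w,z^i),B(z^i,w))$ is a centered Gaussian on a separable Banach space, so every ball has positive mass, and extracts a second contradiction via the same compactness. Your argument is instead constructive: the dyadic split $w(N)=w(N_0)+\sum_{N_0<k\le N}z(k)$, a finite--dimensional Gaussian small--ball event $A$, and a summable tail event $B$ whose probability is controlled by the uniform moment estimate $E[\|C(z(k),z^i)\|_{m,\theta}^m]^{1/m}\lesssim r\,2^{-\alpha k}$. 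That estimate is the genuinely new ingredient --- the paper's Lemma~\ref{L^m estimate} gives only boundedness, not decay in $k$ --- and you correctly observe that the naive route through Lemma~\ref{pointwise estimate}~(2) fails because $E[\|z(k)\|_H^2]\sim 2^{k-1}$; passing to the variance in the wavelet coefficients and using Gaussianity is the right fix, and the independence of $A$ and $B$ then closes the argument. In exchange for the extra work you obtain an in--principle explicit lower bound for (\ref{small-ball-N}), which the compactness argument cannot give.

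Two cautions. First, the Hölder exponent should be $\alpha_0=\theta'/2$, not $\approx\theta'/2-1/m$: the Garsia--Rodemich--Rumsey inequality in the paper's normalization yields $\|z^i\|_{H,\theta'/2}\le M_{m,\theta'}\|z^i\|_{m,\theta'/2}$ directly (this is precisely the constant $M_{m,\theta'}$ defined in Section~3), and with $\alpha_0=\theta'/2$ the exponent bookkeeping in the double Besov integral reduces to the single condition $m\bigl(\tfrac12+\tfrac{\theta'}{2}-\theta\bigr)>1$, which holds because $\theta'>\theta$ and $m(1-\theta)>2$; with the smaller $\alpha_0$ you quote the inequality can fail near the boundary of the hypotheses. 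Second, the steps $\|w(N)\|_{m,\theta'/2}\le\|w(N)\|_H$ and $E[\|z(k)\|_{m,\theta'/2}^m]\lesssim 2^{-\alpha m k}$ require $m(1-\theta')>2$, which is stronger than the lemma's stated hypothesis $m(1-\theta)>2$ and is precisely what the paper's compactness proof avoids needing. Under the paper's standing choice $m(1-\theta')>4$ this is harmless, but as written your argument proves a slightly more restrictive version of the lemma than stated.
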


For later use, we denote the infimum in (\ref{small-ball-N})
by $C(l,\ep,r,m,\theta,\theta')$.

To prove the lemma above, we need a lemma.
Let $x$ be a real-valued continuous function 
on $[0,1]$ and $w$ be the $1$-dimensional Brownian motion. 
Then the stochastic integral (Wiener integral)
$B(x,w)$
is defined for almost all $w$ as continuous functions of
$(s,t)\in \Delta$:
\begin{eqnarray}
B(x,w)_{s,t}&=&\int_s^t(x_u-x_s)dw_u.
\end{eqnarray}
Also we set
$B(w,x)_{s,t}=\left(\bar{x}\cdot\bar{w}\right)_{s,t}-B(x,w)_{s,t}$.
As for the notation $\left(\bar{x}\cdot\bar{w}\right)_{s,t}$,
see Lemma~\ref{pointwise estimate}~(1).
For these stochastic integrals, we have the following estimates.

\begin{lem}\label{L^m estimate}
Assume $m(1-\theta)>2$.
Stochastic integrals $B(x,w), B(w,x)$ take values in $W_{m,\theta/2}$
for almost all $w$ and
\begin{eqnarray}
E\left[\|B(x,w)\|_{m,\theta}^m+
\|B(w,x)\|_{m,\theta}^m\right]&\le&
 C_{m,\theta}\|x\|_{m,\theta/2}^m.\label{estimates for stochastic integrals}
\end{eqnarray}
Also we have
\begin{equation}
\lim_{N\to\infty}
E\left[\|C(x,w(N))-B(x,w)\|_{m,\theta}^m+
\|C(w(N),x)-B(w,x)\|_{m,\theta}^m\right]=0.\label{estimates for remainders}
\end{equation}
\end{lem}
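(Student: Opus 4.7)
The plan is to exploit the Gaussian nature of Wiener integrals together with the Besov-type norms already developed in Lemma~\ref{pointwise estimate}. For Part 1, because $x$ is deterministic, the random variables $B(x,w)_{s,t}$ and $(\bar{x}\cdot\bar{w})_{s,t}$ are Gaussian, so every $L^m$ moment reduces to a power of a variance. For Part 2 the key observation is that $w = w(N) + w(N)^{\perp}$ is an orthogonal decomposition into two independent Gaussian processes; this expresses the error $C(x,w(N))-B(x,w)$ as a single centered Gaussian whose variance can be estimated via the oscillation of $x$ at dyadic scale $2^{-N}$.

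For Part 1, I would first use the Wiener isometry $E[B(x,w)_{s,t}^2]=\int_s^t(x_u-x_s)^2\,du$. The standing assumption $m(1-\theta)>2$ ensures $M_{m,\theta}<\infty$ and hence the H\"older-type bound $|x_u-x_s|\le M_{m,\theta}\|x\|_{m,\theta/2}|u-s|^{\theta/2}$, so the variance is at most $C_{m,\theta}\|x\|_{m,\theta/2}^2(t-s)^{1+\theta}$. Gaussianity then gives $E[|B(x,w)_{s,t}|^m]\le C'_{m,\theta}\|x\|_{m,\theta/2}^m(t-s)^{m(1+\theta)/2}$, and integrating this against $(t-s)^{-2-m\theta}$ over $\Delta$ is finite precisely when $m(1-\theta)>2$. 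For $B(w,x) = \bar{x}\cdot\bar{w} - B(x,w)$ I would estimate the extra term via $E[|(\bar{x}\cdot\bar{w})_{s,t}|^m] = C_m |x_t-x_s|^m (t-s)^{m/2}$, noting that after dividing by $(t-s)^{2+m\theta}$ the remaining weight $(t-s)^{m/2-m\theta-2}$ is dominated by $(t-s)^{-2-m\theta/2}$ on $\Delta$ (since $\theta\le 1$ and $t-s\le 1$), so the expectation is again controlled by $\|x\|_{m,\theta/2}^m$.

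For Part 2, since $w(N)$ has bounded variation, the Riemann--Stieltjes integral $C(x,w(N))_{s,t}$ agrees a.s.\ with the Wiener integral $B(x,w(N))_{s,t}$, and by independence of $w(N)$ and $w(N)^{\perp}$ one has
\[
C(x,w(N))_{s,t} - B(x,w)_{s,t} = -B\bigl(x,w(N)^{\perp}\bigr)_{s,t}.
\]
This is a centered Gaussian whose variance equals the squared $L^2([0,1])$-distance from $f(u) = (x_u-x_s)1_{[s,t]}(u)$ to the subspace of functions constant on each dyadic interval $I_k=[k/2^N,(k+1)/2^N]$. Using the H\"older bound on $x$, this variance is controlled on the one hand by $C_{m,\theta}\|x\|_{m,\theta/2}^2(t-s)\,2^{-N\theta}$ (from the oscillation of $x$ on each $I_k$ being at most $C\|x\|_{m,\theta/2}2^{-N\theta/2}$), and on the other by the trivial bound $C_{m,\theta}\|x\|_{m,\theta/2}^2(t-s)^{1+\theta}$. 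Raising to the $m/2$ power, dividing by $(t-s)^{2+m\theta}$, and splitting $\Delta$ at $t-s=2^{-N}$, both pieces exhibit polynomial decay in $2^{-N}$ under $m(1-\theta)>2$. For $C(w(N),x)-B(w,x)$ I would use the identity $-\bar{x}\cdot\overline{w(N)^{\perp}} + B(x,w(N)^{\perp})$; the first term has $L^m$-norm bounded by $M_{m,\theta}\|x\|_{m,\theta/2}\|w(N)^{\perp}\|_{m,\theta/2}$ via part~(1) of Lemma~\ref{pointwise estimate}, which vanishes by the estimate~(\ref{wNperp}).

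The main obstacle is the two-regime integral in Part 2: each of the two variance bounds integrates well only on one side of the scale $t-s\sim 2^{-N}$, and matching them requires care so that the same threshold $m(1-\theta)>2$ governs both regimes --- this is precisely the reason this exponent appears throughout the paper.
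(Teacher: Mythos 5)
Your proposal is correct and follows essentially the same route as the paper: Part 1 is the Gaussian moment reduction $E[|B(x,w)_{s,t}|^m]\le C_m\bigl(\int_s^t(x_u-x_s)^2du\bigr)^{m/2}$ integrated against the $\|\cdot\|_{m,\theta}$ weight under $m(1-\theta)>2$, and Part 2 rests on the same identification $C(x,w(N))_{s,t}-B(x,w)_{s,t}=-B(x,w(N)^{\perp})_{s,t}$ coming from the independent splitting $w=w(N)+w(N)^{\perp}$, together with the product estimate of Lemma~\ref{pointwise estimate}~(1) and (\ref{wNperp}) for the second difference. The only divergence is at the end of Part 2: the paper avoids your two-regime estimate at the dyadic scale by simply noting that the variance of $B(x,w(N)^{\perp})_{s,t}$ is dominated by that of $B(x,w)_{s,t}$ (orthogonality of the projection) and tends to $0$ pointwise, so dominated convergence gives (\ref{estimates for remainders}) without a rate; your explicit bound $\|f-P_Nf\|_{L^2}^2\le C\|x\|_{m,\theta/2}^2\min\bigl((t-s)^{1+\theta},(t-s)2^{-N\theta}\bigr)$ is correct (the boundary dyadic intervals are absorbed in the regime $t-s\ge 2^{-N}$) and yields a polynomial rate in $2^{-N}$, which is more than the lemma requires.
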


\begin{proof}
We have
\begin{eqnarray*}
E\left[\int_0^1\int_0^t\frac{B(x,w)_{s,t}^m}{
|t-s|^{2+m\theta}}dsdt\right]
&=&
C_m\int_0^1\int_0^t\frac{\left(\int_s^t(x_u-x_s)^2du\right)^{m/2}}
{(t-s)^{2+m\theta}}dsdt\nonumber\\
&\le&
C_m\int_0^1\int_0^t\frac{(t-s)^{\frac{m}{2}-1}\int_s^t(x_u-x_s)^{m}du}
{(t-s)^{2+m\theta}}dsdt\nonumber\\
&\le&
C_m\int_0^1\int_0^t\frac{\int_s^t(x_u-x_s)^{m}du}{(t-s)^{2+m\theta/2}}dsdt
\nonumber\\
&\le&C_m\|x\|_{m,\theta/2}^m.
\end{eqnarray*}
Noting that
$B(w,x)_{s,t}=(w_t-w_s)(x_t-x_s)-B(x,w)_{s,t}$ and
$$
E\left[\int_0^1\int_0^t
\frac{(w_t-w_s)^m}{(t-s)^{2+m\theta/2}}dsdt
\right]<\infty
$$
we complete the proof of (\ref{estimates for stochastic integrals}).
We prove (\ref{estimates for remainders}).
We have
\begin{eqnarray}
\lefteqn{\left\|\|C(x,w(N))-B(x,w)\|_{m,\theta}^m\right\|_{L^2(\mu)}}
\nonumber\\
& &\le
\iint_{\{(s,t)\in \Delta\}}
\frac{E\left[\left(C(x,w(N))_{s,t}-B(x,w)_{s,t}\right)^{2m}\right]^{1/2}}
{(t-s)^{2+m\theta}}dsdt.\label{CxwN}
\end{eqnarray}
Note that
\begin{eqnarray*}
E\left[\left(C(x,w(N))_{s,t}-B(x,w)_{s,t}\right)^{2m}\right]^{1/2}
&\le&C_m\psi_N(s,t)
\end{eqnarray*}
where $\psi_N(s,t)=E
\left[\left(C(x,w(N))_{s,t}-B(x,w)_{s,t}\right)^{2}\right]^{m/2}$.
Also 
\begin{equation*}
\psi_N(s,t)\le E\left[B(x,w)_{s,t}^2\right]^{m/2}=:\psi(s,t).
\end{equation*}
This follows from that $w-w(N)$ and $w(N)$ are independent.
It holds that $\lim_{N\to\infty}\psi_N(s,t)=0$ for all
$(s,t)$ and
$\iint_{\Delta}\frac{\psi(s,t)}{(t-s)^{2+m\theta}}dsdt<\infty$.
Hence the Lebesgue dominated convergence theorem implies 
that the quantity on the right-hand side of (\ref{CxwN}) converges to $0$.
For the other term,
it suffices to note that
$C(w(N),x)-B(w,x)=B(x,w)-C(x,w(N))+
\bar{x}\cdot\bar{w(N)}-\bar{x}\cdot\bar{w}$
and $\lim_{N\to\infty}E[\|w(N)-w\|_{m,\theta/2}^m]=0$.
\end{proof}

\begin{proof}[Proof of Lemma~$\ref{positivity}$]
First we prove that for any $N$,
\begin{equation}
\ep_N:=\inf\left\{\mu\left(U_N(z^1,\ldots,z^l ; \ep)\right)~\Big |~
\max_{1\le i\le l}\|z^i\|_{m,\theta'/2}\le r
\right\}>0.\label{lowerboundN}
\end{equation}
Note that for any $z^1,\ldots,z^l\in W_{m,\theta/2}(\RR)$,
\begin{equation}
\mu\left(U_N(z^1,\ldots,z^l ; \ep)\right)>0.
\end{equation}
If (\ref{lowerboundN}) does not hold, then we can find a sequence
$\{z^{i,n}\}$ such that
$\sup_{i,n}\|z^{i,n}\|_{m,\theta'/2}\le r$
$\lim_{n\to\infty}\mu(U_N(z^{1,n},\ldots,z^{l,n} ; \ep))=0$.
Since the embedding $W_{m,\theta'/2}(\RR)\subset W_{m,\theta/2}(\RR)$
is compact, there exists a subsequence $\{z^{i,n(k)}\}$ and
$\{y^i\}\subset W_{m,\theta/2}(\RR)$ such that
$\lim_{k\to\infty}\|z^{i,n(k)}-y^i\|_{m,\theta/2}=0$.
By Lemma~\ref{L^m estimate} and 
$E[\|C(x,w(N))\|_{m,\theta}^m]\le E[\|B(x,w)\|_{m,\theta}^m]$
and so on,
$$
\lim_{k\to\infty}E\left[
\|C(w(N),z^{i,n(k)})-C(w(N),y^i)\|_{m,\theta}+
\|C(z^{i,n(k)},w(N))-C(y^i,w(N))\|_{m,\theta}\right]=0.
$$
This implies that $\mu\left(U_N(y^1,\ldots,y^l ; \ep/2)\right)=0$
which is a contradiction.
Next we prove that $\liminf_{N\to\infty}\ep_N>0$.
The random variable $(w, B(w,z^i),B(z^i,w))$ defines a Gaussian measure
with mean $0$ on the separable Banach space
$$
W_{m,\theta'/2}(\RR)\times \prod_{i=1}^{2l}
L_{m,\theta}(\Delta\to \RR).
$$
Therefore every ball of positive radius has positive measure.
See \cite{b}.
Thus we obtain for any $\ep>0$ and $\{z^i\}_{i=1}^l\subset 
W_{m,\theta/2}(\RR)$,
\begin{equation}
\mu(U(z^1,\ldots,z^l ; \ep))
>0,\label{small-ball-infty}
\end{equation}
where
\begin{equation}
U(z^1,\ldots,z^l ; \ep)=
\left\{w\in W_{m,\theta'/2}(\RR)~\Big |~ \max_{1\le i\le l}\left\{
\|w\|_{m,\theta'/2}, \|B(w,z^i)\|_{m,\theta}, \|B(z^i,w)
\|_{m,\theta}\right\}< \ep\right\}.
\end{equation}

Now suppose that there exist 
$\{z^{i,N}\}\subset W_{m,\theta'/2}({\mathbb R})$
with $\sup_{i,N}\|z^{i,N}\|_{m,\theta'/2}<r$
and
$$
\lim_{N\to\infty}\mu\left(U_N(z^{1,N},\ldots,z^{l,N} ; \ep)\right)=0.
$$
We may assume that there exists $y^i\in W_{m,\theta/2}(\RR)$ such that
$\lim_{N\to\infty}\|z^{i,N}-y^i\|_{m,\theta/2}=0$.
We have
\begin{eqnarray}
C(w(N),z^{i,N})&=&C(w(N),z^{i,N}-y^i)+B(w,y^i)-
\left(B(w,y^i)-C(w(N),y^i)\right).
\end{eqnarray}
Also the $\|~\|_{m,\theta}$ norms of
$C(w(N), z^{i,N}-y^i)$ and $B(w,y^i)-C(w(N),y^i)$ converge to
$0$ in probability by Lemma~\ref{L^m estimate}.
This shows $\mu(U(y^1,\ldots,y^l;\ep/2))=0$
which is a contradiction and we have proved that $\inf_N\ep_N>0$.
\end{proof}

The following lemma will be applied to
the set $U_k(\xi^{k+1},\ldots,\xi^d,\eta)
_{(\xi^1,\ldots,\xi^{k-1})}$
which is defined in (\ref{convex-section}).

\begin{lem}\label{PI on convex domain}
Let $d=1$.
That is, we consider the case where $w\in \Omega$ and $\xi\in \Omega_N$
are real-valued functions on $[0,1]$.
Let $0<\theta<\theta'<1$ and $m(1-\theta)>2$.
Let 
$x\in W_{m,\theta'/2}(\RR)$,
$y^1,\ldots,y^{2l}\in W_{m,\theta'/2}(\RR)$
and $z^1,\ldots, z^{2l}\in W_{m,\theta}(\Delta\to \RR)$.
Let $r$ be a positive number and $0<\delta<1$.
Suppose that 
$\|x\|_{m,\theta'/2}<\delta r$ and
$\max_{1\le i\le 2l}\|z^{i}\|_{m,\theta}<\delta r$.
Let us consider a bounded open subset of $\Omega_N$,
\begin{eqnarray}
\lefteqn{U_{N}(\{y^i\}_{i=1}^{2l},\{z^i\}_{i=1}^{2l},x)}\nonumber\\
& &=
\Bigl\{
\xi\in \Omega_N~\Big|~\|\xi+x\|_{m,\theta'/2}<r,
\max_{1\le i\le l}\|C(\xi,y^i)+z^i\|_{m,\theta}<r,\nonumber\\
& &\qquad\qquad \qquad \max_{1\le i\le l}\|C(y^{i+l},\xi)+z^{i+l}\|_{m,\theta}<r
\Bigr\}.
\end{eqnarray}

\noindent
$(1)$~
It holds that for any $C>0$
\begin{eqnarray}
\inf\left\{
\mu(U_N(\{y^i\}_{i=1}^{2l}, \{z^i\}_{i=1}^{2l},x))~\Big|~
\max_{1\le i\le 2l}\|y^i\|_{m,\theta'/2}\le C, N\in {\mathbb N}
\right\}>0.
\end{eqnarray}

\noindent
$(2)$~
Let $W^1(U_N(\{y^i\},\{z^i\},x),\mu_N)$ be the Sobolev space
which consists of $L^2$-functions with respect to
$\mu_N$ on
$U_N(\{y^i\},\{z^i\},x)$ whose weak derivatives are
in $L^2(\mu_N)$.
This set coincides with $W^1(U_N(\{y^i\},\{z^i\},x))$
which is usual Sobolev spaces whose derivatives are in $L^2$
with respect to the Lebesgue measure.
Moreover there exists a bounded linear operator 
$(\mbox{extension operator})$
$T : W^1(U_N(\{y^i\},\{z^i\},x),\mu_N)\to
W^1(\Omega_N,\mu_N)$ such that 
$Tf|_{U_N(\{y^i\},\{z^i\},x)}=f$.

\noindent
$(3)$~
It holds that for any $f\in W^1(U_N(\{y^i\},\{z^i\},x),\mu_N)$,
\begin{eqnarray}
\Var(f ; U_N(\{y^i\},\{z^i\},x))&\le&
\int_{U_N(\{y^i\},\{z^i\},x)}|Df(\xi)|_H^2d
\mu_{N,U_N(\{y^i\},\{z^i\},x)}(\xi).
\end{eqnarray}
where $\mu_{N,U_N(\{y^i\},\{z^i\},x)}$
is the normalized probability measure of
$\mu_N$ on $U_N(\{y^i\},\{z^i\},x)$.
\end{lem}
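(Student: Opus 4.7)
The plan is to treat the three parts in turn, reducing each to a standard structural fact: Part (1) to Lemma~\ref{positivity}, Part (2) to the smoothness of the Gaussian density on the finite-dimensional space $\Omega_N$, and Part (3) to log-concavity of the conditioned measure.

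For Part (1), I would use that the maps $\xi \mapsto C(\xi, y^i)$ and $\xi \mapsto C(y^{i+l}, \xi)$ are linear on $\Omega_N$, since piecewise linear paths are of bounded variation and the iterated integrals can be computed via ordinary integration by parts. By the triangle inequality together with the hypotheses $\|x\|_{m,\theta'/2}<\delta r$ and $\max_i\|z^i\|_{m,\theta}<\delta r$, every $\xi\in\Omega_N$ satisfying $\|\xi\|_{m,\theta'/2}<(1-\delta)r$, $\max_i\|C(\xi,y^i)\|_{m,\theta}<(1-\delta)r$, and $\max_i\|C(y^{i+l},\xi)\|_{m,\theta}<(1-\delta)r$ lies in $U_N(\{y^i\},\{z^i\},x)$. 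The $\mu_N$-measure of this smaller set is bounded below uniformly in $N$ by Lemma~\ref{positivity} applied with $\ep=(1-\delta)r$, which yields (1).

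For Part (2), I note that $\Omega_N\cong\RR^{2^Nd}$ and $\mu_N$ is a non-degenerate Gaussian with smooth, strictly positive density with respect to Lebesgue measure. The bound $\|\xi+x\|_{m,\theta'/2}<r$ together with $\|x\|_{m,\theta'/2}<\delta r$ forces $U_N$ to be bounded, so the density is bounded above and bounded away from zero on $U_N$; hence $W^1(U_N,\mu_N)=W^1(U_N,dx)$ with equivalent norms. Each of the defining inequalities is of the form $\|L\xi+c\|_{*}<r$ with $L$ linear and $c$ a constant, so $U_N$ is an intersection of sublevel sets of convex functions, hence a bounded convex open set. Bounded convex open sets in finite dimensions have Lipschitz boundary, and the Calder\'on--Stein extension theorem supplies the bounded operator $T$.

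For Part (3), I would put the $H$-inner product on the finite-dimensional subspace $\Omega_N\subset H$. In these coordinates $\mu_N$ is the standard Gaussian (the Wiener measure restricted to a finite-dimensional subspace of $H$ in $H$-orthonormal coordinates is a standard Gaussian), and $|Df|_H$ is the Euclidean gradient norm. On the bounded convex set $U_N$ the conditioned measure $\mu_N|_{U_N}$ is strongly log-concave with Hessian of $-\log(\mathrm{density})$ equal to the identity on the interior, so either the Brascamp--Lieb inequality (approximating $\mathbf{1}_{U_N}$ by smooth log-concave potentials) or Caffarelli's contraction theorem (which yields a $1$-Lipschitz transport from the standard Gaussian to $\mu_N|_{U_N}$) gives the Poincar\'e inequality with constant exactly $1$. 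The main obstacle is verifying the convexity of $U_N$; once that is in hand, the constant is manifestly dimension-free, which is precisely the uniformity in $N$ that will be needed later to pass to an infinite-dimensional statement.
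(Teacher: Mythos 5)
Your proposal is correct and tracks the paper's (very terse) proof closely: Part (1) is reduced by a triangle-inequality shrinkage to the set appearing in Lemma~\ref{positivity}, Part (2) rests on $U_N$ being a bounded convex open subset of $\Omega_N$, and Part (3) invokes the dimension-free Poincar\'e inequality for a standard Gaussian restricted to a convex set. Your identification of $\mu_N$ as standard Gaussian in $H$-orthonormal coordinates on $\Omega_N$ (via the L\'evy--Ciesielski/Schauder basis) is exactly the observation that makes $|Df|_H$ the Euclidean gradient and the constant dimension-free. The only genuine divergence is your appeal to Brascamp--Lieb or Caffarelli's contraction theorem for the Gaussian Poincar\'e inequality on a convex domain, whereas the paper cites Feyel--\"Ust\"unel \cite{fu}; these are different routes to the same fact and the choice is a matter of taste, though Caffarelli's theorem is a heavier tool than is needed and the direct Brascamp--Lieb argument (or \cite{fu}) is more economical. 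One small slip: since the lemma explicitly assumes $d=1$, $\Omega_N\cong\RR^{2^N}$ rather than $\RR^{2^N d}$, but this is immaterial.
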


\begin{proof}
Part (1) follows from Lemma~\ref{positivity}.
while (2) follows from the fact that
$U_N(\{y^i\},\{z^i\},x)$ is a bounded convex domain of
$\Omega_N$. 
Then Part (3) follows from the result in (2) and
the Poincar\'e inequality on a convex domain
in a Euclidean space with a Gaussian measure
~(\cite{fu}).
\end{proof}

From now on, 
we fix parameters $m,\theta,\theta'$ as follows.

\begin{assumption}
Let us fix $m,\theta,\theta'$ such that
$m(1-\theta')>4$ and $2/3<\theta<\theta'<1$.
\end{assumption}

Let $\varphi=\varphi_t
=(\varphi^1_t,\ldots,\varphi^d_t)$~$(0\le t\le 1)$
be an element of $H$ and define
\begin{eqnarray}
\lefteqn{U_{r,\varphi}}\nonumber\\
& &=
\Bigl\{
w\in \Omega~\Big |~
\max_{1\le i\le d}\|w^i\|_{m,\theta'/2}< r,
\max_{1\le j<k\le d}\|C(w^j,w^k)\|_{m,\theta}< r,
\max_{1\le i\le j\le d}\|C(\varphi^i,w^j)\|_{m,\theta}<r,\nonumber\\
& &\qquad\sup_{1\le i\le j\le d}\|C(w^i,\varphi^j)\|_{m,\theta}<r
\Bigr\},\label{nbd of 0}
\end{eqnarray}
and
\begin{eqnarray}
\lefteqn{U_r(\varphi)}\nonumber\\
& &=
\Bigl\{
w\in \Omega~\Big |~
\max_{1\le i\le d}\|w^i-\varphi^i\|_{m,\theta'/2}< r,
\max_{1\le j<k\le d}\|C(w^j-\varphi^j,w^k-\varphi^k)
\|_{m,\theta}< r,
\nonumber\\
& &\qquad \max_{1\le i\le j\le d}
\|C(\varphi^i,w^j-\varphi^j)\|_{m,\theta}<r,
\max_{1\le i\le j\le d}
\|C(w^i-\varphi^i,\varphi^j)\|_{m,\theta}<r
\Bigr\}.\label{nbd of varphi}
\end{eqnarray}
Although these sets are different from the metric ball
in the metric space $(\Omega,d_{\Omega})$,
these play a similar kind of
role of the balls in normed linear spaces.
Note that we have the following relation:
\begin{eqnarray}
U_{r}(\varphi)
=\left\{w+\varphi~|~w\in U_{r,\varphi}\right\}.
\end{eqnarray}
The strict positivity of the measures of these subsets
for any $r>0$ and $\varphi\in H$
can be proved by the argument similar to
the proof of Lemma~2.6
in \cite{aida-wpoincare}.
See \cite{lqz} also.

Now we state our Poincar\'e's lemmas.

\begin{thm}\label{a vanishing theorem 0}
Let $\beta\in {\mathbb D}^{\infty,q}(W^d,H^{\ast})\cap L^2(W^d,H^{\ast})$,
where $q>1$.
Suppose that $d\beta=0$ on $U_{r,\varphi}$.
Then for any $r'<r$, there exists $g\in {\mathbb D}^{\infty,q}(W^d,\RR)
\cap {\mathbb D}^{1,2}(W^d,\RR)$ such that
$dg=\beta$ on $U_{r',\varphi}$.
\end{thm}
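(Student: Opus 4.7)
The plan is to prove the theorem by finite-dimensional approximation, using the dyadic decomposition $w = w(N) + w(N)^{\perp} = \xi + \eta$ with $\mu = \mu_N \otimes \mu_N^{\perp}$. For each fixed $\eta \in \Omega_N^{\perp}$, consider the slice
$$U_{r,\varphi}^N(\eta) = \{\xi \in \Omega_N : \xi + \eta \in U_{r,\varphi}\},$$
an open subset of the finite-dimensional Gaussian space $(\Omega_N, \mu_N)$. The geometric key is that although $U_{r,\varphi}$ is not $H$-convex, the slice $U_{r,\varphi}^N(\eta)$ is convex in each coordinate $\xi^k$ separately (when the other $\xi^j$'s are fixed). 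Indeed, the constraints $\|w^i\|_{m,\theta'/2}<r$ and those involving $C(\varphi^i, w^j), C(w^i, \varphi^j)$ are linear in each coordinate, while $C(w^j, w^k) = C(\xi^j + \eta^j, \xi^k + \eta^k)$ (for $j<k$) involves at most one of $\xi^j$ or $\xi^k$ as a variable in a given coordinate slice. So each such condition is linear, hence convex, in the free coordinate, placing us in the setting of Lemma~\ref{PI on convex domain}.

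Next I would iterate Lemma~\ref{general PI} over the $d$ coordinates, using Lemma~\ref{PI on convex domain}(3) for the Poincar\'e inequality on each convex slice, and invoking Lemma~\ref{positivity} (precisely, the uniform lower bound $C(l,\ep,r,m,\theta,\theta')$) to control the intersection probabilities $\nu(U_x\cap U_{x'})$ that appear as $\delta$ in Lemma~\ref{general PI}, uniformly in the remaining variables and in $N$. This is the ``Claim 2'' advertised in the paper's outline: one obtains the Poincar\'e inequality
$$\Var(f;\,U_{r,\varphi}^N(\eta)) \le C_0 \int_{U_{r,\varphi}^N(\eta)} |Df|_H^2 \, d\mu_{N,\eta}$$
with a constant $C_0$ independent of both $N$ and $\eta$ (off a $\mu_N^{\perp}$-null set).

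With the dimension-independent Poincar\'e inequality in hand, I would construct $g_N$ as follows. On each convex slice the classical Poincar\'e lemma for star-shaped finite-dimensional domains yields a local primitive $g_N(\cdot,\eta)$ of $\beta$ restricted to the slice (for example by the Koszul-type formula $g_N(\xi,\eta)=\int_0^1 \langle \beta(\xi_0+t(\xi-\xi_0),\eta),\,\xi-\xi_0\rangle\, dt$ around a center $\xi_0$). Extend to $\Omega_N$ via the extension operator of Lemma~\ref{PI on convex domain}(2) and normalize $g_N$ to have mean zero on $U_{r,\varphi}^N(\eta)$. The above Poincar\'e inequality controls $\|g_N\|_{L^2}$ by $\|\beta\|_{L^2}$; applying the same inequality to each $H$-derivative of $\beta$ and using the assumption $\beta\in{\mathbb D}^{\infty,q}$ gives uniform ${\mathbb D}^{k,q}$ bounds on $g_N$ over $U_{r',\varphi}$ for every $k$ and every $r'<r$.

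Finally I would extract a weak subsequential limit $g_{N_j}\rightharpoonup g$ in ${\mathbb D}^{1,2}\cap{\mathbb D}^{k,q}$. The identity $dg=\beta$ on $U_{r',\varphi}$ follows from passing to the limit in $dg_N=\beta$ on $U_{r,\varphi}^{N}$, using the strong convergence $w(N)\to w$ and $C(w(N),w(N))\to C(w,w)$ from Theorem~\ref{Omega}; the strict inequality $r'<r$ provides the margin needed to ensure that every point of $U_{r',\varphi}$ eventually lies in the approximating domain. The main obstacle is the dimension-independent Poincar\'e inequality: one must verify that iterated application of Lemma~\ref{general PI} through the $d$ coordinates preserves the uniform lower bound on intersection probabilities, which requires that at each stage the set of admissible configurations of the remaining coordinates is still of the type covered by Lemma~\ref{positivity}, and that the Poincar\'e and capacity constants compound to a bound depending only on $r$, $d$, $m$, $\theta$, $\theta'$.
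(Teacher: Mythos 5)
Your strategy is essentially the paper's: dyadic splitting $w=\xi+\eta$, coordinate-wise convexity of the slices, iteration of Lemma~\ref{general PI} with Lemma~\ref{positivity} and Lemma~\ref{PI on convex domain} to get a Poincar\'e constant independent of $N$ and $\eta$, construction of primitives slice by slice, and a weak compactness argument. However, two steps as you describe them would fail.

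First, the construction of $g_N(\cdot,\eta)$. The slice is convex in each coordinate $\xi^k$ separately, but it is \emph{not} convex or star-shaped as a whole: the constraint $\|C(\xi^j+\eta^j,\xi^k+\eta^k)\|_{m,\theta}<r$ is quadratic in $\xi$, so the segment from a center $\xi_0$ to $\xi$ need not stay inside the set. Hence the single Koszul formula $\int_0^1\langle\beta(\xi_0+t(\xi-\xi_0),\eta),\xi-\xi_0\rangle\,dt$ does not produce a primitive on the whole slice. What is needed is the fibration identity $\alpha=\pi^{\ast}s^{\ast}\alpha+dK\alpha$ of Lemma~\ref{retraction formula}, applied inductively one coordinate at a time: at stage $k$ the fiber in $\xi^k$ is convex and contains $0$, the operator $K$ integrates out that variable, and closedness of $s^{\ast}\alpha$ on the base lets the induction descend to a convex set where the ordinary Poincar\'e lemma applies. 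Relatedly, the naive slice $\{\xi:\xi+\eta\in U_{r,\varphi}\}$ is not the right object; the paper's slices carry the additional constraints $\|C(\xi^i,\eta^j)\|_{m,\theta}<r/4$, $\|C(\eta^i,\xi^j)\|_{m,\theta}<r/4$ and $\eta$ is restricted to the set $R_N$ of small remainders, which is what guarantees $0$ lies in every fiber and that the approximating domains mesh with the cutoffs used later.

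Second, the limit. The Poincar\'e inequality gives only a uniform $L^2$ bound on the mean-zero $g_N$; it does not yield ``uniform ${\mathbb D}^{k,q}$ bounds on $g_N$ over $U_{r',\varphi}$'' --- there is no such local Sobolev norm available here (the paper's remark following the theorem points out that even the local space $W^1(U_{r,\varphi})$ is not understood), and one cannot extract a weak limit in ${\mathbb D}^{1,2}\cap{\mathbb D}^{k,q}$ from objects defined only on domains. The paper instead multiplies $\hat g_N=g_N 1_{U_{r,\varphi,N}}$ by globally defined smooth cutoffs $\psi(\hat\rho)\hat\chi_N$ built from the random variables $\|w^i\|_{m,\theta'/2}^m$, $\|C(w^i,w^j)\|_{m,\theta}^m$, etc., takes a weak $L^2$ limit of the product, identifies its weak derivative as $\beta\psi(\hat\rho)+\hat g_{\infty}d(\psi(\hat\rho))$ by testing against cylindrical forms with values in $P_{N_0}H^{\ast}$ and letting $N\to\infty$, and only then obtains membership in ${\mathbb D}^{1,2}$ and, by iterating the cutoff argument with nested functions $\psi_1,\psi_2,\ldots$, in ${\mathbb D}^{\infty,q}$. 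This cutoff-and-bootstrap mechanism is the missing ingredient that converts the local data into a genuine element of ${\mathbb D}^{1,2}(W^d)\cap{\mathbb D}^{\infty,q}(W^d)$ with $dg=\beta$ on $U_{r',\varphi}$.
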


\begin{thm}\label{a vanishing theorem}
Let $\beta\in {\mathbb D}^{\infty,q}(W^d,H^{\ast})\cap L^2(W^d,H^{\ast})$,
where $q>1$.
We assume that the first derivative of $\varphi$ is 
a bounded variation function.
Suppose that $d\beta=0$ on $U_{r}(\varphi)$.
Then for any $r'<r$, there exists $g\in {\mathbb D}^{\infty,q}(W^d,\RR)
\cap {\mathbb D}^{1,2}(W^d,\RR)$ such that
$dg=\beta$ on $U_{r'}(\varphi)$.
\end{thm}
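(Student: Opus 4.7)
The strategy is to deduce Theorem~\ref{a vanishing theorem} from Theorem~\ref{a vanishing theorem 0} via the Cameron--Martin translation $T_\varphi \colon w \mapsto w+\varphi$. Because $\varphi \in H \subset \Omega$, $T_\varphi$ is a measurable bijection of $\Omega$ and $(T_\varphi)_*\mu$ is absolutely continuous with respect to $\mu$ with density $R_\varphi(w)=\exp(\langle\varphi,w\rangle_H-\|\varphi\|_H^2/2)\in\bigcap_{p\ge 1}L^p(\mu)$. Since the differential of $T_\varphi$ acts as the identity on $H$, the natural pullback of the $H^{\ast}$-valued $1$-form $\beta$ is $\tilde\beta(w):=\beta(w+\varphi)$. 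First I would verify the set identity $U_r(\varphi)=T_\varphi(U_{r,\varphi})$: writing $z=w-\varphi$ and expanding the iterated integrals by bilinearity,
\[
C(w^j,w^k) = C(z^j,z^k)+C(z^j,\varphi^k)+C(\varphi^j,z^k)+C(\varphi^j,\varphi^k),
\]
the constraints defining $U_r(\varphi)$ in $w$ become exactly those defining $U_{r,\varphi}$ in $z$.

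Since translation commutes with the $H$-derivative, the chain rule yields $\nabla^k\tilde\beta(w)=(\nabla^k\beta)(w+\varphi)$, and in particular $d\tilde\beta(w)=(d\beta)(w+\varphi)$; hence $d\tilde\beta=0$ on $U_{r,\varphi}$. The Cameron--Martin formula combined with H\"older's inequality against $R_\varphi$ transfers the integrability: $\tilde\beta \in {\mathbb D}^{\infty,q'}(W^d,H^{\ast})$ for any $q'<q$, while the $L^2$-bound on $\tilde\beta$ (up to an arbitrarily small loss) follows by interpolating the $L^2$ and ${\mathbb D}^{\infty,q}$ hypotheses on $\beta$. Fixing $r'<r''<r$ and $q'\in(1,q)$ close to $q$, Theorem~\ref{a vanishing theorem 0} applied to $\tilde\beta$ on $U_{r'',\varphi}$ yields some $\tilde g \in {\mathbb D}^{\infty,q'}(W^d,\RR) \cap {\mathbb D}^{1,2}(W^d,\RR)$ with $d\tilde g=\tilde\beta$ on $U_{r'',\varphi}$. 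Setting $g(w):=\tilde g(w-\varphi)$, a second Cameron--Martin translation places $g$ in the desired Sobolev class, and the chain rule gives
\[
dg(w)=(d\tilde g)(w-\varphi)=\tilde\beta(w-\varphi)=\beta(w)
\qquad\text{whenever } w\in U_{r''}(\varphi)\supset U_{r'}(\varphi).
\]

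The main technical obstacle is the bookkeeping of the Sobolev exponent: each application of Cameron--Martin costs an arbitrarily small amount of integrability, since $R_{\pm\varphi}$ lies in every $L^p(\mu)$ but not in $L^\infty$. To recover ${\mathbb D}^{\infty,q}$ in the conclusion after this double loss, one must exploit the companion $L^2$ hypothesis to interpolate, or arrange that $q'$ be tuned arbitrarily close to $q$ so that the two losses combine into a single arbitrarily small loss that can be absorbed. The hypothesis that $\dot\varphi$ is of bounded variation provides the additional smoothness of $\varphi$ needed to control the mixed iterated integrals $C(\varphi,\varphi)$ appearing after the substitution in Step~1, and to ensure that the finite-dimensional Poincar\'e inequalities from the proof of Theorem~\ref{a vanishing theorem 0} remain uniformly applicable under the translation.
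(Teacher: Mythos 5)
Your overall strategy coincides with the paper's: reduce to Theorem~\ref{a vanishing theorem 0} by the translation $T_\varphi w=w+\varphi$, using the identity $U_r(\varphi)=T_\varphi(U_{r,\varphi})$ and the fact that translation commutes with the $H$-derivative. However, there is a genuine gap in your handling of integrability, and it is exactly the point the extra hypothesis on $\varphi$ is there to resolve. The Cameron--Martin density $R_{\pm\varphi}$ lies in every $L^p(\mu)$ but not in $L^\infty$, so H\"older gives only $T_\varphi^{\ast}\beta\in {\mathbb D}^{\infty,q'}$ for $q'<q$ and, more seriously, only $T_\varphi^{\ast}\beta\in L^{2-\ep}$ rather than $L^2$. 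Theorem~\ref{a vanishing theorem 0} requires membership in $L^2(W^d,H^{\ast})$ exactly, and the conclusion of Theorem~\ref{a vanishing theorem} requires ${\mathbb D}^{\infty,q}$ with the same $q$; neither can be recovered by ``tuning $q'$ arbitrarily close to $q$'' (an arbitrarily small loss is still a loss), and the interpolation you invoke runs the wrong way --- interpolating $L^2$ with $L^q$, $q<2$, cannot compensate for a density that is merely $p$-integrable. So as written, the hypotheses of Theorem~\ref{a vanishing theorem 0} are not verified for $\tilde\beta$, and the final $g$ does not land in the asserted space.

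The paper's repair is to multiply the translated form by a cutoff $\hat{\chi}_R(w)=\chi_R(\|w\|_{m,\theta'/2}^m)$ before invoking Theorem~\ref{a vanishing theorem 0}, and this is where the bounded-variation assumption on $\dot\varphi$ actually enters: on the set $\{\|w\|_{m,\theta/2}\le R+1\}$ the stochastic integral $\int_0^1(\varphi'(t),dw(t))$ in the Cameron--Martin exponent is a genuine Riemann--Stieltjes integral (integrate by parts against the BV function $\dot\varphi$) and hence is \emph{pointwise bounded} there. Consequently the density is bounded above and below on ${\rm supp}\,\hat{\chi}_R$, and one gets two-sided norm equivalences $C_1\|u\|_{k,q}\le\|(T_{\varphi}^{\ast}u)\hat{\chi}_R\|_{k,q}\le C_2\|u\|_{k,q}$ with \emph{no} change of exponent; the derivatives of $\hat{\chi}_R$ are bounded by Lemma~\ref{H-derivative}. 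One then applies Theorem~\ref{a vanishing theorem 0} to $\bar\beta=(T_\varphi^{\ast}\beta)\hat{\chi}_R$ (which equals $T_\varphi^{\ast}\beta$ on $U_{r,\varphi}$ for $R$ large, so closedness there is preserved) and translates back with a second cutoff $\hat{\chi}_{R'}$. Your attribution of the BV hypothesis to controlling $C(\varphi,\varphi)$ or the finite-dimensional Poincar\'e constants is not where it is needed; those points are handled inside Theorem~\ref{a vanishing theorem 0} itself.
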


First we prove Theorem~\ref{a vanishing theorem}
using Theorem~\ref{a vanishing theorem 0}.
After that, we will prove Theorem~\ref{a vanishing theorem 0}.

\begin{proof}
[Proof of Theorem~$\ref{a vanishing theorem}$]
Let $T_{\varphi}w=w+\varphi$.
Then $U_r(\varphi)=\{T_{\varphi}w~|~w\in U_{r,\varphi}\}$.
For a measurable function $u$ on $W^d$, define $T_{\varphi}^{\ast}u(w)
=u(w+\varphi)$.
Let $\chi_R$ be a smooth function on $\RR$ such that
$\chi_R(x)=1$ for $|x|\le R$ and $\chi_R(x)=0$ and
$|x|\ge R+1$.
Let $\hat{\chi}_R(w)=\chi(\|w\|_{m,\theta'/2}^m)$.
Note that $D^l\hat{\chi}_R(w)$ is a bounded function for all $l$.
This follows from Lemma~\ref{H-derivative}.
For any $q>1, k\in {\mathbb N}\cup \{0\}$,
there exist positive constants $C_1,C_2$~$(C_1<C_2)$ such that
for any
$u\in {\mathbb D}^{k,q}(W^d)$ 
$$
C_1\|u\|_{k,q}\le 
\|(T_{\varphi}^{\ast}u) \hat{\chi}_R\|_{k,q}
\le C_2\|u\|_{k,q}.
$$
This can be checked by using the Cameron-Martin formula and
the fact that the stochastic integral 
$\int_0^1(\varphi'(t),dw(t))$ is actually a Riemann-Stieltjes integral
and bounded on $\{w\in \Omega~|~\|w\|_{m,\theta/2}\le R+1\}$.
The same estimates hold for $1$-forms.
Let $\beta$ be the $1$-form which satisfies the assumptions of
the theorem.
Let $R$ be a sufficiently large number and set
$\bar{\beta}=(T_{\varphi}^{\ast}\beta) \hat{\chi}_R$.
Then $\bar{\beta}\in {\mathbb D}^{\infty,q}(W^d,H^{\ast})\cap
L^2(W^d,H^{\ast})$ and $d\bar{\beta}=0$ on $U_{r,\varphi}$.
Therefore by Theorem~\ref{a vanishing theorem 0}, there exists 
$\bar{g}\in {\mathbb D}^{\infty,q}(W^d,H^{\ast})\cap {\mathbb
D}^{1,2}(W^d,H^{\ast})$ such that
$d\bar{g}=\bar{\beta}$ on $U_{r',\varphi}$.
Define $g=\left(T_{-\varphi}^{\ast}\bar{g}\right)\hat{\chi}_{R'}$, where
$R'$ is also a sufficiently large positive number.
Then $g$ satisfies the desired properties.
\end{proof}

To prove Theorem~\ref{a vanishing theorem 0}, we need 
some homotopy arguments on finite dimensional space.
Let $U$ be a bounded open subset of $\RR^{n+m}$.
Let us write $z=(x,y)\in \RR^{n+m}$, where
$x\in \RR^n$ and $y\in \RR^m$.
Let $A$ be the image of the projection of
$U$ with respect to the first variable $x$.
Clearly, $A$ is also an open subset.
For $x\in A$, set $U_x=\{y\in \RR^m~|~(x,y)\in U\}$
which is also an open subset.
Using the notation above, we prepare the following.
The proof of this result is easy and we omit it.

\begin{lem}\label{retraction formula}
Suppose that $U_x$ is a convex set and contains $0$.
Let $\alpha$ be a $C^{\infty}$ $1$-form on $U$.
We write
\begin{equation}
\alpha(z)=\sum_{i=1}^n\beta_i(x,y)dx^i+\sum_{j=1}^m\gamma_j(x,y)dy^j.
\end{equation}
Let $\pi : U\to A$ be the projection and define
$s : A\to U$ by
$s(x)=(x,0)\in U$ for $x\in A$.
Let
\begin{equation}
\left(K\alpha\right)(z)=\int_0^1\sum_{j=1}^m\gamma_j(x,ty)y^jdt.
\end{equation}
If $d\alpha=0$ on $U$,
then it holds that $s^{\ast}\alpha$ is a closed form
on $A$ and
\begin{equation}
\alpha=\pi^{\ast}s^{\ast}\alpha+dK\alpha.
\end{equation}
\end{lem}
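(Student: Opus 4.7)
The plan is to prove this by direct computation using the standard Cartan homotopy with $H_t(x,y)=(x,ty)$ that contracts each slice $U_x$ to $(x,0)$, which is well-defined because each $U_x$ is convex and contains the origin. The operator $K$ is precisely the fiber integral of the pullback of the contraction of $\alpha$ along the vector field $\sum_j y^j \partial/\partial y^j$.

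First I would compute $s^*\alpha$ and $\pi^* s^* \alpha$ explicitly. Since $s(x)=(x,0)$ has differential $ds(x)=(\mathrm{id},0)$, the pullbacks $s^*dy^j$ vanish and $s^*\alpha=\sum_i \beta_i(x,0)\,dx^i$; hence $\pi^* s^*\alpha = \sum_i \beta_i(x,0)\,dx^i$ as a form on $U$. The first assertion that $s^*\alpha$ is closed on $A$ is immediate: pullback commutes with $d$, so $d(s^*\alpha)=s^*(d\alpha)=0$.

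Next I would differentiate $K\alpha$ under the integral sign. From $d\alpha=0$ the coefficient functions satisfy
\begin{equation*}
\frac{\partial \gamma_j}{\partial y^k}=\frac{\partial \gamma_k}{\partial y^j},\qquad
\frac{\partial \gamma_j}{\partial x^i}=\frac{\partial \beta_i}{\partial y^j},\qquad
\frac{\partial \beta_i}{\partial x^l}=\frac{\partial \beta_l}{\partial x^i}.
\end{equation*}
For the $dy^k$ component of $dK\alpha$, the integrand is $\gamma_k(x,ty)+\sum_j t\,\partial_{y^k}\gamma_j(x,ty)\,y^j$, and by the first identity above this equals $\frac{d}{dt}\bigl[t\gamma_k(x,ty)\bigr]$; integrating over $[0,1]$ produces $\gamma_k(x,y)$. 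For the $dx^i$ component, the integrand is $\sum_j \partial_{x^i}\gamma_j(x,ty)\,y^j$, and the second identity rewrites this as $\sum_j \partial_{y^j}\beta_i(x,ty)\,y^j=\frac{d}{dt}[\beta_i(x,ty)]$, whose integral is $\beta_i(x,y)-\beta_i(x,0)$. Combining,
\begin{equation*}
dK\alpha=\sum_i\bigl(\beta_i(x,y)-\beta_i(x,0)\bigr)dx^i+\sum_k \gamma_k(x,y)\,dy^k=\alpha-\pi^*s^*\alpha,
\end{equation*}
which is exactly the desired identity.

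There is no real obstacle here beyond careful bookkeeping: the argument reduces, slice by slice in $x$, to the classical proof of the Poincaré lemma on a star-shaped domain by integrating along radial lines, and the only analytic input is differentiation under the integral sign (justified by smoothness of $\alpha$ on the bounded open set $U$). The statement would fail without convexity of the slices $U_x$, which is where the assumption is used so that the homotopy $(t,x,y)\mapsto (x,ty)$ maps $[0,1]\times U$ into $U$.
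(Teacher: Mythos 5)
Your proof is correct: the identities extracted from $d\alpha=0$ are the right ones, the two $\frac{d}{dt}$ rewritings are valid, and the convexity of $U_x$ together with $0\in U_x$ is used exactly where needed (so that $(x,ty)\in U$ for $t\in[0,1]$). The paper omits this proof as ``easy,'' and your fiberwise Cartan-homotopy computation is precisely the standard argument it has in mind.
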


Needless to say, if $H^1(A,{\mathbb R})=0$, then
there exists a smooth function $g$ on $A$ such that
$dg=s^{\ast}\alpha$.
Therefore we have
$\alpha=d\left(\pi^{\ast}g+K\alpha\right)$.
We use this in the proof of Theorem~\ref{a vanishing theorem 0}.

\begin{proof}[Proof of Theorem~$\ref{a vanishing theorem 0}$]
Let $N\in {\mathbb N}$ and set
\begin{eqnarray}
R_N&=&
\Bigl\{\eta=(\eta^1,\ldots,
\eta^{d})~\in \Omega_N^{\perp}~ \Big |~
\max_{1\le i\le d}\|\eta^{i}\|_{m,\theta'/2}<r/4,\nonumber\\
& &\max_{1\le i<j\le d}\|C(\eta^{i},\eta^{j})\|_{m,\theta}
<r/4,
\max_{1\le i\le j\le d}\|C(\varphi^i,\eta^{j})
\|_{m,\theta}<r/4,\nonumber\\
& &\qquad\max_{1\le i\le j\le d}\|C(\eta^{i},
\varphi^j)\|_{m,\theta}<r/4
\Bigr\}.
\end{eqnarray}
For $\eta\in \Omega_N^{\perp}$, define
\begin{eqnarray}
U_{r,\varphi}(\eta)&=&
\Bigl\{
\xi=(\xi^1,\ldots,\xi^d)\in \Omega_N~\Big |~
\xi+\eta\in U_{r,\varphi},
\max_{1\le i<j\le d}\|C(\xi^i,\eta^{j})\|_{m,\theta}
<r/4,\nonumber\\
& &~
\max_{1\le i<j\le d}\|C(\eta^{i},\xi^j)\|_{m,\theta}
<r/4
\Bigr\}.
\end{eqnarray}
This set can be identified with
a bounded open subset of the Euclidean space
of dimension $2^N d$.
Using this, we define an approximate set of
$U_{r,\varphi}$ as follows.
\begin{eqnarray}
U_{r,\varphi,N}&=&
\left\{w\in \Omega~|~w(N)\in U_{r,\varphi}(w(N)^{\perp}),
w(N)^{\perp}\in R_N
\right\}.
\end{eqnarray}
Since $\Omega$ is isomorphic to the product space
$\Omega_N\times \Omega_N^{\perp}$,
$U_{r,\varphi,N}$ is thought as a subset of this product 
space.
Thus any function $g$ on $U_{r,\varphi,N}$ can be identified with
a function of $(\xi,\eta)$ where
$\xi\in U_{r,\varphi}(\eta), \eta\in R_N$.

Using Lemma~\ref{general PI} and Lemma~\ref{positivity} and an
induction, 
we prove the following Claims.

\medskip

\noindent
{\bf Claim 1}~Let $\eta\in R_N$.
Poincar\'e's inequality holds on
$U_{r,\varphi}(\eta)$ in the following form:
\begin{eqnarray}
\Var(g; U_{r,\varphi}(\eta))&\le&
C\int_{U_{r,\varphi}(\eta)}
|Dg(\xi)|_H^2d\mu_{N,U_{r,\varphi}(\eta)}(\xi),
\label{PI on DN}
\end{eqnarray}
where $C$ is a positive constant which depends only on
$r,d,\varphi,m,\theta,\theta'$ and
$\mu_{N,U_{r,\varphi}(\eta)}$
is a normalized probability measure on $U_{r,\varphi}(\eta)$.

\medskip

\noindent
{\bf Claim 2}~There exists a measurable function $g_N$
on $U_{r,\varphi,N}$ such that
for $\mu_N^{\perp}$-almost all  $\eta\in R_N$,
the function $\xi\in U_{r,\varphi}(\eta)\to
g_N(\xi,\eta)$
is a $C^{\infty}$ function with
\begin{eqnarray}
\sup_{\xi\in U_{r,\varphi}(\eta)}|g_N(\xi,\eta)|&<&\infty\\
\int_{U_{r,\varphi}(\eta)}g_N(\xi,\eta)d\mu_N(\xi)&=&0
\end{eqnarray}
and
$d_Ng_N=\beta_N$ holds on $U_{r,\varphi,N}$.
Here $d_Ng_N$ is the exterior differential of $g_N$ with respect to the
variable $\xi$ and $\beta_N=P_N\beta$ which is the projection
of $\beta$ onto $(\Omega_N\cap H)^{\ast}$.

\medskip

To prove these claims, we introduce the following sets.
First, we fix $\eta\in R_N$.
Let
\begin{eqnarray}
B_{d,N}(\eta)&=&
\Bigl\{\xi^d~|~
\|\xi^d+\eta^d\|_{m,\theta'/2}<r,
\max_{1\le i\le d}\|C(\varphi^i,\xi^d+\eta^d)\|_{m,\theta}<r,\nonumber\\
& &\quad \|C(\xi^d+\eta^d,\varphi^d)\|_{m,\theta}<r,
\max_{1\le l<d}\|C(\eta^{l},\xi^d)\|_{m,\theta}<r/4
\Bigr\}.
\end{eqnarray}
For $1\le k\le d-1$,
taking $\xi^i\in
B_{i,N}(\xi^{i+1},\ldots, \xi^d, \eta)$~$(k+1\le i\le d)$
inductively, we define
\begin{eqnarray}
\lefteqn{B_{k,N}(\xi^{k+1},\ldots,\xi^d,\eta)}\nonumber\\
& &=
\Bigl\{
\xi^k \Big |~
\|\xi^k+\eta^{k}\|_{m,\theta'/2}<r,~
\max_{l>k}\|C(\xi^k+\eta^k,\xi^l+\eta^l)\|_{m,\theta}<r,\nonumber\\
& &\quad
\max_{1\le i\le k}\|C(\varphi^i,\xi^k+\eta^k)\|_{m,\theta}<r,
\max_{l\ge k}\|C(\xi^k+\eta^k,\varphi^l)\|_{m,\theta}<r,\nonumber\\
& &\quad
\max_{l>k}\|C(\xi^k,\eta^{l})\|_{m,\theta}<r/4,
\max_{1\le j<k}\|C(\eta^{j},\xi^k)\|_{m,\theta}<r/4
\Bigr\}.
\end{eqnarray}
Note that $0\in B_{k,N}(\xi^{k+1},\ldots,\xi^d,\eta)$.
We denote all elements
$(\xi^{k+1},\ldots,\xi^d)$ which can be obtained in this way by
$S_{k+1,d}(\eta)$.

Now we define a sequence of subsets
inductively.
First set $U_{d}(\eta)=U_{r,\varphi}(\eta)$.
Inductively, for $1\le k\le d-1$ and $\left(\xi^{k+1},\ldots,\xi^d\right)
\in S_{k+1,d}(\eta)$
define
\begin{eqnarray}
\lefteqn{U_k(\xi^{k+1},\ldots,\xi^d, \eta)}\nonumber\\
& &=
\Biggl\{
(\xi^1,\ldots,\xi^k)~\Big |~
\max_{1\le i\le k}\|\xi^i+\eta^i\|_{m,\theta'/2}<r,\nonumber\\
& &\qquad \max_{1\le i<j\le k}
\|C(\xi^i+\eta^i,\xi^j+\eta^j)\|_{m,\theta}<r,
\max_{1\le i\le k<l\le d}
\|C(\xi^i+\eta^i,\xi^l+\eta^l)\|_{m,\theta}<r,\nonumber\\
& &\qquad
\max_{1\le i\le j\le k}\|C(\varphi^i,\xi^j+\eta^j)\|_{m,\theta}<r,
\max_{1\le i\le k, i\le j}\|C(\xi^i+\eta^i,
\varphi^j)\|_{m,\theta}<r,\nonumber\\
& &\qquad 
\max_{1\le i\le k, i<j\le d}\|C(\xi^i,\eta^{j})
\|_{m,\theta}<r/4,
\max_{1\le i<j, 1<j\le k}
\|C(\eta^{i},\xi^j)\|_{m,\theta}<r/4
\Biggr\}.\nonumber\\
& &
\end{eqnarray}
Then 
\begin{equation}
B_{k,N}(\xi^{k+1},\ldots,\xi^d, \eta)
=\left\{
\xi^k~\Big |~
U_k(\xi^{k+1},\ldots,\xi^d,\eta)^{\xi^k}\ne\emptyset\right\}
\end{equation}
and for $\xi^k\in B_{k,N}(\xi^{k+1},\ldots,\xi^d,\eta)$,
\begin{eqnarray}
U_k(\xi^{k+1},\ldots,\xi^d,\eta)^{\xi^k}
&=&
U_{k-1}(\xi^k,\ldots,\xi^d,\eta).\label{induction step}
\end{eqnarray}
In the above and below,  $U_k(\cdots)^{\xi^k}$,
$U_k(\cdots)_{(\xi^1,\ldots,\xi^{k-1})}$
denote the sections as in Lemma~\ref{general PI}.
Also 
\begin{eqnarray}
\lefteqn{U_{k-1}(0,\xi^{k+1},\ldots,\xi^d, \eta)}\nonumber\\
& &=
\Bigl\{(\xi^1,\ldots,\xi^{k-1})~\Big|~
U_k(\xi^{k+1},\ldots,\xi^{d},\eta)_{
(\xi^1,\ldots,\xi^{k-1})}\ne \emptyset\Bigr\}\nonumber
\end{eqnarray}
and for $(\xi^{1},\ldots,\xi^{k-1})\in U_{k-1}
(0,\xi^{k+1},\ldots,\xi^{d},\eta)$,
\begin{eqnarray}
\lefteqn{U_k(\xi^{k+1},\ldots,\xi^d, \eta)
_{(\xi^1,\ldots,\xi^{k-1})}}\nonumber\\
& &=
\Bigl\{
\xi^k~\Big|~
\|\xi^{k}+\eta^{k}\|_{m,\theta'/2}<r,
\max_{1\le i<k}\|C(\xi^i+\eta^i,\xi^k+\eta^k)\|_{m,\theta}<r,\nonumber\\
& &\qquad \max_{l>k}\|C(\xi^k+\eta^k,\xi^l+\eta^l)\|_{m,\theta}<r\nonumber\\
& &\qquad \max_{1\le i\le k}\|C(\varphi^i,\xi^k+\eta^k)\|_{m,\theta}<r,
~\max_{l\ge k}\|C(\xi^k+\eta^k,\varphi^l)\|_{m,\theta}<r,\nonumber\\
& &\qquad
\max_{l>k}\|C(\xi^k,\eta^{l})\|_{m,\theta}<r/4,
\max_{1\le i<k}\|C(\eta^{i},\xi^k)\|_{m,\theta}<r/4
\Bigr\}.\label{convex-section}
\end{eqnarray}
Note that $U_k(\xi^{k+1},\ldots,\xi^d,\eta)
_{(\xi^1,\ldots,\xi^{k-1})}$ is a
convex set of $\RR^{2^N}$ and contains $0$.
Further, by Lemma~\ref{positivity}, we have
for all $1\le k\le d-1$,
\begin{multline}
\inf\Bigl\{\mu\Bigl(U_k(\xi^{k+1},\ldots,\xi^d,\eta)
_{x}
\cap
U_k(\xi^{k+1},\ldots,\xi^d,\eta)
_{y}\Bigr)~\Big |~
x,y\in U_{k-1}\left(0,\xi^{k+1},\ldots,\xi^d,\eta\right),\\
(\xi^{k+1},\ldots,\xi^d)\in S_{k+1,d}(\eta),~
\eta\in R_N
\Bigr\}>0
\end{multline}
and the lower bound is given by the inverse of products of 
$C(l,r/4,r,m,\theta,\theta')$.
Hence in order to check Claim 1,
by (\ref{induction step}) and Lemma~\ref{general PI}, we need to prove
Poincar\'e's inequality with a Poincar\'e constant
which is independent of $\xi^k,\ldots,\xi^d,\eta$
on $U_{k-1}(\xi^k,\ldots,\xi^d,\eta)$.
This is checked by using Lemma~\ref{PI on convex domain}.
Thus 
we see that 
Claim 1
holds with the constant
$C$ which depends only on the inverse of products of 
$C(l,r/4,r,m,\theta,\theta')$.

We prove Claim 2.
Let $\eta\in R_N$.
Then $\beta_N(\cdot,\eta)\in 
\wedge^1T^{\ast}U_{r,\varphi}(\eta)$
is also a closed $C^{\infty}$-differential form
and the supremum norm of all derivatives are finite 
for almost all $\eta$ by the Sobolev embedding theorem.
By Lemma~\ref{retraction formula} and using inductive argument,
we can construct a bounded function
$u_N(\cdot,\eta)\in 
C^{\infty}(U_{r,\varphi}(\eta))$ explicitly such that
$d_Nu_N=\beta_N$ and
$u_N(\xi,\eta)$ is a measurable function on $U_{r,\varphi,N}$.
Using $u_N$, we see that
$$
g_N=u_N-
\frac{1}{\mu_N\left(U_{r,\varphi}(\eta)\right)}
\int_{U_{r,\varphi}(\eta)}u_N(\xi,\eta)d\mu_N(\xi)
$$
is the desired function.

Now, we prove the existence of $g$ which satisfies the
desired property in the Theorem.
Let $g_N$ be the function in the Claim 2.
Then by the Poincar\'e inequality established in the Claim 1,
it holds that
\begin{equation}
\|g_N\|_{L^2(U_{r,\varphi,N})}^2\le
C\|\beta_N\|_{L^2(U_{r,\varphi,N})}^2
\le C\|\beta\|_{L^2(U_{r,\varphi})}^2.
\end{equation}
Let $\hat{g}_N(w)=g_N(w)1_{U_{r,\varphi,N}}(w)$.
Let us choose a positive numbers $r_1,r_2$
such that $0<r'<r_1<r_2<r$.
Let $\rho$ be a smooth function on $\RR^{3d(d+1)/2}$ such that
$
\max_y|\rho(y)-\max_i|y^i||
$
is sufficiently small.
It is easy to see the existence of such a function using
a mollifier.
Then there exists a small positive number
$\ep$ such that for any $r_1\le s\le r_2$,
\begin{multline}
\left\{x=(x^i)\in \RR^{3d(d+1)/2}~|~
\max_i|x^i|<r'+\ep\right\}
\subset
\left\{x=(x^i)\in \RR^{3d(d+1)/2}~|~\rho(x^{(m)})<s^m\right\}\\
\subset \left\{x=(x^i)\in \RR^{3d(d+1)/2}~|~
\max_i|x^i|<r
\right\},
\end{multline}
where $x^{(m)}=((x^1)^m,\ldots,(x^{3d(d+1)/2})^m)$.
Note that the index $j$ of $(x^i)^j$ 
is the power and $i$ stands for the $i$-th element.
Let $\hat{\rho}(w)$ be the composition of
$\rho$ and the $3d(d+1)/2$ random variables 
\begin{multline}
\|w^i\|_{m,\theta'/2}^m ~(1\le i\le d),
\|C(w^j,w^k)\|_{m,\theta}^m ~(1\le j<k\le d)\\
\|C(\varphi^i,w^j)\|_{m,\theta}^m ~(1\le i\le j\le d),
\|C(w^i,\varphi^j)\|_{m,\theta}^m ~(1\le i\le j\le d).
\end{multline}
Let $\chi$ be the smooth decreasing function such that
$\chi(u)=1$ for $u\le (r/6)^m$
$\chi(u)=0$ for $u\ge (r/5)^m$ and set
\begin{eqnarray*}
\hat{\chi}_N(w)&=&
\chi\Bigl(
\sum_{i=1}^d\|w(N)^{\perp,i}\|_{m,\theta'/2}^m+
\sum_{1\le j<k\le d}
\|C(w(N)^{\perp,j},w(N)^{\perp,k})\|_{m,\theta}^m\nonumber\\
& &
+\sum_{1\le i\le j\le d}\|C(\varphi^i,w(N)^{\perp,j})\|_{m,\theta}^m
+\sum_{1\le i\le j\le d}\|C(w(N)^{\perp,i},\varphi^j)\|_{m,\theta}^m
\nonumber\\
& &
+\sum_{1\le i<j\le d}\|C(w(N)^i,w(N)^{\perp,j})\|_{m,\theta}^m
+\sum_{1\le i<j\le d}\|C(w(N)^{\perp,i},w(N)^j)\|_{m,\theta}^m
\Bigr).
\end{eqnarray*}
Let $\psi$ be the smooth decreasing function such that
$\psi(u)=1$ for $u\le \frac{r_1^m+r_2^m}{2}$ 
and $\psi(u)=0$ for $u\ge \frac{r_1^m+2r_2^m}{3}$.
Let $h_N(w)=\hat{g}_N(w)\psi\left(\hat{\rho}(w)\right)
\hat{\chi}_N(w)$.
Since $\sup_N\|\hat{g}_N\|_{L^2(W^d,\mu)}<\infty$,
there exists
a subsequence $\hat{g}_{N(k)}$~$(N(1)<N(2)<\cdots)$
such that $\hat{g}_{N(k)}$ converges weakly to some 
$\hat{g}_{\infty}\in L^2(W^d,\mu)$.
Noting that $\|\hat{\chi}_N\|_{\infty}\le 1$ and
$\lim_{N\to\infty}\hat{\chi}_N(w)=1$ for all $w\in \Omega$,
we see that $\hat{g}_{N(k)}(w)\psi(\hat{\rho}(w))\hat{\chi}_{N(k)}(w)$
also converges weakly to
$\hat{g}_{\infty}(w)\psi(\hat{\rho}(w))$
which we denote by $h_{\infty}(w)$.
We calculate the weak derivative of $h_{\infty}$.
Fix a natural number $N_0$ and
let $\theta\in {\mathbb D}^{\infty}(W^d\to P_{N_0}H^{\ast})$.
Then
\begin{eqnarray}
\int_{W^d}h_{\infty}(w)D^{\ast}\theta(w)d\mu(w)
&=&\lim_{k\to\infty}\int_{W^d}
h_{N(k)}(w)D^{\ast}\theta(w)d\mu(w)\nonumber\\
&=&\lim_{k\to\infty}\int_{W^d}\left(d_{N(k)}h_{N(k)}(w),\theta(w)\right)
d\mu(w).
\end{eqnarray}
Here
\begin{eqnarray}
d_{N(k)}\left(\hat{g}_{N(k)}\psi(\hat{\rho})\hat{\chi}_{N(k)}\right)
&=&\beta_{N(k)}\psi(\hat{\rho})\hat{\chi}_{N(k)}
+\hat{g}_{N(k)}d_{N(k)}\left(\psi(\hat{\rho}(w))\right)
\hat{\chi}_{N(k)}(w)\nonumber\\
& &+
\hat{g}_{N(k)}\psi(\hat{\rho}(w))
d_{N(k)}\hat{\chi}_{N(k)}(w).
\end{eqnarray}
Noting that
\begin{eqnarray}
\lim_{k\to\infty}\|d_{N(k)}\left(\psi(\hat{\rho})\right)-
d\left(\psi(\hat{\rho})\right)\|_{L^4(\mu)}
&=&0,\\
\lim_{k\to\infty}
\|d_{N(k)}\hat{\chi}_{N(k)}\|_{L^4(\mu)}
&=&0,
\end{eqnarray}
we get
\begin{eqnarray}
\lefteqn{\int_{W^d}h_{\infty}(w)D^{\ast}\theta(w)d\mu(w)}\nonumber\\
& &=
\int_{W^d}
\Bigl(\beta(w)\psi(\hat{\rho}(w))
+\hat{g}_{\infty}(w)d\left(\psi(\hat{\rho}(w))\right),
\theta(w)
\Bigr)d\mu(w).
\end{eqnarray}
This implies $dh_{\infty}=\beta\psi(\hat{\rho})
+\hat{g}_{\infty}d\left(\psi(\hat{\rho})\right)$ in weak sense.
By Lemma~\ref{pointwise estimate} and Lemma~\ref{H-derivative},
$d\left(\psi(\hat{\rho})\right)$
is a bounded function.
Hence $dh_{\infty}\in L^2(W^d,\mu)$ which implies 
$h_{\infty}\in {\mathbb D}^{1,2}(W^d,\RR)$.
Also $h_{\infty}$ satisfies that 
$dh_{\infty}=\beta$ on $U_{r',\varphi}$.
Finally we need to show the regularity of the higher order
derivatives of $h_{\infty}$.
Choosing a smooth function $\psi_1$ on $\RR$ such that
$\psi_1(u)=1$ for $u\le \frac{r_1^m+3r_2^m}{4}$
and $\psi_1(u)=0$ for $u\ge \frac{r_1^m+4r_2^m}{5}$,
we have 
$$
\hat{g}_{\infty}\psi_1(\hat{\rho})d\left(\psi(\hat{\rho})\right)
=\hat{g}_{\infty}d\left(\psi(\hat{\rho})\right).
$$
We see that $\hat{g}_{\infty}\psi_1(\hat{\rho})
\in {\mathbb D}^{1,2}(W^d,\RR)$
by the same argument as the above.
Hence $h_{\infty}\in {\mathbb D}^{2,q}(W^d,{\mathbb R})$.
Iterating this procedure, we get 
$h_{\infty}\in {\mathbb D}^{\infty,q}(W^d,\RR)$.
\end{proof}

\begin{rem}
In the same way as the proof of Claim 1,
we can prove that for any $g\in {\mathbb D}^{1,2}(W^d)$,
\begin{eqnarray}
\Var(g; U_{r,\varphi})&\le&
C\int_{U_{r,\varphi}}
|Dg(w)|_H^2d\mu_{U_{r,\varphi}}(w),
\end{eqnarray}
where $\mu_{U_{r,\varphi}}$ denotes the normalized probability measure
on $U_{r,\varphi}$ and $\Var$ denotes the variance with respect to the measure.
We may define a local Sobolev space $W^1(U_{r,\varphi})$.
It is not clear that $W^1(U_{r,\varphi})$ coincides with the restriction
of ${\mathbb D}^{1,2}(W^d)$ to $U_{r,\varphi}$ at the moment.
Note that the extension property of functions on convex sets were studied in
{\rm \cite{hino}}.
See {\rm \cite{hino2}} for more recent results.
\end{rem}

Let $B_{\ep}(e)=\{a\in G~|~d(a,e)<\ep\}$.
We assume that $\ep$ is sufficiently small
and $B_{\ep}(e)$ is diffeomorphic to a standard ball
in a Euclidean space.
Let
$$
{\cal D}_{\ep}=\left\{w\in\Omega~|~X(1,e,w)\in B_{\ep}(e)\right\}.
$$
This set is formally homotopy equivalent to 
$S=\{w\in \Omega~|~X(1,e,w)=e\}$ and $L_e(G)$.
We construct a covering of ${\cal D}_{\ep}$ by
a countable family of $U_{r}(\varphi)$ in the next section.
This covering is vital for the proof of
the existence of $f$ satisfying 
$df=\alpha$.

\section{A covering lemma for ${\mathcal D}_{\ep}$}

For $K\in {\mathbb N}$ and $0<\kappa<1$, let
\begin{eqnarray}
A_{K}&=&\left\{w\in \Omega~|~d_{\Omega}(0,w)<K \right\}\\
B_{N,\kappa}&=&
\Bigl\{
w\in \Omega~\Big|~
\max_{i}\|w(N)^{\perp,i}\|_{m,\theta'/2}<\kappa,
\max_{1\le i<j\le
d}\|C(w(N)^{\perp,i},w(N)^{\perp,j})\|_{m,\theta}<\kappa,
\nonumber\\
& &\quad \max_{1\le i\le j\le d}\|C(w(N)^{i},w(N)^{\perp,j})\|_{m,\theta}
<\kappa,
\max_{1\le i\le j\le d}
\|C(w(N)^{\perp,i},w(N)^{j})\|_{m,\theta}<\kappa
\Bigr\}.\nonumber\\
& &
\end{eqnarray}
Note that $A_K=U_K(0)$, $B_{N,\kappa}=\{w\in \Omega~|~w\in 
U_{\kappa}(w(N))\}$.
For $w\in A_K\cap B_{N,\kappa}$, $\max_i\|w(N)^i\|_{m,\theta'/2}<K+1$.
Let $\ep_n=\ep(1-\frac{1}{n})$~$(n=1,2,\ldots)$ and
\begin{eqnarray}
{\cal D}_{\ep_n,K,N,\kappa}&=&
{\cal D}_{\ep_n}\cap A_K\cap B_{N,\kappa}\\
\end{eqnarray}
For any $\kappa>0, n,K$, we have
\begin{equation}
\liminf_{N\to\infty}{\cal D}_{\ep_n,K,N,\kappa}
={\cal D}_{\ep_n}\cap A_K.\label{key-covering 1}
\end{equation}

For fixed $n$ and $K$,  we can find a positive number
$\kappa(n,K)$ such that there exists a
finite cover of ${\cal D}_{\ep_n,K,N,\kappa(n,K)}$
by $U_{r}(\varphi)$ which satisfies
$U_{r}(\varphi)\subset {\cal D}_{\ep_{2n}}$.
Since (\ref{key-covering 1}) holds,
this implies that there exists a countable cover
of ${\cal D}_{\ep_n}\cap A_K$ by $U_r(\varphi)$
which are included in ${\cal D}_{\ep_{2n}}$ and
so does for ${\cal D}_{\ep}$ too.
More precisely we prove the following.

\begin{lem}\label{covering}
$(1)$~Let $R_{m,\theta}=\max(M_{m,\theta}^2, N_{m,\theta})$.
See Lemma~$\ref{pointwise estimate}$\,$(1)$ and Lemma~$\ref{Hoelder and Besov}$
for the constants $M_{m,\theta}, N_{m,\theta}$.
Let 
\begin{equation}
\kappa<
\min\left(
\frac{\ep}{48nR_{m,\theta}(K+1)
F\left(K+18R_{m,\theta}\left(
K+1)\right)\right)}, \frac{1}{2}
\right),
\label{kappa}
\end{equation}
where $F$ is a function which appeared in Proposition~$\ref{solution of sde}$.
Let $w\in {\cal D}_{\ep_n,K,N,\kappa}$.
We take $\varphi\in H$ such that
\begin{equation}
\|\varphi-w(N)\|_H\le
\frac{\kappa}{
3\left(6\kappa+2K+5\right)
}.
\end{equation}
Then 
\begin{equation}
w\in U_{4\kappa/3}(\varphi)
\subset U_{\sqrt{2}\kappa}(\varphi)
\subset {\cal D}_{\ep_{2n}}.
\end{equation}

\noindent
$(2)$~Let $\kappa$ be a positive number satisfying 
$(\ref{kappa})$.
Then for any $N\in {\mathbb N}$,
there exists $L=L(n,K,N,\kappa)$ and a finite number of
piecewise linear paths
$\{\varphi_i\}_{i=1}^L\subset \Omega_N$
such that 
\begin{equation}
{\cal D}_{\ep_n,K,N,\kappa}\subset
\cup_{i=1}^LU_{4\kappa/3}(\varphi_i)\subset 
\cup_{i=1}^LU_{\sqrt{2}\kappa}(\varphi_i)\subset 
{\cal D}_{\ep_{2n}}.
\end{equation}

\noindent
$(3)$~Let
$\{\kappa_i,\varphi_i\}_{i=1}^{\infty}$
be countable positive numbers and
piecewise linear paths 
which are obtained in $(2)$ when $N, K, n$ take all values of natural
numbers.
Then it holds that
\begin{equation}
{\cal D}_{\ep}=\cup_{i=1}^{\infty}
U_{4\kappa_i/3}(\varphi_i)=
\cup_{i=1}^{\infty}
U_{\sqrt{2}\kappa_i}(\varphi_i).\label{covering of Dep}
\end{equation}
\end{lem}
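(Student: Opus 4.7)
The plan is to prove the three parts in order, with (1) carrying the main computational content and (2)--(3) following by a compactness argument and a diagonal extraction over $(n,K,N)$. For (1), decompose $w^i - \varphi^i = w(N)^{\perp,i} + (w(N)^i - \varphi^i)$ and estimate each defining seminorm of $U_{4\kappa/3}(\varphi)$ via the triangle inequality together with the bilinearity of $C(\cdot,\cdot)$ and the pointwise bounds of Lemma~\ref{pointwise estimate}. The path norm $\|w^i - \varphi^i\|_{m,\theta'/2}$ splits into a piece $<\kappa$ (from $w \in B_{N,\kappa}$) and a piece bounded by $\|w(N) - \varphi\|_H < \kappa/15$ via (\ref{varphi}), giving a total $<4\kappa/3$. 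Iterated integral norms such as $\|C(w^j - \varphi^j, w^k - \varphi^k)\|_{m,\theta}$ are expanded bilinearly into four pieces: the pure $w(N)^\perp$ term is controlled by the definition of $B_{N,\kappa}$, while the three mixed terms are handled by (\ref{Cwvarphi})--(\ref{Cvarphiw}) using $\|w(N) - \varphi\|_H \leq \kappa/(3(6\kappa+2K+5))$ together with the norm monotonicity $\|\cdot\|_{m,\theta/2} \leq \|\cdot\|_{m,\theta'/2}$ and the bound $\|w(N)^i\|_{m,\theta'/2} \leq K + \kappa$. The specific radius $\kappa/(3(6\kappa+2K+5))$ is calibrated precisely so that the combined cross-term contribution stays under $\kappa/3$.

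For the inclusion $U_{\sqrt{2}\kappa}(\varphi) \subset \mathcal{D}_{\ep_{2n}}$, take $z \in U_{\sqrt{2}\kappa}(\varphi)$ (which also contains $w$) and apply Proposition~\ref{solution of sde}(2) with the reference point $w$. To estimate $d_{\Omega}(w,z)$, expand $C(w^i,w^j) - C(z^i,z^j)$ by writing both $w$ and $z$ as $\varphi + (\text{small perturbation})$, use bilinearity, and convert the resulting $\|\cdot\|_{m,\theta}$ bounds to the H\"older norm $\|\cdot\|_{H,\theta}$ appearing in $d_{\Omega}$ via Lemma~\ref{Hoelder and Besov}; this yields $d_{\Omega}(w,z) \leq c\,R_{m,\theta}(K+1)\kappa$ for an absolute constant $c$. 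The triangle inequality on $(\Omega, d_{\Omega})$ together with $d_{\Omega}(0,w) < K$ and $\kappa < 1/2$ gives $d_{\Omega}(0,z) \leq K + 18R_{m,\theta}(K+1)$. Proposition~\ref{solution of sde}(2) then bounds $d(X(1,e,w), X(1,e,z))$ by $F(K + 18R_{m,\theta}(K+1)) \cdot c\,R_{m,\theta}(K+1)\kappa$, and the threshold (\ref{kappa}) is chosen precisely so that this is strictly less than $\ep_{2n} - \ep_n = \ep/(2n)$; combined with $X(1,e,w) \in B_{\ep_n}(e)$, this gives $X(1,e,z) \in B_{\ep_{2n}}(e)$.

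For (2), the projections $\{w(N) : w \in \mathcal{D}_{\ep_n, K, N, \kappa}\}$ live in the finite-dimensional space $\Omega_N \cong \mathbb{R}^{2^N d}$ and are bounded in $\|\cdot\|_{m,\theta'/2}$ by $K + \kappa$; since all norms on $\Omega_N$ are equivalent, this set is precompact in $H$ and can be covered by finitely many open $H$-balls of radius $\kappa/(3(6\kappa + 2K + 5))$ centered at piecewise linear paths $\varphi_1, \ldots, \varphi_L \in \Omega_N$, to each of which we apply (1). For (3), given $w \in \mathcal{D}_\ep$, pick $n$ with $X(1,e,w) \in B_{\ep_n}(e)$ and $K$ with $w \in A_K$, and set $\kappa = \kappa(n,K)$ as in (\ref{kappa}); the defining convergence properties (i)--(iii) of $\Omega$ force $w \in B_{N,\kappa}$ for all sufficiently large $N$, placing $w$ in some $\mathcal{D}_{\ep_n, K, N, \kappa}$ and hence in one of the $U_{4\kappa/3}(\varphi_i)$ produced by (2). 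Running this over all triples $(n,K,N)$ produces the countable cover and yields the forward inclusion in (\ref{covering of Dep}); the reverse inclusion $U_{\sqrt{2}\kappa_i}(\varphi_i) \subset \mathcal{D}_{\ep_{2n_i}} \subset \mathcal{D}_\ep$ is delivered by (1). The main obstacle is the bookkeeping in part (1): every bilinear expansion produces several cross-terms, and one must track carefully which factor is a small $w(N)^\perp$, which is a controlled $\varphi$ or $w(N)$, and whether $\|\cdot\|_{m,\theta/2}$ or $\|\cdot\|_{m,\theta'/2}$ applies, so that all constants work out to the precise thresholds $4\kappa/3$ and $\sqrt{2}\kappa$.
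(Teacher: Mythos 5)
Your overall strategy matches the paper's: reduce $w\in{\cal D}_{\ep_n,K,N,\kappa}$ to membership in a ball $U_{4\kappa/3}(\varphi)$, show the slightly larger ball $U_{\sqrt{2}\kappa}(\varphi)$ stays inside ${\cal D}_{\ep_{2n}}$ via the rough-path continuity of $w\mapsto X(1,e,w)$ (Proposition~\ref{solution of sde}\,(2)), then cover by compactness in $\Omega_N$ and exhaust over $(n,K,N)$. Parts (2) and (3) of your argument are exactly what the paper does.

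Where you diverge from the paper is in the intermediate bookkeeping of part (1), and here the paper has a cleaner device that you should notice. The paper first observes that $w\in B_{N,\kappa}$ is literally the statement $w\in U_\kappa(w(N))$, and then invokes Lemma~\ref{U and V}\,(1) with $\varphi_1=w(N)$, $\varphi_2=\varphi$, $r=\kappa$, taking $\delta=1/3$ and $\delta=\sqrt{2}-1$, to get the full sandwich
$$
U_\kappa(w(N))\subset U_{4\kappa/3}(\varphi)\subset U_{\sqrt{2}\kappa}(\varphi)\subset U_{2\kappa}(w(N)).
$$
This both explains the otherwise mysterious radii — $4/3=1+\delta$ for $\delta=1/3$, and $\sqrt{2}$ is chosen precisely because $(1+(\sqrt{2}-1))^2=2$, so the large ball nests back inside $U_{2\kappa}(w(N))$ — and, crucially, lets the paper verify $U_{2\kappa}(w(N))\subset{\cal D}_{\ep_{2n}}$ by two applications of Proposition~\ref{solution of sde}\,(2) routed through the well-controlled reference point $w(N)$: first $d(X(1,e,w(N)),e)<\ep(1-\tfrac{3}{4n})$, then $d(X(1,e,z),e)<\ep(1-\tfrac{1}{2n})$ for $z\in U_{2\kappa}(w(N))$. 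The budget split $\ep/(4n)+\ep/(4n)$ (with $p=q=4$) is exactly what produces the factor $48n$ and the argument $K+18R_{m,\theta}(K+1)$ in~(\ref{kappa}).

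Your proposal instead re-derives the inclusion $w\in U_{4\kappa/3}(\varphi)$ by hand (which amounts to reproving a special case of Lemma~\ref{U and V}\,(1) — fine, just redundant), omits the containment $U_{\sqrt{2}\kappa}(\varphi)\subset U_{2\kappa}(w(N))$ entirely, and instead estimates $d_\Omega(w,z)$ directly for $w,z$ both near $\varphi$ and applies Proposition~\ref{solution of sde}\,(2) once with reference point $w$. This can be made to work with the stated threshold — you have a single budget $\ep/(2n)=\ep_{2n}-\ep_n$, which asks for $d_\Omega(w,z)\le 24R_{m,\theta}(K+1)\kappa$, and a careful bilinear expansion of $C(w^i,w^j)-C(z^i,z^j)$ with both points written as $\varphi+(\hbox{perturbation of size}\le\sqrt{2}\kappa)$ does land under that bound once one uses $\|\varphi\|_{m,\theta/2}\lesssim K+1$. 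But this is exactly the constant-level bookkeeping you flag at the end of your proposal as ``the main obstacle,'' and your write-up asserts ``$d_\Omega(w,z)\le c\,R_{m,\theta}(K+1)\kappa$ for an absolute constant $c$'' without computing $c$ or checking it fits $48n$. The threshold~(\ref{kappa}) is reverse-engineered from the paper's two-step route through $w(N)$, so if you bypass that route you must actually hit the target $c\le 24$ rather than wave at it. Noticing that $\sqrt{2}$ is chosen so that $U_{\sqrt{2}\kappa}(\varphi)$ fits back inside $U_{2\kappa}(w(N))$, and then routing everything through $w(N)$, is the structural insight that lets the paper verify the constants painlessly; I'd recommend adopting that device rather than re-deriving the estimates from scratch.
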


We need a lemma to prove
the above.

For $z\in \Omega$, let us define
\begin{eqnarray}
V_r(z)&=&\left\{w\in \Omega~|~d_{\Omega}(w,z)<r\right\}.\label{ball}
\end{eqnarray}

\begin{lem}\label{U and V}
Let $r>0$.

\noindent
$(1)$~Let $\varphi_1=(\varphi_1^1,\ldots,\varphi_1^d), 
\varphi_2=(\varphi_2^1,\ldots,\varphi_2^d)\in H$.
Let $0<\delta<1$.
If 
\begin{eqnarray}
\max_i\|\varphi_1^i-\varphi_2^i\|_H
&\le&
\frac{\delta r}{
1+3r+2\max_i\|\varphi_1^i\|_{m,\theta/2}}
\label{inclusion1}
\end{eqnarray}
then 
$U_{r}(\varphi_1)\subset U_{(1+\delta)r}(\varphi_2)$.

If the stronger assumption
\begin{eqnarray}
\max_i\|\varphi_1^i-\varphi_2^i\|_H
&\le&
\frac{\delta r}{1+6r+2\max
_{i}\left(\|\varphi_1^i\|_{m,\theta/2}, \|\varphi_2^i\|_{m,\theta/2}
\right)}
\label{inclusion2}
\end{eqnarray}
holds,
then we have
$$
U_r(\varphi_1)\subset U_{(1+\delta)r}(\varphi_2)
\subset U_{(1+\delta)^2r}(\varphi_1).
$$

\noindent
$(2)$~
Let $0<r<1$ and $\varphi\in H$.
Then
\begin{equation}
U_r(\varphi)\subset
V_{R_{m,\theta}(5+6\|\varphi\|_{m,\theta/2})r}(\varphi).\label{inclusion3}
\end{equation}
\end{lem}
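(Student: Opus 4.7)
The plan is to prove both parts by bilinear expansion of the iterated integrals $C(\cdot,\cdot)$ combined with termwise application of Lemma~\ref{pointwise estimate} and Lemma~\ref{Hoelder and Besov}, using the integration-by-parts identity \eqref{ibp for rough path} to swap the two arguments of $C$ whenever the index orderings in the definition of $U_r(\varphi)$ do not directly match those required by the target estimate.

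For Part (1), let $w\in U_r(\varphi_1)$ and set $\ep:=\max_i\|\varphi_1^i-\varphi_2^i\|_H$. Writing $w^i-\varphi_2^i=(w^i-\varphi_1^i)+(\varphi_1^i-\varphi_2^i)$, the single-index bound $\|w^i-\varphi_2^i\|_{m,\theta'/2}\le r+\ep$ is immediate from the triangle inequality and \eqref{varphi}. Each of the three quadratic clusters defining $U_{(1+\delta)r}(\varphi_2)$ expands by bilinearity into four subterms: the pure $\varphi_1$-term is controlled by $r$ via the hypothesis, and each of the remaining three terms has at least one Cameron--Martin argument (one of $\varphi_1^i-\varphi_2^i$, $\varphi_1^i$, $\varphi_2^i$), so is estimated by \eqref{Cwvarphi}--\eqref{Cvarphiw} after using $\|\cdot\|_{m,\theta/2}\le\|\cdot\|_{m,\theta'/2}$ on the rough factor. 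Collecting, each of the three clusters is bounded in the worst case by
\[
r+\ep\bigl(3r+2\max_i\|\varphi_1^i\|_{m,\theta/2}+2\ep\bigr).
\]
The hypothesis \eqref{inclusion1} forces $\ep\le\delta r/(1+3r)<1/3$, so $2\ep<1$ and the bracketed factor is $\le 1+3r+2\max_i\|\varphi_1^i\|_{m,\theta/2}$; hence the whole expression is $\le(1+\delta)r$. The symmetric refinement under \eqref{inclusion2} follows by applying the inclusion just proved once in each direction (with $\varphi_1,\varphi_2$ swapped and $r$ replaced by $(1+\delta)r$ in the second application), noting $3(1+\delta)\le 6$ accounts for the change of constant, and that \eqref{inclusion2} is symmetric in the two maxima.

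For Part (2), the $\max_i\|w^i-\varphi^i\|_{m,\theta'/2}<r$ component of $d_\Omega$ is absorbed trivially. For the iterated-integral component, expand
\[
C(w^i,w^j)-C(\varphi^i,\varphi^j)=C(w^i-\varphi^i,w^j-\varphi^j)+C(w^i-\varphi^i,\varphi^j)+C(\varphi^i,w^j-\varphi^j),
\]
and apply Lemma~\ref{Hoelder and Besov} to each summand, converting $\|\cdot\|_{H,\theta}$ into an $\|\cdot\|_{m,\theta}$ piece plus a product of $\|\cdot\|_{m,\theta/2}$ norms. When the index pair lies in $i\le j$, the $\|\cdot\|_{m,\theta}$ contributions are $<r$ directly from the definition of $U_r(\varphi)$; when $i>j$, rewrite via \eqref{ibp for rough path} to swap the arguments and absorb the extra product-of-increments term via \eqref{product of bar}, which costs at most $M_{m,\theta}r^2$ or $M_{m,\theta}r\|\varphi\|_{m,\theta/2}$. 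The ancillary products of $\|\cdot\|_{m,\theta/2}$ norms from Lemma~\ref{Hoelder and Besov} contribute at most $r^2$, $r\|\varphi\|_{m,\theta/2}$, or $\|\varphi\|_{m,\theta/2}^2$ according to the pairing, and the last type does not actually occur. Using $r<1$ to absorb $r^2\le r$ and collecting constants, the total is bounded by $R_{m,\theta}\,r\bigl(5+6\|\varphi\|_{m,\theta/2}\bigr)$.

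The main obstacle is the bookkeeping of index ranges: the definition of $U_r(\varphi)$ only controls $C(\varphi^i,w^j-\varphi^j)$ and $C(w^i-\varphi^i,\varphi^j)$ in the range $i\le j$, whereas the $d_\Omega$-metric in Part (2) and the quadratic defining clusters of $U_{(1+\delta)r}(\varphi_2)$ in Part (1) involve all index pairs. The integration-by-parts identity resolves this mismatch at the cost of the additional $M_{m,\theta}$-factors coming from \eqref{product of bar}, which is precisely why the constant $R_{m,\theta}=\max(M_{m,\theta}^2,N_{m,\theta})$ appears in Part (2).
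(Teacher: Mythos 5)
Your approach for both parts is the same as the paper's: bilinear expansion of the iterated integrals, term-by-term application of Lemma~\ref{pointwise estimate} and Lemma~\ref{Hoelder and Besov}, and the identity \eqref{ibp for rough path} to cope with the index-range mismatch. Part~(1) and the derivation of the symmetric refinement from the one-sided inclusion are fine (your $2\ep^2$ versus the paper's $\ep^2$ is harmless since $\ep<1/3$ forces $2\ep<1$).

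There is, however, a gap in Part~(2) on the diagonal $i=j$. You assert that for every pair with $i\le j$ the $\|\cdot\|_{m,\theta}$ contributions of the three summands are $<r$ ``directly from the definition of $U_r(\varphi)$.'' But the definition \eqref{nbd of varphi} only controls $\|C(w^j-\varphi^j,w^k-\varphi^k)\|_{m,\theta}$ for strict $j<k$; the diagonal summand $C(w^i-\varphi^i,w^i-\varphi^i)$ appears nowhere among the defining conditions. One must instead observe, via the $i=j$ case of \eqref{ibp for rough path}, that $C(x,x)_{s,t}=\tfrac{1}{2}(x_t-x_s)^2$, so this summand is actually a product of increments and is bounded by $\|C(w^i-\varphi^i,w^i-\varphi^i)\|_{H,\theta}\le\tfrac{1}{2}M_{m,\theta}^2r^2$. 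This is exactly why the paper treats $i=j$ as a separate case. Since $r<1$ and $R_{m,\theta}\ge M_{m,\theta}^2$, the extra term still fits under the claimed constant, so the gap is repairable; but as written the step would not go through. A smaller quantitative slip: in the body you write that the product-of-increments terms cost ``$M_{m,\theta}r^2$ or $M_{m,\theta}r\|\varphi\|_{m,\theta/2}$'', but in the $\|\cdot\|_{H,\theta}$-metric the factor is $M_{m,\theta}^2$, since $\|\bar x\cdot\bar y\|_{H,\theta}\le\|x\|_{H,\theta/2}\|y\|_{H,\theta/2}\le M_{m,\theta}^2\|x\|_{m,\theta/2}\|y\|_{m,\theta/2}$; your concluding remark about $R_{m,\theta}=\max(M_{m,\theta}^2,N_{m,\theta})$ has the correct exponent.
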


\begin{proof}
(1)~Let $\ep=\max_i\|\varphi_1^i-\varphi_2^i\|_H$.
Let $w\in U_r(\varphi_1)$.
Then we have
\begin{eqnarray}
\|w^i-\varphi^i_2\|_{m,\theta'/2}&\le&
\|w^i-\varphi^i_1\|_{m,\theta'/2}+
\|\varphi^i_1-\varphi^i_2\|_{m,\theta'/2}
<r+\ep,
\end{eqnarray}
\begin{eqnarray}
\lefteqn{\|C(w^j-\varphi^j_2,w^k-\varphi^k_2)\|_{m,\theta}}\nonumber\\
& &=
\Bigr\|C(w^j-\varphi_1^j,w^k-\varphi_1^k)
+C(\varphi_1^j-\varphi_2^j,w^k-\varphi_1^k)+
C(w^j-\varphi_1^j,\varphi_1^k-\varphi_2^k)\nonumber\\
& &
\qquad +C(\varphi_1^j-\varphi_2^j,\varphi_1^k-\varphi^k_2)
\Bigl\|_{m,\theta}\nonumber\\
& &<
r+3\ep r+\ep^2,
\end{eqnarray}
\begin{eqnarray}
\|C(\varphi_2^i,w^j-\varphi^j_2)\|_{m,\theta}
&=&
\Bigl\|
C(\varphi_1^i,w^j-\varphi_1^j)+
C(\varphi_1^i,\varphi_1^j-\varphi_2^j)+
C(\varphi_2^i-\varphi^i_1,w^j-\varphi^j_1)\nonumber\\
& &\quad+C(\varphi_2^i-\varphi_1^i,\varphi_1^j-\varphi_2^j)
\Bigr\|_{m,\theta}\nonumber\\
&<&
r+\ep\|\varphi_1^i\|_{m,\theta/2}+2\ep r+\ep^2.
\end{eqnarray}
In the above, we have used Lemma~\ref{pointwise estimate}~(2).
Similarly,
\begin{eqnarray}
\|C(w^i-\varphi_2^i,\varphi_2^j)\|_{m,\theta}
&<&
r+\ep r+2\ep\|\varphi_1^j\|_{m,\theta/2}+\ep^2.
\end{eqnarray}
Therefore if
$$
\ep\left(3r+1+2\max_i\|\varphi_1^i\|_{m,\theta/2}
\right)
\le \delta r,
$$
then
$w\in U_{r(1+\delta)}(\varphi_2)$ which proves the first statement.
The second statement follows from the first one.

\noindent
(2)~Assume $w\in U_r(\varphi)$. Let $i<j$.
Since $C(w^i,w^j)-C(\varphi^i,\varphi^j)=
C(w^i-\varphi^i,w^j-\varphi^j)+
C(\varphi^i,w^j-\varphi^j)+
C(w^i-\varphi^i,\varphi^j)$,
noting Lemma~\ref{Hoelder and Besov}, we have
$$
\|C(w^i,w^j)-C(\varphi^i,\varphi^j)\|_{H,\theta}
< 4N_{m,\theta}r(1+\|\varphi^i\|_{m,\theta/2}).
$$
Note that $C(w^i-\varphi^i,w^j-\varphi^j)$
is a limit of iterated integrals of smooth paths and so
we can still apply Lemma~\ref{Hoelder and Besov}.
Let us consider the case where $i=j$.
Since 
\begin{eqnarray}
\lefteqn{C(w^i,w^i)_{s,t}-C(\varphi^i,\varphi^i)_{s,t}}\nonumber\\
& &=\frac{1}{2}\left\{(w^i-\varphi^i)_t-(w^i-\varphi^i)_s\right\}^2
+C(\varphi^i,w^i-\varphi^i)_{s,t}+
C(w^i-\varphi^i,\varphi^i)_{s,t},
\end{eqnarray}
\begin{eqnarray}
\lefteqn{\|C(w^i,w^i)-C(\varphi^i,\varphi^i)\|_{H,\theta}}\nonumber\\
& &\le \frac{1}{2}\|w^i-\varphi^i\|_{H,\theta/2}^2
+\|C(\varphi^i,w^i-\varphi^i)\|_{H,\theta}+
\|C(w^i-\varphi^i,\varphi^i)\|_{H,\theta}\nonumber\\
& &\le \frac{1}{2}M_{m,\theta}^2r^2+2N_{m,\theta}(1+\|\varphi^i\|_{m,\theta/2})r.
\end{eqnarray}
Let $i>j$.
Using (\ref{ibp for rough path}),
we have
\begin{eqnarray}
\lefteqn{C(w^i,w^j)_{s,t}-C(\varphi^i,\varphi^j)_{s,t}}\nonumber\\
& &=
C(\varphi^j,\varphi^i)_{s,t}-C(w^j,w^i)_{s,t}
+\left\{(w^i-\varphi^i)_t-(w^i-\varphi^i)_s\right\}
\left\{(w^j-\varphi^j)_t-(w^j-\varphi^j)_s\right\}\nonumber\\
& &+(\varphi^i_t-\varphi^i_s)
\left\{(w^j-\varphi^j)_t-(w^j-\varphi^j)_s\right\}
+\left\{(w^i-\varphi^i)_t-(w^i-\varphi^i)_s\right\}
(\varphi^j_t-\varphi^j_s).
\end{eqnarray}
Hence
\begin{eqnarray*}
\|C(w^i,w^j)-C(\varphi^i,\varphi^j)\|_{m,\theta}
&\le&
4N_{m,\theta}r(1+\|\varphi^i\|_{m,\theta/2})+
M_{m,\theta}^2r^2+
2rM_{m,\theta}^2\max_i\|\varphi^i\|_{m,\theta/2}
\end{eqnarray*}
which completes the proof of (\ref{inclusion3}).
\end{proof}

\begin{proof}[Proof of Lemma~$\ref{covering}$]
(1)~Suppose that $w\in {\cal D}_{\ep_n,K,N,\kappa}$.
Then $\|w(N)\|_{m,\theta'/2}<K+1$.
By Lemma~\ref{U and V}~(2),
$d_{\Omega}(w(N),w)< 6R_{m,\theta}(K+1)\kappa$.
Hence
$
d_{\Omega}(w(N),0)\le K+
6R_{m,\theta}(K+1)\kappa.
$
By Proposition~\ref{solution of sde}~(2),
\begin{eqnarray}
d(X(1,e,w(N)),e)&\le&
d(X(1,e,w(N)),X(1,e,w))+d(X(1,e,w),e)\nonumber\\
&<& 6R_{m,\theta}(K+1)\kappa F\left(K+6R_{m,\theta}(K+1)\kappa\right)
+\ep_n.
\end{eqnarray}
Hence, if
$$
\kappa<
\kappa(n,p,K,\ep):=
\min\left(\frac{\ep}
{6np R_{m,\theta}(K+1)F\left(K+6R_{m,\theta}(K+1)\right)}
,1\right),
$$
then
$X(1,e,w(N))\in B_{\ep(1-\frac{1}{n}(1-\frac{1}{p}))}(e)$.
Now assume that $\kappa<1/2$.
Let $z\in U_{2\kappa}(w(N))$.
Then
$d_{\Omega}(w(N),z)< 12R_{m,\theta}\left(K+1\right)\kappa$.
Thus
$
d_{\Omega}(0,z)<
18R_{m,\theta}(K+1)\kappa.
$
Therefore
\begin{eqnarray}
\lefteqn{d\left(X(1,e,z),e\right)}\nonumber\\
& &\le
d(X(1,e,z),X(1,e,w(N)))+d(X(1,e,w(N)),e)\nonumber\\
& &<
12R_{m,\theta}(K+1)\kappa F\left(K+18R_{m,\theta}(K+1)\kappa\right)
+
\ep\left(1-\frac{1}{n}(1-\frac{1}{p})\right).
\end{eqnarray}
Consequently if 
$$
\kappa<
\min\left(\frac{1}{2},\kappa(n,p,K,\ep),
\frac{\ep}{12nqF\left(K+
K+18R_{m,\theta}(K+1)\right)
R_{m,\theta}(K+1)
}\right),
$$
$
d(X(1,e,z),e)<\ep\left(1-\frac{1}{n}(1-\frac{1}{p}-\frac{1}{q})\right)
$
holds.
Now we set $p=q=4$ and $\kappa$ to be a positive number such that
\begin{equation}
\kappa<
\min\left(\frac{\ep}{48nF\left(K+18R_{m,\theta}(K+1)\right)
R_{m,\theta}(K+1)}, \frac{1}{2}\right).
\end{equation}
For such a $\kappa$, it holds that
if $w\in {\cal D}_{\ep_n,K,N,\kappa}$ then
$z\in {\cal D}_{\ep_{2n}}$
for any $z\in U_{2\kappa}(w(N))$.
That is, $w\in U_{\kappa}(w(N))\subset U_{2\kappa}(w(N))\subset
{\cal D}_{\ep_{2n}}$.
Applying Lemma~\ref{U and V}~(1) to the case where
$\varphi_1=w(N)$, $\varphi_2=\varphi$, 
$r=\kappa$, $\delta=\sqrt{2}-1, 1/3$,
we have
if
$$
\|\varphi-w(N)\|_H<\frac{\kappa}{
3\left(6\kappa+1+2(K+2)\right)
}
$$
then
$$
w\in U_{\kappa}(w(N))\subset
U_{4\kappa/3}(\varphi)\subset
U_{\sqrt{2}\kappa}(\varphi)
\subset
U_{2\kappa}(w(N))\subset
{\cal D}_{\ep_{2n}}.
$$
This completes the proof of (1) from which
follow
(2) and (3).
\end{proof}

\section{$H$-simply connected set in a Wiener space}

We introduce the following notions.

\begin{dfi}
Let $D$ be an $H$-open and  measurable subset of
$\Omega$ with $\mu(D)>0$.
Here $D$ is said to be $H$-open if for any $w\in D$,
there exists $\ep>0$ such that
$w+\{h\in H~|~\|h\|_H<\ep\}\subset D$.

\noindent
$(1)$~$D$ is called an $H$-connected set if,
whenever $w, w+h\in D$, there exists a $C^{\infty}$ curve 
$h : [0,1]\to H$ such that
$h(0)=0$ and $h(1)=h$ and $w+h(\tau)\in D$ for all $0\le \tau\le 1$.

\noindent
$(2)$~$D$ is called an $H$-simply connected set if
the following holds:
Let us fix any point $w$ of $D$.
Let $\{h(0,\tau)~|~0\le \tau\le 1\}$
and $\{h(1,\tau)~|~0\le \tau\le 1\}$
be $C^{\infty}$ curves on $H$ such that
$h(0,0)=h(1,0)=0$, $h(0,1)=h(1,1)$ and
$\{w+h(i,\tau)~|~0\le \tau\le 1\}\subset D$ for
$i=0,1$. Then there exists
a $C^{\infty}$ map ${\cal H} :[0,1]^2
\to H$ which may depend on $w$ such that
\begin{itemize}
\item[{\rm (i)}]~${\cal H}(0,\tau)=h(0,\tau)$,
${\cal H}(1,\tau)=h(1,\tau)$ for all $0\le \tau\le 1$,
\item[{\rm (ii)}]~${\cal H}(\sigma,0)=0$ and
${\cal H}(\sigma,1)=h(0,1)=h(1,1)$ for all $\sigma$,
\item[{\rm (iii)}]~
$w+{\cal H}(\sigma,\tau)\in D$ holds
for any $(\sigma,\tau)\in [0,1]^2$.
\end{itemize}
\end{dfi}

The ball like set $U_r(\varphi)$ is $H$-connected.
We need the following lemma to prove this statement.
Also this lemma will be used in the proof of
Proposition~\ref{covering2}~(2).

\begin{lem}\label{Urvarphi and H}
Let $\varphi_i\in H$ and $r_i>0$~$(i=1,2)$.
The following three conditions {\rm (i), (ii), (iii)}
are equivalent.
\begin{itemize}
\item[{\rm (i)}]~$\mu\left(U_{r_1}(\varphi_1)\cap U_{r_2}(\varphi_2)\right)>0$.
\item[{\rm (ii)}]~$U_{r_1}(\varphi_1)\cap U_{r_2}(\varphi_2)\ne \emptyset$.
\item[{\rm (iii)}]~
$U_{r_1}(\varphi_1)\cap U_{r_2}(\varphi_2)\cap H
\ne \emptyset$.
\end{itemize}
\end{lem}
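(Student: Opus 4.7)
The implication (i) $\Rightarrow$ (ii) is trivial, so the plan is to establish (ii) $\Rightarrow$ (iii) $\Rightarrow$ (i), proving the three are equivalent.

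For (ii) $\Rightarrow$ (iii), the idea is to approximate a witness $w$ by its dyadic polygonal paths $w(N)$, which lie in $H$. Each condition defining $U_{r_i}(\varphi_i)$ is a strict inequality on one of the quantities $\|w^k-\varphi_i^k\|_{m,\theta'/2}$, $\|C(w^j-\varphi_i^j,w^k-\varphi_i^k)\|_{m,\theta}$, $\|C(\varphi_i^j,w^k-\varphi_i^k)\|_{m,\theta}$, $\|C(w^j-\varphi_i^j,\varphi_i^k)\|_{m,\theta}$. Expanding $C(w(N)-\varphi_i,w(N)-\varphi_i)$ bilinearly produces four terms: $C(w(N),w(N))$, $C(\varphi_i,w(N))$, $C(w(N),\varphi_i)$, and $C(\varphi_i,\varphi_i)$. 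The first converges to $C(w,w)$ in $W_{m,\theta}$ by the definition of $\Omega$ (Theorem~\ref{Omega}); the middle two converge to $C(\varphi_i,w)$, $C(w,\varphi_i)$ because, by the estimates (\ref{Cwvarphi}) and (\ref{Cvarphiw}) of Lemma~\ref{pointwise estimate}, the maps $x\mapsto C(\varphi_i,x),C(x,\varphi_i)$ are continuous $W_{m,\theta'/2}\to W_{m,\theta}$, and $w(N)\to w$ in $W_{m,\theta'/2}$. Together with $\|w(N)^k-\varphi_i^k\|_{m,\theta'/2}\to\|w^k-\varphi_i^k\|_{m,\theta'/2}$, this gives convergence in every defining seminorm; the strictness of the inequalities in the definition of $U_{r_i}(\varphi_i)$ then places $w(N)\in U_{r_1}(\varphi_1)\cap U_{r_2}(\varphi_2)\cap H$ for $N$ sufficiently large.

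For (iii) $\Rightarrow$ (i), fix $h\in U_{r_1}(\varphi_1)\cap U_{r_2}(\varphi_2)\cap H$. By strictness there is $\delta>0$ such that $h$ satisfies every defining inequality of $U_{r_i}(\varphi_i)$ with slack $\delta$ (for $i=1,2$). Write a Wiener path as $w=h+v$. Substituting $w-\varphi_i=(h-\varphi_i)+v$ into each defining quantity and using the triangle inequality together with the bilinearity of $C(\cdot,\cdot)$, the conditions are implied by the simultaneous bounds $\|v^k\|_{m,\theta'/2}<\kappa$, $\|C(v^j,v^k)\|_{m,\theta}<\kappa$, $\|C(h-\varphi_i,v)\|_{m,\theta}<\kappa$, $\|C(v,h-\varphi_i)\|_{m,\theta}<\kappa$, $\|C(\varphi_i,v)\|_{m,\theta}<\kappa$, $\|C(v,\varphi_i)\|_{m,\theta}<\kappa$ for $i=1,2$ and sufficiently small $\kappa>0$. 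By the estimates (\ref{Cwvarphi})--(\ref{Cvarphiw}) of Lemma~\ref{pointwise estimate}, the conditions involving $h-\varphi_i$ are controlled by $\|v\|_{m,\theta'/2}\cdot\|h-\varphi_i\|_H$, so they reduce to smallness of the quantities of the form appearing in the set $U_{\kappa,\psi}$ of (\ref{nbd of 0}), with $\psi$ chosen to incorporate $\varphi_1,\varphi_2,h-\varphi_1,h-\varphi_2$. The paper remarks immediately after (\ref{nbd of varphi}) that $\mu(U_{\kappa,\psi})>0$ for every $\kappa>0$ and $\psi\in H$; hence the set of admissible perturbations $v$ has positive Wiener measure, and therefore so does $U_{r_1}(\varphi_1)\cap U_{r_2}(\varphi_2)$.

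The routine step is (ii) $\Rightarrow$ (iii): it is a bookkeeping exercise using Lemma~\ref{pointwise estimate} and Theorem~\ref{Omega}. The step that requires the most care is (iii) $\Rightarrow$ (i), where one must express the perturbation conditions purely in terms of the shape (\ref{nbd of 0}) so as to appeal to the positive-measure statement for small balls of Wiener paths. Once that reduction is made, the conclusion is immediate from the cited fact and Lemma~\ref{pointwise estimate}.
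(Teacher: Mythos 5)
Your proof follows the paper's argument: (i)$\Rightarrow$(ii) trivially, (ii)$\Rightarrow$(iii) by dyadic approximation $w(N)\to w$, and (iii)$\Rightarrow$(i) by translating a small positive-measure set by $h$ using quasi-invariance — exactly the paper's $U_{\ep}(0)+h$ device. The only (harmless) overcomplication is in (iii)$\Rightarrow$(i): since, by (\ref{Cwvarphi})--(\ref{Cvarphiw}), every cross-term $\|C(\psi,v)\|_{m,\theta}$, $\|C(v,\psi)\|_{m,\theta}$ with $\psi\in\{\varphi_1,\varphi_2,h-\varphi_1,h-\varphi_2\}$ is already bounded by a constant times $\|v\|_{m,\theta/2}\leq\|v\|_{m,\theta'/2}$, you do not need to ``incorporate'' these paths into a single $\psi$ (which doesn't quite parse, as $U_{\kappa,\psi}$ takes a single $\psi\in H$); the admissible set of perturbations simply contains $U_{\kappa'}(0)=U_{\kappa',0}$ for $\kappa'$ small, which is precisely what the paper uses.
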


\begin{proof} 
It is trivial that (i) implies (ii).
The implication (ii) $\Longrightarrow$ (iii) follows 
from that\\
$\lim_{N\to\infty}d_{\Omega}(w(N),w)=0$ for any
$w\in \Omega$.
We prove (iii) implies (i).
By the assumption, there exists $h\in U_{r_1}(\varphi_1)
\cap U_{r_2}(\varphi_2)\cap H$.
Let $\ep$ be a sufficiently small positive number.
Let $w\in U_{\ep}(0)$.
Then $w+h\in U_{r_1}(\varphi_1)\cap U_{r_2}(\varphi_2)$
and $\mu\left(U_{\ep}(0)+h\right)>0$.
This proves (i).
\end{proof}

\begin{lem}\label{H-connectivity}
Let $D_i=U_{r_i}(\varphi_i)$~$(1\le i\le n)$.
Assume that $\left(\cup_{i=1}^kD_i\right)\cap D_{k+1}\ne \emptyset$.
Then $D=\cup_{i=1}^nD_i$ is an $H$-connected set.
\end{lem}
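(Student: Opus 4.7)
The plan is to prove the lemma by induction on $n$.

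Base case $n=1$: Show that $D_1=U_{r_1}(\varphi_1)$ is itself $H$-connected. Given $w,w+h\in U_{r_1}(\varphi_1)$ with $h\in H$, I would propose the straight-line homotopy $h(\tau)=\tau h$. Writing $A=w-\varphi_1$ and $B=w+h-\varphi_1$, three of the four defining conditions of $U_{r_1}(\varphi_1)$---namely the ones involving $\|\cdot\|_{m,\theta'/2}$, $\|C(\varphi_1^i,\cdot)\|_{m,\theta}$, and $\|C(\cdot,\varphi_1^j)\|_{m,\theta}$---are preserved along this path by convexity or by the linearity of $C$ in one of its arguments. For the remaining quadratic condition, the bilinearity identity
\[
C((1-\tau)A^j+\tau B^j,(1-\tau)A^k+\tau B^k)=(1-\tau)C(A^j,A^k)+\tau C(B^j,B^k)-\tau(1-\tau)C(h^j,h^k)
\]
gives, via the triangle inequality, only the weaker bound $\|C(\cdot,\cdot)\|_{m,\theta}<r+\tau(1-\tau)\|C(h^j,h^k)\|_{m,\theta}$. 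To absorb the correction I would modify the path to $h(\tau)=\tau h+\tau(1-\tau)\xi$ for a suitable $\xi\in H$, chosen with the help of Lemma~\ref{pointwise estimate} so that the added degrees of freedom compensate the quadratic defect while leaving the three convex conditions intact.

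Inductive step: Assume $\cup_{i=1}^k D_i$ is $H$-connected. Given $w,w+h\in\cup_{i=1}^{k+1}D_i$, the only nontrivial case is $w\in D_p\subseteq\cup_{i=1}^k D_i$ while $w+h\in D_{k+1}$ (or vice versa). I would locate a hub $h^*\in H$ with $w+h^*\in(\cup_{i=1}^k D_i)\cap D_{k+1}$ and then concatenate: an $H$-curve in $\cup_{i=1}^k D_i$ from $w$ to $w+h^*$ supplied by the inductive hypothesis, followed by an $H$-curve in $D_{k+1}$ from $w+h^*$ to $w+h$ supplied by the base case. The hypothesis $(\cup_{i=1}^k D_i)\cap D_{k+1}\ne\emptyset$ together with Lemma~\ref{Urvarphi and H} provides an $H$-point in the intersection; combining this with the $H$-openness of each $D_i$ lets one displace the hub onto the fiber $w+H$.

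The main obstacle is the base case. The set $F_w=\{h'\in H:w+h'\in U_r(\varphi)\}$ is cut out by inequalities involving a genuinely quadratic function of $h'$ (through the iterated integral $C(A^j+h'^j,A^k+h'^k)$), and sublevel sets of such functionals need not be convex or even connected, so the naive straight line can leave $U_r(\varphi)$. Verifying that the corrective perturbation $\xi$ can be chosen to keep all four defining inequalities simultaneously valid is the most delicate analytic point; this is where Lemma~\ref{pointwise estimate} and the chain structure inherited from Lemma~\ref{Urvarphi and H} are essential. A secondary subtlety in the inductive step is the fiber-alignment issue: the $H$-point produced by Lemma~\ref{Urvarphi and H} lies in $H$ itself rather than on $w+H$, so the induction must extract a hub living on the correct fiber, which I expect to accomplish by exploiting $H$-openness and the base case applied inside each $D_i$ containing $w$.
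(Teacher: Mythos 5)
There is a genuine gap, and it sits exactly where you flagged the difficulty: the base case. You correctly compute that the straight line from $w$ to $w+h$ picks up the quadratic defect $-\tau(1-\tau)C(h^j,h^k)$, but the proposed cure --- perturbing to $h(\tau)=\tau h+\tau(1-\tau)\xi$ --- does not work. The correction that $\xi$ contributes to the second-level condition consists of terms of the form $C(\xi^j,\cdot)+C(\cdot,\xi^k)$ (linear in $\xi$, with $\tau$-dependent second arguments) plus $\tau^2(1-\tau)^2C(\xi^j,\xi^k)$; asking these to cancel, or even dominate, the given two-parameter function $C(h^j,h^k)_{s,t}$ simultaneously for all pairs $(j,k)$ and all $\tau$ is a heavily overdetermined problem with a single unknown $\xi\in H$, and $\|C(h^j,h^k)\|_{m,\theta}$ can be of order $\|h\|_H^2\gg r$. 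The paper avoids connecting two arbitrary points of $U_r(\varphi)$ directly: it routes the path through (a perturbation of) the \emph{center} $\varphi$. Along $w+\tau(\varphi-w(N))$ the displacement from the center is $(1-\tau)(w-\varphi)+\tau w(N)^{\perp}$, whose iterated integrals split into $(1-\tau)^2C(w-\varphi,w-\varphi)$ (norm $\le(1-\tau)^2r<r$) plus cross terms and $C(w(N)^{\perp},w(N)^{\perp})$, all of which tend to $0$ as $N\to\infty$ by Theorem~\ref{Omega}(iii) and Lemma~\ref{pointwise estimate}; the defect is absorbed by the margin $(1-(1-\tau)^2)r$ rather than cancelled. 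This star-shapedness about the center is what makes the base case work.

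The same device resolves your ``fiber-alignment'' worry, which your sketch also leaves open: $H$-openness plus Lemma~\ref{Urvarphi and H} give you a point $\psi_k\in D_k\cap D_{k+1}\cap H$, but that point is generally not on the fiber $w+H$ (since $w\notin H$), and an $H$-ball around $\psi_k$ will not reach that fiber. The paper translates both the centers and the intersection witnesses by the high-frequency tail of $w$, setting $\varphi_{k,w(N)^{\perp}}=\varphi_k+w(N)^{\perp}$ and $\psi_{k,w(N)^{\perp}}=\psi_k+w(N)^{\perp}$; these lie on $w+H$ because $\psi_k+w(N)^{\perp}-w=\psi_k-w(N)\in H$, and they still lie in the relevant sets for $N$ large because $w(N)^{\perp}\to0$ in all the norms defining $U_r(\varphi)$. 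The full proof is then a single piecewise-linear path $w\to\varphi_{1,w(N)^{\perp}}\to\psi_{1,w(N)^{\perp}}\to\varphi_{2,w(N)^{\perp}}\to\cdots\to\varphi_{i,w(N)^{\perp}}\to w+h$ (no induction needed), followed by smoothing. Your chain architecture for the inductive step is the right shape, but without the $w(N)^{\perp}$-translation trick neither the single-ball connectivity nor the hub-on-the-fiber step goes through.
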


\begin{proof} 
Clearly, $D_i, D$ are $H$-open sets.
Let $w,w+h\in D$.
Without loss of generality, we may assume that
$w\in D_1$, $w+h\in D_i$ and
$D_{k}\cap D_{k+1}\ne \emptyset$ for all
$1\le k\le i-1$.
Let $\psi_k\in D_k\cap D_{k+1}\cap H$.
Let $\varphi_{k,w(N)^{\perp}}=\varphi_k+w(N)^{\perp}$
and $\psi_{k,w(N)^{\perp}}=\psi_k+w(N)^{\perp}$
Then for sufficiently large $N$, it holds that 
\begin{eqnarray}
\{(1-\tau)\varphi_{k,w(N)^{\perp}}+\tau\psi_{k,w(N)^{\perp}}
~|~0\le \tau\le 1\}&\subset& D_k\quad (k=1,\ldots,i-1),\\
\{(1-\tau)\psi_{k-1,w(N)^{\perp}}+\tau\varphi_{k,w(N)^{\perp}}~
|~0\le \tau\le 1\}
&\subset& D_k \quad (k=2,\ldots,i)\\
\{(1-\tau)w+\tau\varphi_{1,w(N)^{\perp}}~|~0\le \tau\le 1\}&\subset& D_1,\\
\{(1-\tau)(w+h)+\tau\varphi_{i,w(N)^{\perp}}~|~0\le \tau\le 1\}&\subset&
D_i.
\end{eqnarray}
This follows from Theorem~\ref{Omega}.
Hence, we have proved the existence of a piecewise linear path
$h=h(\tau)$~$(0\le \tau\le 1)$ such that
$h(0)=0$, $h(1)=h$ and $w+h(\tau)\subset D$ for all $0\le \tau\le 1$.
Note that if $\sup_{\tau}\|\tilde{h}(\tau)-h(\tau)\|_H$ is sufficiently small,
then $\{w+\tilde{h}(\tau)~|~0\le \tau\le i+1\}\subset D$.
Thus we see the existence of a smooth path connecting $w$ and $w+h$.
\end{proof}

The space of mapping, $H^1([0,1]\to G)$, is a 
$C^{\infty}$-Hilbert manifold naturally.
In the lemma below, we use this differentiable structure.

\begin{lem}\label{simply connectedness}
Assume that $G$ is a simply
connected compact Lie group.
Let $V$ be an open set of $G$ which is diffeomorphic to
a ball in a Euclidean space.
Let
$$
H^1_{V}=\{\gamma \in H^1([0,1]\to G)~|~
\gamma_0=e, \gamma_1\in V\}.
$$
Let $\{\gamma(i,\tau)~|~0\le \tau\le 1\}\subset H^1_V$ ~$(i=0,1)$
be two $C^{\infty}$-curves with the same
starting point and end point in $H_{V}^1$, 
that is, we assume
$$
\gamma(0,0)=\gamma(1,0)\in H^1_V, \gamma(0,1)=\gamma(1,1)\in H_V^1.
$$
Then there exists a $C^{\infty}$-homotopy map ${\cal M} :
(\sigma,\tau)(\in [0,1]^2)\mapsto {\cal M}(\sigma,\tau)\in H_V^1$
such that 
\begin{itemize}
\item[{\rm (i)}]~${\cal M}(0,\tau)=\gamma(0,\tau)$ and
${\cal M}(1,\tau)=\gamma(1,\tau)$ for all $\tau$,
\item[{\rm (ii)}]~${\cal M}(\sigma,0)=\gamma(0,0)=\gamma(1,0)$
and ${\cal M}(\sigma,1)=\gamma(0,1)=\gamma(1,1)$ for all 
$\sigma$.
\end{itemize}
\end{lem}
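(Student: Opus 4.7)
The plan is to deduce the statement from the algebraic topology of path spaces, using the fact that $\pi_2(G)=0$ for any simply connected compact Lie group (Cartan's theorem), and then to smooth the resulting continuous homotopy.

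First I would view $H^1_e := \{\gamma\in H^1([0,1]\to G)~|~\gamma_0=e\}$ as a $C^\infty$-Hilbert manifold in the usual way, and set $H^1_V = \mathrm{ev}_1^{-1}(V)$, where $\mathrm{ev}_1 : H^1_e \to G$ is the endpoint evaluation $\gamma\mapsto \gamma_1$. This $\mathrm{ev}_1$ is a smooth submersion and in fact a smooth locally trivial fibration (one can build local trivializations using the right translation by a smooth local section $V\to H^1_e$ of $\mathrm{ev}_1$, e.g.\ by sending $a\in V$ to the path $t\mapsto \exp(t\log a)$ for $V$ small enough, or more generally to any smooth family of $H^1$-paths from $e$ to $a$). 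The fiber over $e$ is the based $H^1$-loop group $\Omega^{H^1}_e G$.

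Next, since $V$ is diffeomorphic to a ball, it is smoothly contractible, and contracting $V$ to $\{e\}$ and lifting through the fibration yields a smooth strong deformation retraction of $H^1_V$ onto $\Omega^{H^1}_e G$. In particular $H^1_V$ is homotopy equivalent to $\Omega^{H^1}_e G$, so
\begin{equation*}
\pi_1(H^1_V) \simeq \pi_1(\Omega^{H^1}_e G) \simeq \pi_2(G) = 0,
\end{equation*}
where the last equality uses that $G$ is a simply connected compact Lie group. Concatenating $\gamma(0,\cdot)$ with the reverse of $\gamma(1,\cdot)$ gives a loop in $H^1_V$ based at $\gamma(0,0)$; its class in $\pi_1(H^1_V,\gamma(0,0))$ is trivial, which produces a continuous homotopy $\widetilde{\mathcal M}: [0,1]^2\to H^1_V$ satisfying (i)\,(ii) of the lemma.

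Finally I would upgrade $\widetilde{\mathcal M}$ to a $C^\infty$-map $\mathcal M$. This is a standard smoothing on the Hilbert manifold $H^1_e$: cover $\widetilde{\mathcal M}([0,1]^2)$ by finitely many chart neighborhoods contained in $H^1_V$, write $\widetilde{\mathcal M}$ in local coordinates, and mollify by convolution with a smooth kernel in the $(\sigma,\tau)$ variables using a smooth partition of unity. Because $H^1_V$ is $H^1_e$-open and $\widetilde{\mathcal M}([0,1]^2)$ is compact, the mollified maps stay in $H^1_V$ if the mollification parameter is small enough. The boundary conditions $(\sigma,\tau)\in\partial[0,1]^2$ are preserved by mollifying only with a cutoff that is the identity near $\partial[0,1]^2$, i.e.\ by interpolating between $\widetilde{\mathcal M}$ and its smoothing via a smooth function of $\sigma,\tau$ that is $0$ near the boundary and $1$ in the interior; since $\widetilde{\mathcal M}$ is already smooth on the boundary (by hypothesis $\gamma(i,\cdot)$ and the constant curves at $\gamma(\cdot,0),\gamma(\cdot,1)$ are smooth), the resulting $\mathcal M$ is $C^\infty$ on $[0,1]^2$.

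The main obstacle is the last step: simultaneously keeping the smoothing inside the open set $H^1_V$ and matching the prescribed smooth boundary data on $\partial[0,1]^2$. Both are routine in finite dimensions, and transfer to the Hilbert manifold $H^1_e$ via local charts provided one works with a finite sub-cover of the compact image $\widetilde{\mathcal M}([0,1]^2)$; no essentially new analytic difficulty arises because $H^1_V$ is $H^1_e$-open and the boundary data are already $C^\infty$.
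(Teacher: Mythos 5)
Your argument is correct and follows the same approach as the paper: the paper's proof also reduces the claim to the fact that $\pi_2(G)=0$ implies the based loop space has trivial $\pi_1$ (citing Bott and Pressley--Segal for the topological result), and then invokes a smoothing of the continuous homotopy. You simply make explicit the intermediate steps (the fibration $\mathrm{ev}_1:H^1_e\to G$, the deformation retraction of $H^1_V$ onto the fiber using contractibility of $V$, and the mollification argument in charts) that the paper leaves to the reader.
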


\begin{proof}
This follows from that $\pi_2(G)=0$ and
so $\pi_1(L_e(G))=0$.
See \cite{bott} and \cite{segal}.
This is the result in continuous category.
In the case of $H^1$-paths, it suffices to approximate the
continuous homotopy by a smooth homotopy.
\end{proof}

\begin{pro}\label{covering2}
Assume that $G$ is a simply connected
compact Lie group.

\noindent
$(1)$~The subset ${\cal D}_{\ep}$ is an $H$-connected and
$H$-simply connected set for sufficiently small $\ep$.

\noindent
$(2)$~Let $\{U_{4\kappa_i/3}(\varphi_i), i=1,2,\ldots\}$
be the sets which are defined in Lemma~$\ref{covering}$~$(3)$.
Then if necessary, by changing the order of the sets,
we have
$$
\mu\left(\left(\cup_{i=1}^nU_{4\kappa_{i}/3}(\varphi_{i})\right)
\cap U_{4\kappa_{n+1}/3}(\varphi_{n+1})\right)>0
\qquad \mbox{for all $n\ge 1$}.
$$
\end{pro}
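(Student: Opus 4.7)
The plan is to transport the connectivity/simple-connectedness question from ${\cal D}_{\ep} \subset \Omega$ to the based $H^1$-path space of $G$, where it becomes a classical topological question resolvable by $\pi_1(G) = 0$ and Lemma~\ref{simply connectedness}. Fix $w \in {\cal D}_{\ep}$ and $g_0 = X(1,e,w)$. By Proposition~\ref{solution of sde}~(3)--(6), the map $h \mapsto Z(\cdot, h, w)$ is a smooth bijection between $H$ and the space of $H^1$-paths in $G$ starting at $e$, with inverse $\phi \mapsto \zeta(\phi, w)$, and $X(1, e, w+h) = Z(1, h, w) \cdot g_0$. Consequently $w+h \in {\cal D}_{\ep}$ if and only if $Z(\cdot, h, w) \in H^1_V$, where $V := B_{\ep}(e) \cdot g_0^{-1}$ is an open set diffeomorphic to a ball and containing $e$ (for small $\ep$). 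This dictionary is continuous in the sense needed.

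For part (1), $H$-connectedness of ${\cal D}_{\ep}$ reduces to path-connectedness of $H^1_V$: given $w+h \in {\cal D}_{\ep}$, I will find a smooth $\psi : [0,1] \to H^1_V$ with $\psi(0) \equiv e$ and $\psi(1) = Z(\cdot, h, w)$, and then set $h(\tau) = \zeta(\psi(\tau), w)$. Path-connectedness of $H^1_V$ follows from its being a fibration over the contractible set $V$ (by $\phi \mapsto \phi_1$) with fiber homotopy equivalent to $L_e(G)$, whose $\pi_0 = \pi_1(G)$ vanishes. For $H$-simple connectedness, two $H$-curves $h(i, \tau)$ with the prescribed common endpoints correspond via $\psi(i, \tau) := Z(\cdot, h(i, \tau), w)$ to two $C^\infty$-curves in $H^1_V$ with matching start and end; Lemma~\ref{simply connectedness}, which encodes $\pi_1(H^1_V) = \pi_2(G) = 0$ in smooth form, then provides the required $C^\infty$-homotopy ${\cal M}(\sigma,\tau) \in H^1_V$, and ${\cal H}(\sigma, \tau) := \zeta({\cal M}(\sigma, \tau), w)$ is the resulting $H$-valued homotopy.

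For part (2), consider the graph $\Gamma$ on ${\mathbb N}$ with $i \sim j$ iff $U_i \cap U_j \ne \emptyset$ (equivalently, by Lemma~\ref{Urvarphi and H}, iff $\mu(U_i \cap U_j) > 0$), where $U_i := U_{4\kappa_i/3}(\varphi_i)$. If $\Gamma$ were disconnected with components $I$ and $J = {\mathbb N}\setminus I$, then $A = \bigcup_{i \in I} U_i$ and $B = \bigcup_{j \in J} U_j$ would be disjoint, non-empty, $H$-open, and cover ${\cal D}_{\ep}$; an $H$-path furnished by part (1) joining $w \in A$ to $w+h \in B$ would partition $[0,1]$ into two disjoint non-empty open sets, a contradiction (openness of the preimages uses directly that each $U_i$ is $H$-open). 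Once $\Gamma$ is known to be connected, a breadth-first reordering starting from $U_1$ produces the desired sequence: at each step $n$, connectedness guarantees some not-yet-chosen $U_j$ adjacent to $\bigcup_{i \le n} U_i$, and every index is reached after finitely many steps since each vertex of a connected graph is at finite distance from $U_1$; Lemma~\ref{Urvarphi and H} upgrades the non-empty intersection to positive measure.

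The main obstacle is carrying out part (1) cleanly: constructing a smooth lift $\psi$, respectively a smooth homotopy ${\cal M}$, in $H^1_V$ with the correct endpoint behavior, and verifying that $\zeta$ transports these back to genuine smooth $H$-valued curves/homotopies staying in ${\cal D}_{\ep}$. The topological inputs themselves (vanishing of $\pi_1(G)$ and $\pi_2(G)$ for a simply connected compact Lie group) are classical; the nontrivial content is the translation between the Wiener side and the $H^1$-path side, which is exactly what Proposition~\ref{solution of sde}~(3)--(6) is set up to provide.
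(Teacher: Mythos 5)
Your plan for part (1) is essentially the paper's own argument: fix $w$, use Proposition~\ref{solution of sde}~(3)--(6) to translate $h\mapsto Z(\cdot,h,w)$ with inverse $\zeta(\cdot,w)$, observe that $w+h\in{\cal D}_{\ep}$ iff $Z(\cdot,h,w)$ lands in $H^1_V$ with $V=X^{-1}(1,\cdot,w)(B_{\ep}(e))=B_{\ep}(e)g_0^{-1}$ a contractible open set, and then pull the connectivity of $H^1_V$ (from $\pi_1(G)=0$) and the homotopy of Lemma~\ref{simply connectedness} (from $\pi_2(G)=0$) back through $\zeta$. That is exactly what the paper does, so no comment beyond: the transport really is set up by Proposition~\ref{solution of sde}, and the paper treats the smooth lifts with the same brevity you anticipate.

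For part (2) you take a genuinely different route. The paper observes that ${\cal D}_{\ep}\cap H$ is diffeomorphic to $H^1_{B_{\ep}(e)}$, hence an open \emph{connected} subset of the Hilbert space $H$, covered by the open sets $U_{4\kappa_i/3}(\varphi_i)\cap H$; the reordering is then the standard chaining fact for open covers of a connected open set, and Lemma~\ref{Urvarphi and H} upgrades nonempty intersection to positive measure. You instead run a graph-connectivity argument on the full sets $U_{4\kappa_i/3}(\varphi_i)\subset\Omega$, using the $H$-connectedness of ${\cal D}_{\ep}$ from part (1) to rule out a disconnection $A\sqcup B$. This works, but it has one small point you must supply: $H$-connectedness only joins points of the form $w$ and $w+h$ with $h\in H$, and two arbitrary points of $A$ and $B$ need not differ by a Cameron--Martin element. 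The fix is immediate --- each $U_{4\kappa_i/3}(\varphi_i)$ contains $\varphi_i\in H$ (indeed $H\subset\Omega$ and all defining quantities vanish at $w=\varphi_i$), so take $w=\varphi_{i_0}\in A$ and $w+h=\varphi_{j_0}\in B$; then the preimages of $A$ and $B$ under $\tau\mapsto w+h(\tau)$ are open (by $H$-openness of the $U_i$ and continuity of $\tau\mapsto h(\tau)$ in $H$) and disconnect $[0,1]$. Also, your ``breadth-first'' enumeration should be stated as a greedy/interleaved enumeration, since a level of the BFS tree may be infinite; the standard argument (always pick the smallest eligible unused index) does terminate on every vertex. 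With these two touches your version of (2) is a valid, and arguably more self-contained, alternative to the paper's appeal to connectedness of ${\cal D}_{\ep}\cap H$; what the paper's route buys is that it never leaves $H$, so the $h\in H$ issue never arises.
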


\begin{proof}~(1)~
First we prove that ${\cal D}_{\ep}$ is an $H$-connected set.
Assume that $w,w+h\in {\cal D}_{\ep}$.
Then $X(1,e,w+h), X(1,e,w)\in B_{\ep}(e)$.
Let $Z(t,h,w)$ be the $H^1$-path in Proposition~\ref{solution of sde}.
Since $X(1,e,w+h)=X(1,Z(1,h,w),w)$,
$t\mapsto Z(t,h,w)$ is a $H^1$-curve on $G$
starting at $e$ and $Z(1,h,w)\in X^{-1}(1,\cdot,w)(B_{\ep}(e))$.
Also $e\in X^{-1}(1,\cdot,w)(B_{\ep}(e))$ holds.
Since $G$ is simply connected and $X^{-1}(1,\cdot,w)\left(B_{\ep}(e)\right)$ 
is a contractive set, 
there exists a map $(\tau,t)\in [0,1]^2\mapsto \gamma^{h,w}(\tau)_{t}\in G$ 
such that
\begin{itemize}
\item[(i)]~$\gamma^{h,w}(0)_{t}=e$ and $\gamma^{h,w}(1)_{t}=Z(t,h,w)$
for all $0\le t\le1$,
\item[(ii)]~$\tau\in [0,1]\mapsto \gamma^{h,w}(\tau)$ is a 
$C^{\infty}$-map with values in
$H^1_{X^{-1}(1,\cdot,w)(B_{\ep}(e))}$.
\end{itemize}
Now we define $h(\tau)=\zeta(\gamma^{h,w}(\tau),w)$.
See Proposition~\ref{solution of sde} for the definition of $\zeta$.
The mapping $\tau(\in [0,1])\mapsto h(\tau)$ is a $C^{\infty}$-curve on $H$.
Also $X(t,\gamma^{h,w}(\tau)_{t},w)
=X(t,e,w+h(\tau))$~$((\tau,t)\in [0,1]^2)$ holds
by the definition.
Therefore $h(0)=0$, $h(1)=h$ and $X(1,e,w+h(\tau))\in B_{\ep}(e)$
for all $0\le \tau \le 1$.
This proves that ${\cal D}_{\ep}$ is an $H$-connected set.
Next we prove the $H$-simply connectedness of ${\cal D}_{\ep}$.
Let $\tau\in [0,1]\mapsto h(i,\tau)
\in H$~$(i=0,1)$ be $C^{\infty}$-curves on $H$ such that
\begin{itemize}
\item[(i)]~$w+h(i,\tau)\in {\cal D}_{\ep}$ for all $0\le \tau\le 1$ and
	  $i=0,1$.
\item[(ii)]~$h(0,0)=h(1,0)=0$, $h(0,1)=h(1,1)$.
\end{itemize}
Then 
$Z(t,h(0,0),w)=Z(t,h(1,0),w)=e$ and
$Z(t,h(0,1),w)=Z(t,h(1,1),w)$ hold for all
$0\le t\le 1$.
Also $t\mapsto Z(t,h(i,\tau),w)$ is a $H^1$-curve on $G$
starting at $e$ and the end point
$Z(1,h(i,\tau),w)\in X^{-1}(1,\cdot,w)(B_{\ep}(e))$
for all $0\le \tau\le 1$ and $i=0,1$.
Therefore $\tau\mapsto Z(\cdot,h(i,\tau),w)$ is a $C^1$-map
from $[0,1]$ to $H^1_{X^{-1}(1,\cdot,w)(B_{\ep}(e))}$.
Since $B_{\ep}(e)$ is a contractive set,
$H^1_{X^{-1}(1,\cdot,w)(B_{\ep}(e))}$ is
also a simply connected set by Lemma~\ref{simply connectedness}.
Therefore
there exists a $C^{\infty}$ homotopy map 
\begin{equation}
(\sigma,\tau)(\in [0,1]^2)\mapsto {\cal M}^{h,w}(\sigma,\tau)\in 
H^1_{X^{-1}(1,\cdot,w)(B_{\ep}(e))}
\end{equation}
such that
\begin{itemize}
\item[(i)]~${\cal M}^{h,w}(i,\tau)_t=Z(t,h(i,\tau),w)$ 
for all $0\le \tau, t\le 1$ and $i=0,1$,
\item[(ii)]~${\cal M}^{h,w}(\sigma,0)_t=Z(t,h(0,0),w)=Z(t,h(1,0),w)=e$
and ${\cal M}^{h,w}(\sigma,1)_t=Z(t,h(0,1),w)=Z(t,h(1,1),w)$
for all $0\le \sigma\le 1$.
\end{itemize}
Let
\begin{equation}
{\cal H}(\sigma,\tau)=\zeta\left({\cal M}^{h,w}(\sigma,\tau),w\right).
\end{equation}
Then 
\begin{itemize}
\item[(i)]~
${\cal H}(i,\tau)=h(i,\tau)$
for all $0\le \tau\le 1$ and $i=0,1$,
\item[(ii)]~For all $\sigma$,
${\cal H}(\sigma,0)=0$ and ${\cal H}(\sigma,1)=h(0,1)=h(1,1)$,
\item[(iii)]~The mapping
$(\sigma,\tau)\in [0,1]^2\mapsto {\cal H}(\sigma,\tau)\in H$ is
$C^{\infty}$,
\item[(iv)]~
$w+{\cal H}(\sigma,\tau)\in {\cal D}_{\ep}$ for all $(\sigma,\tau)$.
\end{itemize}
These complete the proof.

\noindent
(2)~Since the map $h(\in H)\mapsto X(\cdot,e,h)(\in H^1([0,1]\to G~|~\gamma(0)=e))$
is a diffeomorphism, ${\cal D}_{\ep}\cap H$ is diffeomorphic to
$H^1_{B_{\ep}(e)}$.
Hence, ${\cal D}_{\ep}\cap H$ is an open connected subset
of $H$. Since $U_{4\kappa_i/3}(\varphi_i)\cap H$ is an open subset of
$H$ and ${\cal D}_{\ep}\cap H=\cup_{i=1}^{\infty}
\left(U_{4\kappa_i/3}(\varphi_i)\cap H\right)$,
it is an easy exercise to show that if necessary, by changing the order of the sets,
we have 
$$
\cup_{i=1}^n\left(U_{4\kappa_i/3}(\varphi_i)\cap H\right)\cap 
U_{4\kappa_{n+1}/3}(\varphi_{n+1})\ne \emptyset\qquad \mbox{for all $n=1,2,\ldots$}.
$$
Thus, by Lemma~\ref{Urvarphi and H}, we complete the proof.
\end{proof}

\begin{lem}[Stokes theorem in $H$-direction]\label{stokes}
~$(1)$~Let $f\in {\mathbb D}^{1,q}(W^d)$, where $q>1$.
Then for any $C^1$-curve $h=h(\tau)$~$(0\le \tau\le 1)$ on $H$,
we have
\begin{equation}
f(w+h(1))=f(w+h(0))+\int_0^1\left((Df)(w+h(t)),\dot{h}(t)\right)_Hdt
\quad \mbox{$\mu$-almost all $w$}.
\end{equation}

\noindent
$(2)$~
Let $\beta\in {\mathbb D}^{1,q}(W^d,H^{\ast})$, where
$q>1$.
Let $\calh=\calh(\sigma,\tau)$~$((\sigma,\tau)\in [0,1]^2)$ be a
$C^2$-map with values in $H$. We assume that $\calh(\sigma,0)=\calh(0,0)$ and
$\calh(\sigma,1)=\calh(0,1)$ for all $0\le \sigma\le 1$.
Then it holds that
\begin{eqnarray}
\lefteqn{\int_{0}^1\left(\beta(w+\calh(1,\tau)), \partial_{\tau} 
\calh(1,\tau)\right)d\tau
-\int_0^1\left(\beta(w+\calh(0,\tau)),\partial_{\tau}\calh(0,\tau)\right)
d\tau}
\nonumber\\
& &
=\iint_{(\sigma,\tau)\in [0,1]^2}
(d\beta)(w+\calh(\sigma,\tau))
\left(\partial_{\sigma}\calh(\sigma,\tau),\partial_{\tau}
\calh(\sigma,\tau)\right)
d\sigma d\tau
~~\mbox{$\mu$-almost all $w$}.
\end{eqnarray}
\end{lem}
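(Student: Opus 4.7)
The plan is to first establish both parts for smooth cylindrical integrands, where classical calculus applies directly, and then extend to the full Sobolev class by approximation combined with the Cameron--Martin quasi-invariance of $\mu$.

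For (1), when $f\in \FC(W^d)$ the map $\tau\mapsto f(w+h(\tau))$ is $C^1$, since $f$ depends on $w$ through finitely many Wiener integrals and is smooth in those variables; the classical chain rule, together with the definition of $D$, yields $\frac{d}{d\tau}f(w+h(\tau))=(Df(w+h(\tau)),\dot h(\tau))_H$, and the fundamental theorem of calculus gives the identity pointwise in $w$. To extend to $f\in {\mathbb D}^{1,q}(W^d)$, choose $f_n\in \FC(W^d)$ with $\|f_n-f\|_{1,q}\to 0$. For each fixed $\tau\in[0,1]$ and each $q'\in (1,q)$ the Cameron--Martin formula gives
$$
\int_{W^d}|f_n(w+h(\tau))-f(w+h(\tau))|^{q'}\,d\mu(w)
=\int_{W^d}|f_n-f|^{q'}\,\rho_{h(\tau)}\,d\mu,
$$
where $\rho_{h(\tau)}$ is the exponential Radon--Nikodym density of the translation by $h(\tau)$. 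Since $\tau\mapsto h(\tau)\in H$ is continuous on the compact interval $[0,1]$, $\sup_{\tau}\|\rho_{h(\tau)}\|_{L^r(\mu)}<\infty$ for every $r>1$, so H\"older's inequality gives a $\tau$-uniform $L^{q'}$ bound and Fubini allows both $f_n(w+h(\tau))\to f(w+h(\tau))$ and $(Df_n(w+h(\tau)),\dot h(\tau))\to (Df(w+h(\tau)),\dot h(\tau))$ to pass through the $d\tau$ integral. Passing to a subsequence that converges $\mu$-a.e.\ in $w$ after $\tau$-integration yields (1).

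For (2), view the integrand as the pullback of $\beta$ by $\Phi:[0,1]^2\to W^d$, $\Phi(\sigma,\tau)=w+\calh(\sigma,\tau)$. When $\beta\in \FC(W^d,H^*)$ is smooth cylindrical, the $C^2$ regularity of $\calh$ makes $\Phi^{\ast}\beta$ a $C^1$ $1$-form on $[0,1]^2$ with coefficient $\beta(w+\calh)(\partial_\sigma\calh)$ in the $d\sigma$ direction and $\beta(w+\calh)(\partial_\tau\calh)$ in the $d\tau$ direction, and its exterior derivative equals $(d\beta)(w+\calh)(\partial_\sigma\calh,\partial_\tau\calh)\,d\sigma\wedge d\tau$. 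Applying the classical Stokes theorem on $[0,1]^2$ produces four boundary integrals; the two along $\tau=0$ and $\tau=1$ vanish because $\calh(\sigma,0)$ and $\calh(\sigma,1)$ are independent of $\sigma$, so $\partial_\sigma\calh=0$ on those edges, and the other two reproduce the left hand side of the claim. Extension to $\beta\in {\mathbb D}^{1,q}(W^d,H^*)$ proceeds exactly as in (1), now using the $(\sigma,\tau)$-uniform bound $\sup_{(\sigma,\tau)\in[0,1]^2}\|\rho_{\calh(\sigma,\tau)}\|_{L^r(\mu)}<\infty$, which holds because $\calh$ is continuous on the compact square.

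The main obstacle is keeping track of the exceptional $\mu$-null set on which the identity could fail: the density argument a priori produces a null set depending on $\tau$ (or $(\sigma,\tau)$). One handles this by extracting a subsequence so that $f_n(w+h(\tau))\to f(w+h(\tau))$ and $Df_n(w+h(\tau))\to Df(w+h(\tau))$ converge in $L^{q'}(d\tau\otimes d\mu)$; this is the exact integrability needed to interchange the limit with the $\tau$-integral and deduce the identity for $\mu$-a.e.\ $w$, with the boundary values $f(w+h(0))$ and $f(w+h(1))$ taken as a.e.\ limits along the same subsequence. The analogous argument on the square handles (2).
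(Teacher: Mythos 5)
Your proof is correct and follows essentially the same strategy as the paper: establish the identity for smooth cylindrical integrands (where in part (2) the paper's explicit computation of $\phi'(\sigma)$ is precisely the content of your invocation of classical Stokes on $[0,1]^2$ applied to the pullback form), then extend by approximation; your account of the limiting step via Cameron--Martin quasi-invariance and $L^{q'}(d\tau\otimes d\mu)$ convergence along a subsequence is a correct fleshing out of the paper's phrase ``follows from a limiting argument.''
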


\begin{proof} 
(1)~This is trivial for $f\in \FC(W^d)$.
General cases follow from a limiting argument.

\noindent
(2)~First we assume that $\beta\in \FC(W^d,H^{\ast})$.
By the definition of the exterior differential,
we have
$$
d\beta(w)(X,Y)=\left((D\beta)(w)[X],Y\right)-
\left((D\beta)(w)[Y],X\right),
$$
where $X,Y\in H$.
Here $(D\beta)(w)[X]$ denotes the derivative in the direction to
$X$.
Let
$\phi(\sigma)=\int_0^1\left(\beta(w+\calh(\sigma,\tau)),
\partial_{\tau}\calh(\sigma,\tau)\right)d\tau$.
We have
\begin{eqnarray}
\dot{\phi}(\sigma)
&=&
\int_0^1\left((D\beta)(w+\calh(\sigma,\tau))
[\partial_{\sigma}\calh(\sigma,\tau)],
\partial_{\tau}\calh(\sigma,\tau)\right)d\tau+
\int_0^1\left(\beta(w+\calh(\sigma,\tau)),
\partial_{\sigma}\partial_{\tau}\calh(\sigma,\tau)\right)d\tau\nonumber\\
&=&\int_0^1(d\beta)(w+\calh(\sigma,\tau))
(\partial_{\sigma}\calh(\sigma,\tau),\partial_{\tau}\calh(\sigma,\tau))d\tau
\nonumber\\
& &+\int_0^1\left((D\beta)
(w+\calh(\sigma,\tau))[\partial_{\tau}\calh(\sigma,\tau)],
\partial_{\sigma}\calh(\sigma,\tau)\right)
d\tau+\int_0^1\left(\beta(w+\calh(\sigma,\tau)),
\partial_{\sigma}\partial_{\tau}\calh(\sigma,\tau)\right)d\tau\nonumber
\end{eqnarray}
and
\begin{multline*}
\int_0^1
\left((D\beta)(w+\calh(\sigma,\tau))[\partial_{\tau}\calh(\sigma,\tau)],
\partial_{\sigma}\calh(\sigma,\tau)\right)
d\tau+\int_0^1\left(\beta(w+\calh(\sigma,\tau)),
\partial_{\sigma}\partial_{\tau}\calh(\sigma,\tau)\right)d\tau
\\
=\left(\beta(w+\calh(\sigma,1)),\partial_{\sigma}\calh(\sigma,1)\right)
-\left(\beta(w+\calh(\sigma,0)),\partial_{\sigma}\calh(\sigma,0)\right)=0.
\end{multline*}
Therefore we get
\begin{eqnarray}
\phi(1)-\phi(0)&=&
\iint_{(\sigma,\tau)\in [0,1]^2}
(d\beta)(w+\calh(\sigma,\tau))
\left(\partial_{\sigma}\calh(\sigma,\tau),\partial_{\tau}\calh(\sigma,\tau)
\right)d\sigma d\tau.
\end{eqnarray}
By the limiting argument, we complete the proof.
\end{proof}

\section{A retraction map in a Wiener space}

Let $X(t,a,w)$ be the solution of the SDE
which is defined in Proposition~\ref{solution of sde}.
In this section, we construct a retraction map
from a tubular neighborhood of the submanifold $S$
to $S$.
Recall that $S$ is defined by
$$
S=\left\{w\in \Omega~|~X(1,e,w)=e\right\}.
$$

By Proposition~\ref{solution of sde},
it is easy to see that $w\mapsto X(t,e,w)$ is $H$-differentiable map and
$$
(R_{X(t,e,w)})_{\ast}^{-1}DX(t,e,w)[h]=
\int_0^tAd\left(X(s,e,w)\right)\dot{h}(s)ds.
$$

Note that the differential form
$\alpha\in {\mathbb D}^{k,q}(\wedge^pT^{\ast}L_e(G))$ is a
measurable map from $L_e(G)$ to $\wedge^pH_0^{\ast}$.
For $\alpha\in {\mathbb D}^{k,q}(\wedge^pT^{\ast}L_e(G))$,
define the pull-back of $\alpha$ by $X$ as follows:
$$
(X^{\ast}\alpha)(w)
=\alpha(X(w))(U(w),\cdots,U(w)),
$$
where
$U(w)h=\int_0^tAd\left(X(s,e,w)\right)\dot{h}(s)ds$.
Since $X_{\ast}\mu_e=\nu_e$, $X^{\ast}\alpha\in 
L^p(\wedge^pT^{\ast}S)$.
In fact, the map $X^{\ast}$ gives isomorphisms
between Sobolev spaces as follows.

\begin{pro}\label{isomorphism}
$(1)$~Let $k$ be a non-negative integer and $q>1$.
The mapping $X^{\ast}$ is a bijective linear isometry
from ${\mathbb D}^{k,q}(\wedge^pT^{\ast}L_e(G))$ to
${\mathbb D}^{k,q}
(\wedge^pT^{\ast}S)$.

\noindent
$(2)$~For any $\alpha\in {\mathbb D}^{k,q}(\wedge^pT^{\ast}L_e(G))$,
we have $d_SX^{\ast}\alpha=X^{\ast}d\alpha$.
\end{pro}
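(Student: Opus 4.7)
The plan rests on the fibrewise structure of $U(w)$. First I would verify that $U(w):H\to H$ maps the tangent space $T_w S$ (consisting of $h\in H$ with $DX(1,e,w)[h]=0$) isometrically onto $H_0 = T_{X(w)}L_e(G)$. The formula $(R_{X(1,e,w)})_\ast^{-1}DX(1,e,w)[h] = (U(w)h)(1)$ combined with $X(1,e,w)=e$ for $w\in S$ identifies $T_w S = \{h\in H : (U(w)h)(1)=0\}$, so $U(w)(T_w S)\subset H_0$; since $\mathrm{Ad}(X(s,e,w))$ acts as an orthogonal transformation of $\mathfrak{g}$ thanks to the bi-invariance of the metric, the norm $\int_0^1|\dot h(s)|^2ds$ is preserved. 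Combined with $X_\ast \mu_e = \nu_e$, this immediately yields the $L^q$-isometry $\|X^\ast\alpha\|_{L^q(\wedge^p T^\ast S)} = \|\alpha\|_{L^q(\wedge^p T^\ast L_e(G))}$ on bounded measurable forms.

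For part (2), I would first establish $d_S X^\ast\alpha = X^\ast d\alpha$ for $\alpha\in \FC(\wedge^p T^\ast L_e(G))$, where the identity reduces to the standard naturality of $d$ under a smooth pull-back once one observes that $w\mapsto X(w)$ and $w\mapsto U(w)$ are $H$-smooth by Proposition~\ref{solution of sde}. The cylindrical case then extends to general $\alpha\in {\mathbb D}^{k,q}$ by density, using closability of both $d$ and $d_S$ together with the $L^q$-continuity of $X^\ast$ established in the previous step.

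The $k$-th order isometry in (1) then follows by iterating the commutation relation $\nabla_S X^\ast = X^\ast \nabla$, which expresses naturality of the metric-compatible connections: the right-invariant Riemannian structure on $L_e(G)$ pulls back via the fibrewise isometry $U(w)$ to the Riemannian structure on $S$ used to define $\nabla_S$, and uniqueness of the associated covariant derivative (in the sense of \cite{kazumi-shigekawa, aida-ou}) forces the two to correspond. Finally, surjectivity of $X^\ast$ is obtained by constructing an inverse pull-back: the map $\gamma\mapsto w$ determined $\nu_e$-a.s.\ by the Stratonovich equation $dw_t = L_{\gamma_t}^{-1}\circ d\gamma_t$ serves as a measurable right inverse to $X$ in the Sobolev sense, and its pull-back inverts $X^\ast$ on Sobolev classes by the same isometry argument. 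I expect the main obstacle to be the precise identification of $\nabla_S$ with the connection induced by the pull-back of the $L_e(G)$ Riemannian structure via $U$, since it requires tracking higher $H$-derivatives of $w\mapsto U(w)$ and verifying that the resulting correction terms remain tangential to $S$.
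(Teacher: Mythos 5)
Your outline is essentially sound, and it reconstructs in detail what the paper disposes of by citation: the paper's entire proof of (1) is a reference to Lemma~3.3 and Proposition~3.6 of \cite{aida-sobolev} (denseness of $X^{\ast}\FC(L_e(G))$ in ${\mathbb D}^{\infty}(S)$ for surjectivity, and the tensor-case bijectivity argument there), and part (2) is declared ``a direct calculation.'' Your identification of the fibrewise isometry --- $U(w)$ carrying $T_wS=\ker DX(1,e,w)=\{h : (U(w)h)(1)=0\}$ onto $H_0$ norm-preservingly because $\mathrm{Ad}(X(s,e,w))$ is orthogonal, combined with $X_{\ast}\mu_e=\nu_e$ --- is exactly the mechanism behind the cited $L^q$-isometry, and the commutation $\nabla_S X^{\ast}=X^{\ast}\nabla$ is the substance of Proposition~3.6 in \cite{aida-sobolev}. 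Where you genuinely diverge is surjectivity: you propose constructing an explicit measurable inverse $\gamma\mapsto w$ via $dw_t=L_{\gamma_t}^{-1}\circ d\gamma_t$ and pulling back through it, whereas the paper's route is softer --- since $X^{\ast}$ is an isometry its range is closed, and it contains the dense subspace $X^{\ast}\FC(L_e(G))$ of ${\mathbb D}^{\infty}(S)$, hence is all of ${\mathbb D}^{k,q}$. Your inverse-map route can be made to work, but it is not cheaper: you would have to show that the inverse It\^o map is itself $H$-differentiable to all orders with Sobolev-bounded derivatives so that its pull-back lands in the right Sobolev class, which is essentially equivalent in difficulty to the density statement you are trying to avoid. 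The closed-range-plus-density argument sidesteps all of that, so I would recommend substituting it for your last step; the rest of your plan, including the reduction of (2) to cylindrical forms followed by a closability/density limit, matches the intended argument.
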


\begin{proof}
~(1)~The surjectivity follows from 
the denseness of $X^{\ast}\FC(L_e(G))$ in
${\mathbb D}^{\infty}(S)$.
See Lemma~3.3 in \cite{aida-sobolev}.
In the case of tensors, the proof of the bijectivity can be found in 
Proposition~3.6
in \cite{aida-sobolev}.
The same proof works in the case of differential forms.

\noindent
(2)~This follows from a direct calculation.
\end{proof}

Let $\ep$ be a sufficiently small positive number.
For $a\in B_{\ep}(e)$, let 
$$
\psi_{\ep}(a,w)=-\int_0^{\cdot}
Ad\left(X(s,e,w)^{-1}\right)(\log a)ds\in H.
$$
Here $\log$ is the inverse mapping of $\exp : {\mathfrak g}\to G$.
Using this, we define 
\begin{equation}
\Psi_{\ep}(w)=w+\psi_{\ep}(X(1,e,w),w)
\qquad w\in {\cal D}_{\ep}.
\end{equation}
By Proposition~\ref{solution of sde},
$\Psi_{\ep}(w)\in S$ for all $w\in {\cal D}_{\ep}$.
Note that $\sup_{w\in \Omega}\|D\Psi_{\ep}(w)\|_{L(H,H)}<\infty$.
We define the pull-back of $\theta \in \FC(\wedge^pT^{\ast}S)$
by $\Psi_{\ep}$ as follows:
$$
(\Psi_{\ep}^{\ast}\theta)(w)=
\theta(\Psi_{\ep}(w))(D\Psi_{\ep}(w),\ldots,D\Psi_{\ep}(w)).
$$

The statement (5) in the following proposition
which follows from the result in rough path analysis
is important in the proof of our main results.

\begin{pro}\label{retraction map}
$(1)$~Let $q>1$.
For any $\eta\in {\mathbb D}^{\infty}(W^d)$,
it holds that
\begin{eqnarray}
\lefteqn{\int_{{\cal
 D}_{\ep}}|\Psi_{\ep}^{\ast}\theta(w)|^q\eta(w)d\mu(w)}\nonumber\\
& &=
\int_{B_{\ep}(e)}da\int_{S}d\mu_e(w)
|\theta(w)\left((D\Psi_{\ep})(w+\psi_{\ep}(a^{-1},w))\cdot,\cdots,
(D\Psi_{\ep})(w+\psi_{\ep}(a^{-1},w))\cdot\right)|^q\nonumber\\
& &\qquad \times \eta\left(w+\psi_{\ep}(a^{-1},w)\right)
\exp\left(-(\log a, b(1,w))-\frac{1}{2}|\log a|^2
\right),
\end{eqnarray}
where
$b(1,w)=\int_0^1Ad(X(t,e,w))\circ dw(t)$.
In particular $\|\Psi_{\ep}^{\ast}\theta\|_{L^q({\cal D}_{\ep},\mu)}
\le C_{q,r}\|\theta\|_{L^{r}(S,\mu_e)}$
for any $1<q<r$.

\noindent
$(2)$~Let $\chi$ be a smooth function on ${\mathbb R}$
such that $\chi=1$ in a neighborhood of $0$ and 
${\rm supp}~\chi\subset (-\infty, \ep^2)$.
Set $\hat{\chi}(w)=\chi\left(d(X(1,e,w),e)^2\right)$.
Define 
$T_{\chi,\ep}\theta=\hat{\chi}\Psi_{\ep}^{\ast}\theta$ for
$\theta\in \FC(\wedge^pT^{\ast}S)$.
Then $T_{\chi,\ep}$ can be extended uniquely
to a bounded linear operator
from ${\mathbb D}^{k,r}(\wedge^pT^{\ast}S)$ to ${\mathbb D}^{k,q}
(\wedge^pH^{\ast})$
for any $1<q<r$ and $k\in {\mathbb N}\cup \{0\}$.
Moreover it holds that
\begin{equation}
dT_{\chi,\ep}\theta=d\hat{\chi}\wedge \Psi_{\ep}^{\ast}\theta
+\hat{\chi}\Psi_{\ep}^{\ast}d_S\theta.\label{pull-back}
\end{equation}

\noindent
$(3)$~
The pull-back $\iota^{\ast}\beta\in {\mathbb D}^{k,q}(\wedge^pT^{\ast}S)$ 
is well-defined for
$p$-form $\beta$ on 
$W^d$ with $\|\beta\|_{k,r}<\infty$
for sufficiently large $k$ and any $1<q<r$.
Moreover it holds that
\begin{equation}
d_S\iota^{\ast}\beta=\iota^{\ast} d\beta.
\end{equation}

\noindent
$(4)$~
For sufficiently large $k$ and 
$q>1$, it holds that for any $\theta\in {\mathbb D}^{k,q}(\wedge^pT^{\ast}S)$
\begin{equation}
\iota^{\ast}T_{\chi,\ep}\theta=\theta.
\end{equation}

\noindent
$(5)$~
Let $\varphi\in H$ and
$U_{r}(\varphi)\subset {\cal D}_{\ep}$.
Then there exists a constant $C$ which depends only on
$r,\varphi,\ep$ such that
\begin{equation}
\|\Psi_{\ep}^{\ast}\theta\|_{L^2(U_r(\varphi))}\le
C\|\theta\|_{L^2(\mu_e)}.
\end{equation}
\end{pro}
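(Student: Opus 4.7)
The plan is to establish (1)--(4) by essentially formal infinite-dimensional calculus and to extract for (5) the pointwise bounds provided by rough path analysis. For (1), I would identify the inverse of the map $w\mapsto(\Psi_\ep(w),X(1,e,w))$ from ${\cal D}_\ep$ to $S\times B_\ep(e)$ explicitly as $\Phi(w,a):=w+\psi_\ep(a^{-1},w)$. Applying Proposition~\ref{solution of sde}\,(4)--(5) with $\phi_t=\exp(t\log a)$ shows $X(1,e,\Phi(w,a))=a$, and the abelianness of $\exp(\mathbb{R}\log a)$ forces $Ad(X(s,e,\Phi(w,a))^{-1})\log a=Ad(X(s,e,w)^{-1})\log a$, whence $\Psi_\ep(\Phi(w,a))=w$. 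A Girsanov-type computation for the anticipative shift $\Phi(\cdot,a)$, combined with the Sugita disintegration
\begin{equation*}
\int_{{\cal D}_\ep}F\,d\mu=\int_{B_\ep(e)}p(1,e,a)\left(\int_S F(\Phi(w,a))\,d\mu_e(w)\right)da,
\end{equation*}
yields the exponential density appearing in (1); note that $\|\psi_\ep(a^{-1},w)\|_H^2=|\log a|^2$ because $Ad$ is isometric on ${\mathfrak g}$. The $L^q$ bound then follows from H\"older in $a$ over the bounded set $B_\ep(e)$ together with the standard $L^p$ moment estimates for the derivatives of the SDE flow.

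Items (2), (3), (4) are then formal consequences. For (2), the cutoff $\hat\chi$ confines everything to ${\cal D}_\ep$, and (\ref{pull-back}) is the Leibniz rule combined with the chain-rule identity $d(\Psi_\ep^\ast\theta)=\Psi_\ep^\ast d_S\theta$, which I would check on cylindrical $\theta$ and extend by density using (1). For (3), $\iota^\ast$ is the restriction of $H^\ast$-valued tensors from $W^d$ to $S$; its continuity between Sobolev spaces for $k$ sufficiently large relative to $\dim G$ follows from the capacity properties of $\delta_e(X(1,e,\cdot))$ recalled after (\ref{submanifold}), and $d_S\iota^\ast=\iota^\ast d$ is immediate from $T_wS\subset H$. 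For (4), $\Psi_\ep|_S=\mathrm{id}_S$ (because $X(1,e,w)=e$ forces $\psi_\ep(e,w)=0$) and $\hat\chi|_S\equiv 1$, so that $\iota^\ast T_{\chi,\ep}\theta=(\Psi_\ep\circ\iota)^\ast\theta=\theta$ by functoriality of pull-back.

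The substantive part is (5). Specializing (1) to $q=2$ and $\eta=\mathbf{1}_{U_r(\varphi)}$ gives
\begin{equation*}
\int_{U_r(\varphi)}|\Psi_\ep^\ast\theta|^2\,d\mu=\int_{B_\ep(e)}\!\int_S\bigl|\theta(w)(D\Psi_\ep(\Phi(w,a))\,\cdot,\ldots)\bigr|^2\mathbf{1}_{U_r(\varphi)}(\Phi(w,a))\,\rho(a,w)\,d\mu_e(w)\,da
\end{equation*}
with $\rho(a,w)=\exp(-(\log a,b(1,w))-\tfrac12|\log a|^2)$. Since $\|\psi_\ep(a^{-1},w)\|_H=|\log a|\le\ep$, the indicator forces $\Phi(w,a)$, and hence $w$ itself up to an $H$-shift of norm at most $\ep$, into a fixed $H$-enlargement of $U_r(\varphi)$; on this enlargement Lemma~\ref{U and V} yields a uniform bound on $d_\Omega(0,\cdot)$ depending only on $r$ and $\varphi$. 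The operator $D\Psi_\ep(z)=\mathrm{id}+D_z\bigl(\psi_\ep(X(1,e,z),z)\bigr)$ and the functional $b(1,\cdot)=\int_0^1 Ad(X(t,e,\cdot))\circ dw_t$ are built from the SDE flow and its derivative process, so by the universal limit theorem (Proposition~\ref{solution of sde}\,(2), applied both to the flow and to its Jacobian) they are continuous in $d_\Omega$ and therefore pointwise bounded on the enlargement by constants depending only on $r$, $\varphi$, $\ep$. Combining these bounds with $|B_\ep(e)|<\infty$ produces the claimed $L^2$ estimate.

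The hard point is precisely this pointwise (rather than only $L^p$) control of $\|D\Psi_\ep\|_{L(H,H)}$ and of $b(1,\cdot)$ on the set $U_r(\varphi)$: Malliavin calculus supplies only $L^p$ estimates, which are inadequate because $U_r(\varphi)$ may have small and $\varphi$-dependent $\mu$-measure, whereas the final constant must be independent of $\theta$. It is the use of a rough-path ball instead of a Wiener-norm ball, together with the rough-path continuity of the SDE flow and its derivative, that makes such pointwise bounds accessible and lets the $L^2$ estimate decouple cleanly from the pinning constraint.
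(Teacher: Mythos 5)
Your proposal is correct and follows essentially the same route as the paper, which establishes (1) via the identity $X(t,e,w+\psi_{\ep}(a,w))=e^{-t\log a}X(t,e,w)$ and the quasi-invariance of $\nu_e$ in the sense of Gross, treats (2)--(4) as formal consequences (with (4) resting on $D\Psi_{\ep}(w)=P(w)$ on $S$, which is the content of your functoriality remark), and deduces (5) from (1) together with the rough-path continuity of Proposition~\ref{solution of sde}\,(2) exactly as you describe, the pointwise control on the rough-path ball being the reason the $L^2$ exponent is not degraded. The only cosmetic caveat is that to take $\eta=\mathbf{1}_{U_r(\varphi)}$ in (1), which is stated for $\eta\in{\mathbb D}^{\infty}(W^d)$, you should note that the disintegration identity extends to bounded measurable $\eta$ by a routine approximation.
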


\begin{proof} Noting that $X(t,e,w+\psi_{\ep}(a,w))=e^{-t\log a}X(t,e,w)$,
(1) follows from the quasi-invariance of $\nu_e$.
See \cite{gross}. 
The extension property of (2) follows from (1).
One can check the identity (\ref{pull-back}) by a direct calculation
when $\theta$ is a smooth cylindrical form.
General cases follow from an approximation argument.
Part (3) is easy to check when $\beta$ is a smooth cylindrical form.
General cases follows from a limiting argument.
Part (4) follows from $D\Psi_{\ep}(w)=P(w)$ on $S$, where
$P(w)$ is a projection operator from $H$ onto
the tangent space of $S$ at $w$.
Part (5) follows from (1) and Proposition~\ref{solution of sde} (2).
\end{proof}

\section{Proof of the main theorem}

The following immediate consequence of the ergodicity of the
Wiener measure under translations by $H$
is used to construct $f$ in Theorem~\ref{main theorem 1}
by the local data on $U_{r}(\varphi)$.

\begin{lem}\label{ergodicity}
Let $A, B$ be measurable subsets of $W^d$ with
$\mu(A)>0$ and $\mu(B)>0$.
Then there exists $h\in H$ and a measurable subset
$A_0\subset A$ such that $\mu(A_0)>0$
and $A_0+h\subset B$.
\end{lem}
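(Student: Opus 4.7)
The plan is to reformulate the desired conclusion as the existence of some $h \in H$ with $\mu(A \cap (B - h)) > 0$: given such $h$, the set $A_0 := A \cap (B - h)$ is automatically a measurable subset of $A$ with positive $\mu$-measure, and $A_0 + h \subset B$ holds by construction. Thus the entire task reduces to producing a single admissible translation $h \in H$.

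To produce it I will invoke the ergodicity of the Cameron--Martin translation action of $H$ on $(W^d, \mu)$. Choose a countable dense subgroup $H_0 \subset H$ (for example, rational linear combinations of a complete orthonormal basis of $H$), and form
\[
\widetilde{B} := \bigcup_{h \in H_0}(B - h),
\]
which is measurable as a countable union and is literally $H_0$-invariant, i.e.\ $\widetilde{B} + h_0 = \widetilde{B}$ for every $h_0 \in H_0$, because $H_0$ is a group. Cameron--Martin quasi-invariance together with ergodicity of the $H$-translation action (equivalently, the triviality of the $H$-invariant sub-$\sigma$-algebra, which follows from the Wiener chaos decomposition: any $F \in L^2(\mu)$ satisfying $D_h F = 0$ for all $h$ in a dense subset of $H$ must lie in the zeroth chaos and hence be constant) forces $\mu(\widetilde{B}) \in \{0, 1\}$; since $\mu(\widetilde{B}) \ge \mu(B) > 0$, we conclude $\mu(\widetilde{B}) = 1$.

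Consequently $A \cap \widetilde{B} = \bigcup_{h \in H_0}\bigl(A \cap (B - h)\bigr)$ has $\mu$-measure $\mu(A) > 0$, and by countable subadditivity at least one $h \in H_0 \subset H$ must satisfy $\mu\bigl(A \cap (B - h)\bigr) > 0$. Taking that $h$ together with $A_0 := A \cap (B - h)$ finishes the proof. The only genuinely non-trivial ingredient is the ergodicity of the Cameron--Martin translation action, but this is precisely the classical fact the lemma's preamble appeals to; everything else is a bookkeeping argument on countable covers.
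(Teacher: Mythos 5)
Your argument is correct and is precisely the standard derivation of this lemma from the ergodicity of the $H$-translation action; the paper itself offers no proof, stating the lemma as an immediate consequence of that ergodicity, so your write-up simply fills in the omitted (and intended) argument. One small caveat: the parenthetical justification of ergodicity via ``$D_hF=0$ for $h$ dense implies $F$ is in the zeroth chaos'' is only heuristic here, since the indicator $1_{\widetilde{B}}$ need not be $H$-differentiable; the clean route is to observe that exact invariance of $\widetilde{B}$ under the dense subgroup $H_0$ yields $\mu\bigl(\widetilde{B}\,\triangle\,(\widetilde{B}+h)\bigr)=0$ for every $h\in H$ by the $L^1(\mu)$-continuity of $h\mapsto 1_{\widetilde{B}}(\cdot-h)$ (a consequence of the Cameron--Martin formula), and then invoke the classical zero--one law for $H$-quasi-invariant sets.
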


Let $\chi$ be a smooth nonnegative function
such that $\chi(u)=1$ for $u\le 4\ep^2/9$
and $\chi(u)=0$ for $u\ge 9\ep^2/16$.
Let $\hat{\chi}(w)=\chi\left(d(X(1,e,w),e)^2\right)$.

\begin{lem}\label{final lemma}
Let $\theta$ be the $1$-form on $S$ 
in Theorem~$\ref{main theorem 1}$.
Let $\ep$ be a sufficiently small positive number.
Let $\beta=\Psi_{\ep}^{\ast}\theta$.
Let $1<q<p$.
Then there exists a measurable function $F$ on 
${\cal D}_{\ep}$ and
$\rho_n$~$(n\in {\mathbb N})$ on $\Omega$ such that
the following hold.

\noindent
$(1)$~
The function $\rho_n$ is a bounded
non-negative $\infty$-quasi-continuous function
and $\rho_n\in {\mathbb D}^{\infty}(W^d)$ holds.

\noindent
$(2)$~For any $r>1$ and $k\in {\mathbb N}$,
$\lim_{n\to\infty}C^k_r(\{w\in \Omega~|~\rho_n(w)=1\}^c)=0$ and
$\lim_{n\to\infty}\|\rho_n-1\|_{r,k}=0$.

\noindent
$(3)$~
There exists $F_n\in {\mathbb D}^{1,2}(W^d)\cap 
{\mathbb D}^{\infty,q}(W^d)$ such that
$F(w)=F_n(w)$ and
$dF_n(w)=\beta(w)$ for $\mu$-almost all $w$ of
$\{w\in \Omega~|~\rho_n(w)\ne 0\}\cap {\cal D}_{\ep/2}$.

\noindent
$(4)$~
Let $\hat{F}_n=\tilde{F}_n\rho_n\hat{\chi}$,
where $\tilde{F}_n$ is a $(q,\infty)$-quasi-continuous version of
$F_n$.
It holds that
$\hat{F}_n\in {\mathbb D}^{1,2}(W^d)\cap 
{\mathbb D}^{\infty,q}(W^d)$ and
\begin{equation}
d\hat{F}_n=
\beta\rho_n\hat{\chi}+
\tilde{F}_nd\rho_n\hat{\chi}+
\tilde{F}_n\rho_nd\hat{\chi}.
\end{equation}
\end{lem}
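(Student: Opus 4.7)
The plan is to build $F$ by gluing local primitives coming from the Poincar\'e lemma on ball-like sets, and to construct $\rho_n$ as explicit cutoffs in the rough-path norms. First I would apply Lemma~\ref{covering} to obtain a countable cover $\{U_{4\kappa_i/3}(\varphi_i)\}_{i\ge 1}$ of ${\cal D}_{\ep}$ with each $U_{\sqrt{2}\kappa_i}(\varphi_i)\subset {\cal D}_{\ep/2}$ (tuning the parameter $n$ so that $\ep_{2n}$ stays below $\ep/2$), reordered as in Proposition~\ref{covering2}(2) so that consecutive finite unions overlap on sets of positive measure. Because $\beta=\Psi_{\ep}^{\ast}\theta$ with $d_S\theta=0$ and $\Psi_{\ep}$ is $H$-smooth, one verifies from Proposition~\ref{retraction map} that $d\beta=0$ on ${\cal D}_{\ep}$ and $\beta\in {\mathbb D}^{\infty,q}(W^d,H^{\ast})\cap L^2(W^d,H^{\ast})$ for $1<q<p$. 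Theorem~\ref{a vanishing theorem} then yields, for each $i$, a primitive $g_i\in {\mathbb D}^{\infty,q}(W^d)\cap {\mathbb D}^{1,2}(W^d)$ with $dg_i=\beta$ on a slightly smaller ball $U_{r_i'}(\varphi_i)$.

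Next I would glue these primitives inductively. On an overlap $U_{r_i'}(\varphi_i)\cap U_{r_j'}(\varphi_j)$ the $H$-derivative of $g_i-g_j$ vanishes, so by Lemma~\ref{stokes}(1) applied along $H$-paths (supplied by Lemma~\ref{H-connectivity}) the difference is $\mu$-almost everywhere constant; since consecutive overlaps have positive measure by Lemma~\ref{Urvarphi and H}, I can recursively replace $g_{i+1}$ by $g_{i+1}+c_{i+1}$ so that the adjusted primitives agree on the overlaps, and define $F$ piecewise. Coherence of these adjustments around loops through several patches is guaranteed by Lemma~\ref{stokes}(2) combined with the $H$-simply connectedness of ${\cal D}_{\ep}$ from Proposition~\ref{covering2}(1): any two $H$-homotopic paths in ${\cal D}_{\ep}$ give the same integral of $\beta$, so the constants $c_i$ are unambiguous.

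For the cutoff I would take $\rho_n$ to be a smooth function of the rough-path quantities
\[
\|w\|_{m,\theta'/2}^m,\qquad \|C(w^i,w^j)\|_{m,\theta}^m\quad(1\le i,j\le d),
\]
cut off at thresholds growing in $n$; by Theorem~\ref{quasi-cont-version} each $\rho_n$ is $\infty$-quasi-continuous, by Lemma~\ref{H-derivative} it lies in ${\mathbb D}^{\infty}(W^d)$, and the convergences $\|\rho_n-1\|_{r,k}\to 0$ and $C^k_r(\{\rho_n=1\}^c)\to 0$ follow from the Wiener-chaos tail bounds of Section~3 and Lemma~\ref{tchebycheff}. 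On $\{\rho_n\ne 0\}\cap {\cal D}_{\ep/2}$ the rough-path norms are bounded, so by Proposition~\ref{solution of sde}(2) this set meets only finitely many $U_{r_i'}(\varphi_i)$; patching the corresponding adjusted primitives and then applying the multiplication-and-weak-limit procedure from the end of the proof of Theorem~\ref{a vanishing theorem 0} yields $F_n\in {\mathbb D}^{1,2}(W^d)\cap {\mathbb D}^{\infty,q}(W^d)$ satisfying $F_n=F$ and $dF_n=\beta$ on $\{\rho_n\ne 0\}\cap {\cal D}_{\ep/2}$. Part (4) is then the ordinary Leibniz rule for $\tilde F_n\rho_n\hat{\chi}$; although $\tilde F_n$ is only \emph{locally} Sobolev, the factor $\rho_n$ vanishes outside a region where $F_n=\tilde F_n$, absorbing the issue.

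The main obstacle will be the inductive gluing. Making the constants $c_i$ simultaneously consistent across the countable cover requires combining the $H$-simply connectedness (to kill cocycle obstructions around loops), Lemma~\ref{stokes}(2) (to convert path-homotopies into equalities of integrals), and Lemma~\ref{ergodicity} (to promote ``constant along $H$-paths'' to ``constant $\mu$-a.e.'' on positive-measure overlaps). Keeping these three ingredients compatible while the $g_i$ are only almost-everywhere defined is the delicate point; once arranged, the remaining steps are Sobolev-analytic bookkeeping in the spirit of Theorem~\ref{a vanishing theorem 0}.
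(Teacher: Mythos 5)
Your overall architecture matches the paper's: cover ${\cal D}_{\ep}$ by the ball-like sets, apply Theorem~\ref{a vanishing theorem} on each, and glue the local primitives by adjusting constants, with the consistency of the constants enforced by the combination of Lemma~\ref{ergodicity}, $H$-connectedness/$H$-simply connectedness of ${\cal D}_{\ep}$, and Lemma~\ref{stokes}. (One small imprecision there: $\beta=\Psi_{\ep}^{\ast}\theta$ is only defined on ${\cal D}_{\ep}$, so to invoke Theorem~\ref{a vanishing theorem} you must first pass to the globally defined cutoff form $\gamma=T_{\chi_0,2\ep}\theta\in{\mathbb D}^{\infty,q}(W^d,H^{\ast})\cap L^2(W^d,H^{\ast})$ of Proposition~\ref{retraction map}~(2), which agrees with $\beta$ and is closed on ${\cal D}_{\ep}$; this is exactly what the paper does.)

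The genuine gap is in your construction of $\rho_n$ and hence in part (3). You take $\rho_n$ to depend only on the global quantities $\|w\|_{m,\theta'/2}^m$ and $\|C(w^i,w^j)\|_{m,\theta}^m$, and then claim that on $\{\rho_n\ne 0\}\cap{\cal D}_{\ep/2}$ ``the rough-path norms are bounded, so by Proposition~\ref{solution of sde}(2) this set meets only finitely many $U_{r_i'}(\varphi_i)$.'' This is false: bounding $d_{\Omega}(0,w)$ confines you to ${\cal D}_{\ep_n}\cap A_K$, and by (\ref{key-covering 1}) that set is only the increasing limit over $N$ of the sets ${\cal D}_{\ep_n,K,N,\kappa}$; each of the latter admits a \emph{finite} subcover by Lemma~\ref{covering}(2), but their union requires a countable one, and $A_K$ is not totally bounded in $(\Omega,d_{\Omega})$, so no finite subcover of ${\cal D}_{\ep/2}\cap A_K$ exists. (Proposition~\ref{solution of sde}(2) is a continuity statement and gives no finiteness.) Without a finite subcover you cannot set $F_n=H_i$ for a single finite $i$, and the globalized primitives $H_i$ are only constructed for finite unions, with no uniform control of their Sobolev norms as $i\to\infty$. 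The fix is the paper's second cutoff factor: $\rho_n$ must also include a smooth cutoff of the dyadic-remainder quantities $\|w(N)^{\perp,i}\|_{m,\theta'/2}^m$, $\|C(w(N)^{\perp,i},w(N)^{\perp,j})\|_{m,\theta}^m$, $\|C(w(N)^{i},w(N)^{\perp,j})\|_{m,\theta}^m$, $\|C(w(N)^{\perp,i},w(N)^{j})\|_{m,\theta}^m$ at level $\kappa(n)$ with $N=a(\kappa(n))$ chosen as in Lemma~\ref{covering}, so that $\{\rho_n\ne 0\}\cap{\cal D}_{\ep_2}\subset{\cal D}_{\ep_2,n,N,\kappa(n)}$, which \emph{is} contained in finitely many $U_{4\kappa_i/3}(\varphi_i)$. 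With that modification, properties (1) and (2) still follow from Lemma~\ref{H-derivative}, Theorem~\ref{quasi-cont-version} and the chaos estimates of Section~3, and (3), (4) go through as you outline.
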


\begin{proof} Let $\chi_0$ be a smooth decreasing function 
on ${\mathbb R}$ such that $\chi_0(u)=1$
for $u\le 9\ep^2/4$ and ${\rm supp}~\chi_0\subset (-\infty,4\ep^2)$.
Let $\gamma=T_{\chi_0,2\ep}\theta$.
Then $\gamma\in {\mathbb D}^{\infty,q}(W^d,H^{\ast})$.
Also note that $\gamma=\beta$  and $d\gamma=0$ on ${\cal D}_{\ep}$.
The latter result follows from Proposition~\ref{retraction map}~(2).
Let $U_{\sqrt{2}\kappa_i}(\varphi_i)$~$(i=1,2,\ldots)$
be the covering of ${\cal D}_{\ep}$ in 
Lemma~\ref{covering}~(3) and Proposition~\ref{covering2}~(2).
Let us choose $r_i$ such that
$4\kappa_i/3<r_i<\sqrt{2}\kappa_i$.
Since $d\gamma=0$ on $U_{\sqrt{2}\kappa_i}(\varphi_i)$
and $\gamma\in 
L^2(U_{\sqrt{2}\kappa_i}(\varphi_i))$,
by Theorem~\ref{a vanishing theorem}, we see that there exist
$g_i\in {\mathbb D}^{\infty,q}(W^d)\cap {\mathbb D}^{1,2}(W^d)$
such that $dg_i=\gamma$ on $U_{r_i}(\varphi_i)$.
However $g_i$ on $U_{r_i}(\varphi_i)$ is not determined uniquely,
in fact, there is an ambiguity of additive constant.
Actually we prove that there are constants $c_i$ 
and a measurable function $F$ on ${\cal D}_{\ep}$
such that $F(w)=g_i(w)+c_i$ almost all $w\in U_{r_i}(\varphi_i)$
for any $i$ and $r_i$.
First set $c_1=0$.
We define $c_i$~$(i\ge 2)$ inductively in the following way.
Suppose that there exist $c_1,\ldots,c_i$ and a measurable function
$G_i$ on $\cup_{j=1}^iU_{r_j}(\varphi_j)$ such that
$G_i(w)=g_j(w)+c_j$ almost all $w\in U_{r_j}(\varphi_j)$
for all $1\le j\le i$.
By Theorem~\ref{a vanishing theorem}, there exist
$G_{i,j}\in {\mathbb D}^{1,2}(W^d)\cap {\mathbb D}^{\infty,q}(W^d)$ 
such that $G_{i,j}(w)=G_i(w)$ on $U_{r_j}(\varphi_j)$.
We prove that 
for any $\{r_j'\}$ with $4\kappa_j/3<r_j'<r_j$~$(1\le j\le i)$
there exists $H_i\in {\mathbb D}^{1,2}(W^d)\cap 
{\mathbb D}^{\infty,q}(W^d)$ such that
$H_i=G_i$ and $dH_i=\beta$ on $\cup_{j=1}^iU_{r_j'}(\varphi_j)$.

Note that there exist $\phi_j\in {\mathbb D}^{\infty}(W^d)$
~$(1\le j\le i+2)$ such that the following identity holds.
For $1\le j\le i$
$$
\phi_j(w)=
\begin{cases}
1\qquad~w\in U_{r_j'+\ep_j}(\varphi_j),\\
0\qquad~w\in U_{r_j'+\ep_j'}(\varphi_j)^c\\
\end{cases}
$$
and
$$
\phi_{i+1}(w)=
\begin{cases}
0\qquad~w\in \cup_{j=1}^iU_{r_j'+\ep_j-\delta_j'}(\varphi_j),\\
1\qquad~w\in 
\left(\cup_{j=1}^iU_{r_j'+\ep_j-\delta_j}(\varphi_j)\right)^c,\\
\end{cases}
$$
$$
\phi_{i+2}(w)=
\begin{cases}
1\qquad~w\in \cup_{j=1}^iU_{r_j'+\ep_j-\delta_j'-\tau_j'}(\varphi_j),\\
0\qquad~w\in 
\left(\cup_{j=1}^iU_{r_j'+\ep_j-\delta_j'-\tau_j}(\varphi_j)\right)^c.\\
\end{cases}
$$
Here we choose positive numbers such that
$0<\delta_j<\delta_j'<\ep_j<\ep_j'$,
$\ep_j-\delta_j'-\tau_j'>0$,
$0<\tau_j<\tau_j'$ and $r_j'+\ep_j'<r_j$.
These functions can be constructed explicitly in a similar way to
$\tilde{\rho}(w)$ in the proof of Theorem~\ref{a vanishing theorem} using 
mollifiers.
Since $\sum_{j=1}^{i+1}\phi_j(w)\ge 1$ for any $w\in \Omega$,
$$
\tilde{\phi}_j(w)=\frac{\phi_j(w)}{\sum_{j=1}^{i+1}\phi_j(w)}
$$
belongs to ${\mathbb D}^{1,2}(W^d)\cap {\mathbb D}^{\infty,q}(W^d)$
and $\sum_{j=1}^{i+1}\tilde{\phi}_j(w)=1$ for all $w\in \Omega$.
This is a partition of unity associated with
the covering of $\Omega$:
$$
U_{r_j'+\ep_j'}(\varphi_j)~(1\le j\le i),\quad
\left(\cup_{j=1}^iU_{r_j'+\ep_j-\delta_j'}(\varphi_j)\right)^c
$$
Since $\phi_{i+2}(w)\phi_{i+1}(w)=0$ for all $w\in \Omega$,
we have
\begin{eqnarray}
G_i(w)\phi_{i+2}(w)&=&
\sum_{j=1}^{i+1}G_i(w)\phi_{i+2}(w)\tilde{\phi}_j(w)\nonumber\\
&=&\sum_{j=1}^iG_{i,j}(w)\phi_{i+2}(w)\tilde{\phi}_j(w).
\end{eqnarray}
Therefore $H_i=G_{i}\phi_{i+2}$ is the desired function.

By using the existence of
$H_i$ and the $H$-simply connectedness of ${\cal D}_{\ep}$,
we next prove the existence of a measurable function
$G_{i+1}$ on $\cup_{j=1}^{i+1}U_{r_j'}(\varphi_j)$
and a constant $c_{i+1}$
such that $G_{i+1}(w)=G_i(w)$ for almost all 
$w\in \cup_{j=1}^{i}U_{r_j'}(\varphi_j)$ and 
$G_{i+1}(w)=g_{i+1}(w)+c_{i+1}$ for almost all 
$w\in U_{r_{i+1}'}(\varphi_{i+1})$.
Since $\mu\left((\cup_{j=1}^iU_{r_j'}(\varphi_j))\cap
U_{r_{i+1}'}(\varphi_{i+1})\right)>0$,
there exists a piecewise linear path $\varphi\in H$, 
$\delta>0$ and $1\le i_0\le i$
such that
$U_{\delta}(\varphi)\subset U_{r_{i+1}'}(\varphi_{i+1})\cap 
U_{r_{i_0}'}(\varphi_{i_0})$.
Because $d(g_{i+1}-g_{i_0})=0$ on $U_{\delta}(\varphi)$,
$g_{i+1}(w)-g_{i_0}(w)$ is equal to a constant almost all
$w$ on $U_{\delta}(\varphi)$.
We choose $c_{i+1}$ such that 
$g_{i+1}(w)+c_{i+1}=g_{i_0}(w)+c_{i_0}(=G_i(w))$
almost all $w\in U_{\delta}(\varphi)$.
It suffices to prove that 
\begin{equation}
g_{i+1}(w)+c_{i+1}=G_i(w)
\qquad \mbox{for almost all 
$
w\in\left(\cup_{j=1}^iU_{r_j'}(\varphi_j)\right)\cap
U_{r_{i+1}'}(\varphi_{i+1})
$}.\label{extension of g}
\end{equation}
Suppose that there exists a set
$B\subset U_{r_{i_1}'}(\varphi_{i_1})\cap 
U_{r_{i+1}'}(\varphi_{i+1})$ of positive measure for some $1\le i_1\le i$
and $c'>0$
such that 
$$
|g_{i+1}(w)+c_{i+1}-G_i(w)|>c' 
\qquad \mbox{for all
$w\in B$}.
$$
By the ergodicity of the Wiener measure,
there exists a subset
$A\subset U_{\delta}(\varphi)$ with positive measure 
and $h\in H$ such that
$A+h\subset B$.
Choose a point $\eta\in A$ such that
$\mu(V_r(\eta)\cap A)>0$ for all $r>0$,
where $V_r(\eta)$ is defined by
(\ref{ball}).
By the $H$-connectivity of
$\cup_{j=1}^iU_{r_j'}(\varphi_j)$ and
$U_{r_{i+1}}(\varphi_{i+1})$,
there exists two $C^{\infty}$-curves 
$h(i,\tau)$~$(0\le \tau\le 1)$ on $H$
such that $h(i,0)=0$, $h(i,1)=h$~$(i=0,1)$
and
$\eta+h(0,\tau)\subset \cup_{j=1}^iU_{r_j'}(\varphi_j)$
$\eta+h(1,\tau)\subset U_{r_{i+1}'}(\varphi_{i+1})$
for all $0\le \tau\le 1$.
By choosing $\delta$ to be a sufficiently small positive number,
we have for all $0\le \tau\le 1$,
\begin{eqnarray}
V_{\delta}(\eta)+h(0,\tau)&\subset& \cup_{j=1}^iU_{r_{i}'}
(\varphi_i)\\
V_{\delta}(\eta)+h(1,\tau)&\subset& U_{r_{i+1}'}(\varphi_{i+1})
\end{eqnarray}
By the $H$-simply connectedness of ${\cal D}_{\ep}$,
there exists a $C^{\infty}$-map 
${\cal H}={\cal H}(\sigma,\tau)$~$(0\le \sigma,\tau\le 1)$ such that
${\cal H}(0,\tau)=h(0,\tau)$, ${\cal H}(1,\tau)=h(1,\tau)$ and
$\eta+{\cal H}(\sigma,\tau)\subset {\cal D}_{\ep}$ for all 
$(\sigma,\tau)\in [0,1]^2$.
Using the continuity of $X(1,e,\cdot)$ in the topology
of $d_{\Omega}$, we see that there exists $0<\delta'<\delta$ 
such that for all $0\le \sigma,\tau \le 1$
$V_{\delta'}(\eta)+{\cal H}(\sigma,\tau)\subset {\cal D}_{\ep}$.
Note that $dg_{i+1}=\beta$ on $U_{r_{i+1}'}(\varphi_{i+1})$
and $dH_i=\beta$ on $\cup_{j=1}^iU_{r_i'}(\varphi_i)$.
By applying Lemma~\ref{stokes} and noting that
$d\beta=0$ on ${\cal D}_{\ep}$,
we obtain
$$
\left(g_{i+1}(w+h)+c_{i+1}\right)-
\left(g_{i+1}(w)+c_{i+1}\right)
=G_{i}(w+h)-G_{i}(w)
\qquad\mbox{for almost all $w\in A\cap V_{\delta'}(\eta)$}.
$$
This is a contradiction.
This implies (\ref{extension of g}).
Inductively,
we obtain a measurable function $F$
on ${\cal D}_{\ep}$ such that for any $i$
$F(w)=g_i(w)+c_i$ for some $c_i$ and there exists
$H_i\in {\mathbb D}^{1,2}(W^d)\cap {\mathbb D}^{\infty,q}(W^d)$
such that $F(w)=H_i(w)$
for almost all $w\in \cup_{j=1}^iU_{r_j}(\varphi_j)$.
Let $\chi_1$ be a non-negative smooth non-increasing function such that
$\chi_1(u)=1$ for $u\le (1/2)^m$ and $\chi_1(u)=0$ for
$u\ge (2/3)^m$.
Let
\begin{eqnarray*}
\chi_{n,2}(w)&=&\chi_1\left(n^{-m}
\left(\sum_{1\le i,j\le d}\|C(w^i,w^j)\|_{m,\theta}^m+\sum_{1\le k\le d}
\|w^k\|_{m,\theta'/2}^m\right)
\right),\\
\chi_{\kappa,N,3}(w)&=&
\chi_1\Biggl(
\kappa^{-m}\Biggl(
\sum_{k=1}^n\|w(N)^{\perp,k}\|_{m,\theta'/2}^m+
\sum_{1\le i<j\le d}
\|C(w(N)^{\perp,i},w(N)^{\perp,j})\|_{m,\theta}^m\nonumber\\
& &+\sum_{1\le i\le j\le d}
\|C(w(N)^{i},w(N)^{\perp,j})\|_{m,\theta}^m+
\sum_{1\le i\le j\le d}
\|C(w(N)^{\perp,i},w(N)^{j})\|_{m,\theta}^m
\Biggr)
\Biggr),
\end{eqnarray*}
and set $\chi_{n,\kappa,N,4}(w)=\chi_{n,2}(w)\chi_{\kappa,N,3}(w)$.
Then we have
$
\left\{\chi_{n,\kappa,N,4}(w)\ne 0\right\}\cap 
{\cal D}_{\ep_2}
\subset
{\cal D}_{\ep_2,n,N,\kappa}.
$
Now choosing $\kappa=\kappa(n)$ to be sufficiently small according to $n$
as in Lemma~\ref{covering}, we have
for sufficiently large $L_0\in {\mathbb N}$,
$$
{\cal D}_{\ep_2,n,N,\kappa(n)}
\subset
\cup_{i=1}^{L_0}U_{4\kappa_i/3}(\varphi_i).
$$
Therefore letting $N=a(\kappa(n))$ to be a sufficiently large 
natural number
according to $\kappa=\kappa(n)$, 
we see that
$\rho_n(w)=\chi_{n,\kappa(n), a(\kappa(n)),4}
(w)$ satisfies the properties (1), (2).
As for (3), it suffices to set $F_n=H_i$ for
sufficiently large $i$.
Part (4) follows from (3).
\end{proof}

We now can prove the main theorems.

\begin{proof}
[Proof of Theorem~$\ref{main theorem 1}$]
Let $\rho_n$ be the function in Lemma~\ref{final lemma}.
Then (1) holds.
Let $f_n=\tilde{F}_n$.
We construct $f$ on $S$.
Let $C_n=\{\rho_n\ne 0\}\cap {\cal D}_{\ep/2}$.
By Lemma~\ref{final lemma}~(2),
$\lim_{n\to\infty}\mu_e(C_n^c)=0$.
For $n,n'\in {\mathbb N}$, we have
\begin{equation}
\tilde{F}_n(w)=\tilde{F}_{n'}(w)=F(w)
\quad\mbox{for $\mu$-almost all $w$ of $C_n\cap C_{n'}$}.
\end{equation}
Hence there exists a Borel measurable subset $B_{n,n'}$ such that
$C^k_q(B_{n,n'})=0$ and
\begin{equation}
\tilde{F}_n(w)=\tilde{F}_{n'}(w) \qquad 
\mbox{for all $w\in C_n\cap C_{n'}\cap B_{n,n'}^c$}.
\end{equation}
This implies that
$\tilde{F}_n(w)=\tilde{F}_{n'}(w)$
for $\mu_e$-almost all $w\in C_n\cap C_{n'}\cap S$.
Therefore there exists a measurable function
$f$ on $S$
\begin{equation}
f(w)=\tilde{F}_n(w)\qquad
\mbox{for $\mu_e$-almost all $w\in C_n\cap S$.}
\end{equation}
For this $f$ and $f_n$, (2)~(i), (ii) holds.
We prove (ii).
Lemma~\ref{final lemma}~(3) shows that $dF_n=\beta=T_{\chi_0,2\ep}$ on
$C_n$.
Hence, using Proposition~\ref{retraction map}~(3) and (4),
we can conclude that $d_S(\iota^{\ast}F_n)=\theta$
on $\{\rho_n\ne 0\}\cap S$
which implies $d_Sf_n=\theta$ on $\{\rho_n\ne 0\}\cap S$.
We prove (2)~(iii).
Note that $f\rho_n\eta=f_n\rho_n\eta\in {\mathbb D}^{\infty,q-}(W^d)$.
Hence by Theorem~4.3 in \cite{sugita}, we have
$f\rho_n\eta\in L^1(S,\mu_e)$.
The equation in (2)~(iv) is equivalent to
$$
\int_Sf_n\rho_nd_S^{\ast}\left(\rho_n\eta\right)d\mu_e=
\int_S\left(d_S\left(f_n\rho_n\right), \rho_n\eta\right)d\mu_e
$$
which follows from the integration by parts formula on $S$.
We prove (2)~(v).
By the integration by parts formula on $S$,
we have
\begin{equation}
\int_S\psi_K'(\hat{F}_n(w))\left(d_S\hat{F}_n(w),\eta(w)\right)
d\mu_e(w)=
\int_S\psi_K(\hat{F}_n(w))d_S^{\ast}\eta(w)
d\mu_e(w).\label{ibp on S}
\end{equation}
By Lemma~\ref{final lemma}~(4),
we get
\begin{equation}
d_S\hat{F}_n=\theta\rho_n+\tilde{F}_nd\rho_n.\label{dSF}
\end{equation}
Substituting (\ref{dSF}) into (\ref{ibp on S})
and replacing $\eta$ by $\rho_n\eta$, we have
\begin{multline}
\int_S\psi'_K\left(f(w)\rho_n(w)\right)
\Bigl(\theta(w)\rho_n(w)+f(w)d\rho_n(w), \rho_n(w)\eta(w)\Bigr)
d\mu_e(w)\\
=
\int_S\psi_K\left(f(w)\rho_n(w)\right)
d_S^{\ast}\left(\rho_n\eta\right)(w)
d\mu_e(w).
\end{multline}
Here we have used that $f(w)=\tilde{F}_n(w)$ 
$\mu_e$-almost all $w$ on $\{\rho_n\ne 0\}$.
Letting 
$n\to\infty$, we obtain
\begin{eqnarray}
\int_S\psi'_K(f(w))(\theta(w),\eta(w))d\mu_e(w)
&=&\int_S\psi_K(f(w))d_S^{\ast}\eta(w)d\mu_e(w).
\end{eqnarray}
This implies that the weak derivative of $\psi_K(f)$ is
$\psi_K'(f)\theta$.
Since $\left(d_S^{\ast}d_S, \FC(W^d)\right)$ is essentially self-adjoint
(see \cite{aida-ou}, \cite{aida-sobolev}),
$\psi_K(f)\in {\mathbb D}^{1,2}(S)$ and
$d_S\psi_K(f)=\psi_K'(f)\theta$.
\end{proof}

We prove Theorem~\ref{main theorem on loop}.

\begin{proof}
[Proof of Theorem~$\ref{main theorem on loop}$]
Let $\bar{\alpha}=X^{\ast}\alpha$.
Then $\bar{\alpha}\in L^2(\wedge^1T^{\ast}S)\cap {\mathbb D}^{\infty,p}
(\wedge^1T^{\ast}S)$ and $d_S\bar{\alpha}=0$ on $S$.
By Theorem~\ref{main theorem 1}, there exists a measurable function
$g$ on $S$ such that $d_Sg=\bar{\alpha}$.
By using Proposition~\ref{isomorphism}~(1),
we see that there exists a measurable function 
$f$ on $L_e(G)$ such that $X^{\ast}f=g$ for $\mu_e$-almost all
$w$.Hence $X^{\ast}f^K=g^K$.
By Proposition~\ref{isomorphism} and Theorem~\ref{main theorem 1},
we have $f^K\in {\mathbb D}^{1,2}(L_e(G))$
and $df^K=\psi'_K(f)\alpha$ which proves (1).
Since $df^K=\psi'_K(f)\alpha$, using a similar argument to
the proof of Lemma~14 in \cite{aida-irred}, we have
$$
f^K(e^{\ep h}\gamma)-f^K(\gamma)=
\int_0^{\ep}\left(\psi'_K(f(\gamma))\alpha(e^{sh}\gamma),h\right)ds.
$$
Letting $K\to\infty$, we complete the proof of (2).
Part (3) follows from (2).
\end{proof}

We need the Weitzenb\"ock formula for $\square$
to prove Theorem~\ref{main theorem 2}.
It will be proved below.

\begin{lem}\label{final lemma2}
Let ${\Casimir}=\sum_{i=1}^d({\rm ad \ep_i})^2$,
where $\{\ep_i\}$ denotes an orthonormal system of ${\mathfrak g}$.
Then
\begin{eqnarray}
\left(\square\alpha,h\right)&=&
\left(\nabla_{\nu_{e}}^{\ast}\nabla\alpha+
\alpha+T_{b(1)}\alpha,h\right)\nonumber\\
& &+\int_0^1\left((\Casimir \alpha)_t,h_t\right)dt
-\int_0^1\int_0^1\left(\Casimir \alpha_t,h_s\right)dtds,
\end{eqnarray}
where
$(T_v\alpha)_t=\int_0^t[\alpha_s,v]ds-t\int_0^1[\alpha_s,v]ds$
~$(v\in {\mathfrak g})$,
$b(t,\gamma)=\int_0^t(R_{{\gamma_s}})_{\ast}^{-1}\circ d\gamma_s
\in {\mathfrak g}$.
Here $[\cdot,\cdot]$ denotes the Lie bracket.
Also $(\square\alpha,h)$ denotes the coupling of
$\square\alpha(\gamma)\in H_0^{\ast}$ and $h\in H_0$.
\end{lem}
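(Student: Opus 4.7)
The plan is to derive this Weitzenb\"ock identity by expressing both $\square$ and the Bochner Laplacian $\nabla^{\ast}\nabla$ in a common orthonormal frame of right-invariant vector fields and carefully tracking the discrepancy. I would fix an orthonormal basis $\{k_i\}_{i\ge 1}$ of $H_0$ and use the associated right-invariant vector fields $X_{k_i}$ on $L_e(G)$, which give an orthonormal frame of $T_\gamma L_e(G)$ at every $\gamma$. The commutation relation $[X_{h_1},X_{h_2}] = X_{[h_2,h_1]}$ already recorded in the paper yields
\begin{equation*}
d\alpha(X_{k_i},X_{k_j}) = X_{k_i}(\alpha(X_{k_j})) - X_{k_j}(\alpha(X_{k_i})) - \alpha(X_{[k_j,k_i]}),
\end{equation*}
and, modulo the logarithmic derivative of $\nu_e$, $d^{\ast}\alpha = -\sum_i (\nabla_{X_{k_i}}\alpha)(X_{k_i})$. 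Expanding $(dd^{\ast}+d^{\ast}d)\alpha$ in this frame and using that the Levi-Civita connection of the bi-invariant metric on $G$ satisfies $\nabla_{X_h}X_k = \tfrac{1}{2}X_{[h,k]}$, I would rearrange the result into a Bochner part $\nabla^{\ast}\nabla\alpha$ plus commutator and curvature residues.

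Next I would identify the curvature residues. The intrinsic Ricci curvature of the bi-invariant metric on $G$ equals $-\tfrac{1}{4}\Casimir$, so once it is lifted pointwise along the path it produces the integrand $(\Casimir\,\alpha_t, h_t)$. Because the loop-space tangent space $H_0 = \{h \in H : h_1 = 0\}$ is a codimension-one subspace of $H$, the pointwise action of $\Casimir$ must be orthogonally projected onto $H_0$, and that projection is exactly what yields the subtracted double integral $-\int_0^1\!\int_0^1 (\Casimir\,\alpha_t, h_s)\,dt\,ds$. The purely algebraic residual $\alpha$ emerges from combining the bracket terms in the expansion of $d$ with the identity $\nabla_{X_h}X_k = \tfrac{1}{2}X_{[h,k]}$ and the fact that $\sum_i \operatorname{ad}(k_i(t))\operatorname{ad}(k_i(t))^{\ast}$ produces a constant multiple of the identity.

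The term $T_{b(1)}\alpha$ enters only through the pinning. Since $\nu_e$ has density $p(1,e,e)^{-1}\delta_e(\gamma_1)$ with respect to the Brownian measure $\nu$ on $P_e(G)$, the Driver--Bismut integration-by-parts formula on $L_e(G)$ reads
\begin{equation*}
\int_{L_e(G)} X_h f\,d\nu_e = \int_{L_e(G)} f\,\delta_{\nu_e}(X_h)\,d\nu_e,
\end{equation*}
where $\delta_{\nu_e}(X_h)$ contains, besides the usual stochastic integral $\int_0^1(\dot h_t, d\gamma_t)$ and the Ricci-type term proportional to $\Casimir$, a pinning correction linear in $b(1)=\int_0^1(R_{\gamma_s})_{\ast}^{-1}\!\circ d\gamma_s$. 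Substituting this into the frame expression for $d^{\ast}$ and retaining the new boundary contribution produces precisely the operator $T_{b(1)}$ applied to $\alpha$, which is the projection to $H_0$ of the path $t\mapsto \int_0^t [\alpha_s, b(1)]\,ds$.

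The main obstacle is the last step: justifying the integration by parts for the singular measure $\nu_e$ rigorously, in particular isolating the $b(1)$ contribution as an honest $L^2(\nu_e)$-object rather than a formal symbol, and verifying that the infinite frame sums $\sum_i (\cdots)$ coming from $d^{\ast}d$ and $\nabla^{\ast}\nabla$ can be reorganised so that the Casimir action collapses from an $H_0$-trace into the finite-dimensional $\Casimir = \sum_{i=1}^d(\operatorname{ad}\,\ep_i)^2$ acting pointwise in the fibre $\mathfrak g$. The natural way to handle both points is to perform the computation first for smooth cylindrical $\alpha \in \FC(\wedge^1T^{\ast}L_e(G))$, where only finitely many directions in the frame contribute and all integrations by parts are literal, and then to extend by closure using that this class is a core for $\square$ (invoked already in the paper via the essential self-adjointness result cited from Shigekawa).
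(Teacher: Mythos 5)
Your overall strategy --- expand $dd^{\ast}+d^{\ast}d$ and $\nabla^{\ast}\nabla$ in an orthonormal frame of right-invariant vector fields on cylindrical forms and match the residues --- is exactly the paper's. But two of the formulas you plan to feed into that computation are wrong, and the calculation would not close with them. First, the connection: you take $\nabla_{X_h}X_k=\tfrac12 X_{[h,k]}$, which is the Levi-Civita connection of the \emph{bi-invariant} metric on the finite-dimensional group $G$. The metric on $T_\gamma L_e(G)\cong H_0$ is the $H^1$ inner product $\int_0^1(\dot h,\dot k)\,dt$, which is only right-invariant, and its Levi-Civita connection is $\nabla_{X_h}X_k=X_{-P_0\int_0^{\cdot}[h_s,\dot k_s]ds}$ with $P_0h=h_t-th_1$ (Lemma~\ref{final lemma3}\,(2)). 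These are genuinely different: your formula is antisymmetric in $(h,k)$, whereas the correct one has a nonzero symmetric part, and the entire Casimir contribution in the statement arises from summing $\alpha(\nabla_{e_i}\nabla_{e_i}h)$ and $\tfrac12\sum_{i,j}\alpha([e_j,e_i])([e_j,e_i],h)$ over a complete orthonormal system of $H_0$ using this specific $P_0\int_0^{\cdot}[\cdot,\cdot]$ structure. With $\tfrac12[h,k]$ you would get different zeroth-order terms, and in particular the double integral $-\int_0^1\!\int_0^1(\Casimir\alpha_t,h_s)\,dt\,ds$ is produced by the $-th_1$ part of $P_0$ inside the connection, not by ``projecting the pointwise Casimir action onto $H_0$'' as you assert.

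Second, the attribution of the remaining terms is off. The ``$+\alpha$'' is the flat Gaussian (Ornstein--Uhlenbeck) contribution coming from differentiating the divergence $(h,b)=\int_0^1(\dot h,db)$ in a frame direction, i.e.\ from the commutator $[X_k,\,\cdot\,(h,b)]$ producing $\langle h,k\rangle_{H}$; it has nothing to do with $\sum_i\mathrm{ad}(k_i(t))\mathrm{ad}(k_i(t))^{\ast}$ being a multiple of the identity. Likewise $T_{b(1)}\alpha$ is not a ``pinning correction'': the same stochastic-integral term $(h,b)$ appears in the divergence on the free path group, and $T_{b(1)}$ emerges from the explicit cancellation in $I_1=\alpha\bigl(P_0\int_0^{\cdot}[h_s,db_s]\bigr)-\sum_i(e_i,b)\alpha([e_i,h])+\sum_i(e_i,b)\alpha(\nabla_{e_i}h)$, which requires the correct connection and the completeness relation for $\{e_i\}$ in $H_0$; your plan never isolates this combination. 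Your final point --- do everything on $\FC(\wedge^1T^{\ast}L_e(G))$ and extend by essential self-adjointness --- is fine and is what the paper does implicitly, but the core algebraic identities you intend to use must be replaced by those of Lemma~\ref{final lemma3} before the argument can go through.
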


For simplicity we denote
$$
\square=\nabla_{\mu_e}^{\ast}\nabla+I+T_{b(1)}+T_2+T_3,
$$
where $T_2,T_3$ are 0-order operators acting on 
$1$-forms corresponding to the terms
$\int_0^1\left((\Casimir \alpha)_t,h_t\right)dt$
and
$-\int_0^1\int_0^1\left(\Casimir \alpha_t,h_s\right)dtds$
respectively.

\bigskip

\begin{proof}
[Proof of Theorem~$\ref{main theorem 2}$]
~Let $\alpha\in L^2(\wedge^1T^{\ast}L_e(G))$
and assume that $\square \alpha=0$.
We need to show that $\alpha\in 
\cap_{1<p<2} {\mathbb D}^{\infty,p}(\wedge^1T^{\ast}L_e(G))$.
Let $\theta\in \FC(\wedge^1T^{\ast}L_e(G))$.
Then
\begin{eqnarray}
\left(\alpha,\nabla_{\nu_e}^{\ast}\nabla\theta\right)&=&
\left(\alpha,(\square-I-T_{b(1)}-T_2-T_3)\theta\right)\nonumber\\
&=&-\left((I+T_{b(1)}^{\ast}+T_2^{\ast}+T_3^{\ast})\alpha,\theta\right).
\end{eqnarray}
Since $b(1)\in \cap_{p>1}L^p(L_e(G),d\nu_e)$,
the weak derivative 
$\nabla_{\nu_e}^{\ast}\nabla\alpha$ belongs to
$\cap_{1<p<2}L^p(\wedge^1T^{\ast}L_e(G))$.
Hence
by Theorem~2.16 in \cite{aida-sobolev},
$
\alpha\in \cap_{1<p<2}{\mathbb D}^{2,p}
(\wedge^1T^{\ast}L_e(G))
$
which implies $\alpha\in \cap_{1<p<2}{\mathbb D}^{\infty,p}(\wedge^1L_e(G))$.
Also note that $d\alpha=0$.
Let $f$ and $f^K$ be the function in Theorem~\ref{main theorem on loop}
Then $df^K=\psi_K'(f)\alpha$ on $L_e(G)$.
Note that $\alpha$ satisfies the equation 
$d^{\ast}\alpha=0$ on $L_e(G)$.
Hence we have
\begin{eqnarray*}
\int_{L_e(G)}|\alpha(\gamma)|_{T_{\gamma}L_e(G)}^2d\nu_e(\gamma)&=&
\lim_{K\to\infty}\int_{L_e(G)}\left(\alpha(\gamma),
\psi_K'(f)\alpha(\gamma\right)_{T_{\gamma}L_e(G)}
d\nu_e(w)\nonumber\\
&=&
\lim_{K\to\infty}\int_{L_e(G)}d^{\ast}\alpha(\gamma)f^K(\gamma)
d\nu_e(\gamma)\nonumber\\
&=&0.
\end{eqnarray*}
This implies $\alpha=0$ which proves $\ker \square=\{0\}$.
We prove (\ref{decomposition}).
Let $H_1=\overline{\left\{df~|~f\in \FC(L_e(G))\right\}}$
and 
$H_2=\overline{\left\{
d^{\ast}\alpha~|~\alpha\in \FC(\wedge^2T^{\ast}L_e(G))\right\}}$.
It is easy to see $H_1\cap H_2=\{0\}$.
Let $H_3=(H_1\oplus H_2)^{\perp}$.
Assume there exists a non-zero $\alpha\in H_3$.
Then for any smooth cylindrical $1$-form $\beta$,
$$
\left(\square\beta,\alpha\right)_{L^2(\wedge^1T^{\ast}L_e(G))}
=\left(dd^{\ast}\beta, \alpha\right)+
\left(d^{\ast}d\beta,\alpha\right).
$$
Since $d^{\ast}\beta$ and $d\beta$ can be approximated by
smooth cylindrical functions and $1$-forms respectively,
we obtain $(\square\beta,\alpha)=0$.
This shows $\square\alpha=0$ in weak sense.
By the essential-selfadjointness of $(\square,\FC(\wedge^1T^{\ast}L_e(G)))$
which is due to \cite{shigekawa2},
this implies $\alpha\in {\rm D}(\square)$ and
$\square\alpha=0$.
Hence $\alpha=0$ which completes the proof.
\end{proof}

We give a proof of Weitzenb\"ock formula for the sake of completeness.
The reader may find the proof in \cite{fang-franchi}.
Also we note that this calculation is essentially similar to that of
$\Gamma_2$ of the Dirichlet form in \cite{getzler, shigekawa1}.
First we recall some results in \cite{aida-sobolev}.

\begin{lem}\label{final lemma3}
Let $X_h$ be the right-invariant vector field corresponding to
$h\in H$.

\noindent
$(1)$~We have
\begin{equation*}
\int_{L_e(G)}X_hf\cdot gd\nu_e=
\int_{L_e(G)}f\cdot\left(-X_hg+(h,b)g\right)d\nu_e.
\end{equation*}
Here $(h,b)=\int_0^1(\dot{h}(s),db(s))$.

\noindent
$(2)$~For any $h,k\in H$,
\begin{equation*}
\nabla_{X_h}X_k=X_{-P_0\int_0^{\cdot}[h_s,\dot{k}_s]ds},
\end{equation*}
where $P_0h=h_t-th_1$.

\noindent
$(3)$~For any $h,k\in H_0$,
$$
[X_h,X_k]=X_{[k,h]},
$$
where $[X_h,X_k]$ is the Lie bracket of the vector field on
$L_e(G)$.
\end{lem}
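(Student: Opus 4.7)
My plan is to prove the three parts of Lemma~\ref{final lemma3} in the order (3), (2), (1), since (2) depends on (3) via the Koszul formula, while (1) is independent but the most analytically delicate. The strategy is: pointwise reduction to the finite-dimensional Lie group for (3); algebraic Koszul manipulation using $\mathrm{ad}$-invariance for (2); Driver-type quasi-invariance of $\nu_e$ for (1).

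For part (3), the flow of $X_h$ on $L_e(G)$ is $\phi_\tau^h(\gamma)(t)=\exp(\tau h_t)\gamma_t$, which preserves $L_e(G)$ when $h\in H_0$ because $h_0=h_1=0$. Since this flow acts pointwise in $t$, the Lie bracket $[X_h,X_k]$ at $\gamma$ reduces, time $t$ by time $t$, to the bracket of right-invariant vector fields on the finite-dimensional Lie group $G$ with values $h_t,k_t$ at $e$; the standard sign convention for right-invariant vector fields gives this bracket as $[k_t,h_t]=[k,h]_t$, so $[X_h,X_k]=X_{[k,h]}$. For part (2), I would invoke the Koszul formula
\[
2\langle\nabla_{X_h}X_k,X_l\rangle=X_h\langle X_k,X_l\rangle+X_k\langle X_h,X_l\rangle-X_l\langle X_h,X_k\rangle+\langle[X_h,X_k],X_l\rangle-\langle[X_h,X_l],X_k\rangle-\langle[X_k,X_l],X_h\rangle.
\]
The first three terms vanish because the right-invariant metric satisfies $\langle X_h,X_k\rangle_\gamma=\int_0^1(\dot h_s,\dot k_s)\,ds$, which is independent of $\gamma$. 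Substituting part (3) into the three remaining bracket terms and using the $\mathrm{ad}$-invariance identity $([a,b],c)=(a,[b,c])$ for the bi-invariant inner product on $\mathfrak g$, a straightforward integration by parts in $s$ collapses them to a single expression of the form $-\int_0^1\bigl(\tfrac{d}{ds}\int_0^s[h_r,\dot k_r]\,dr,\dot l_s\bigr)ds$. Comparing with $\langle X_u,X_l\rangle=\int_0^1(\dot u_s,\dot l_s)\,ds$ for $l\in H_0$ identifies the density of $\nabla_{X_h}X_k$ up to projection onto $H_0$; the operator $P_0$ appears precisely because $\nabla_{X_h}X_k$ must itself be tangent to $L_e(G)$.

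For part (1), I would invoke Driver's quasi-invariance theorem for $\nu_e$ under the flow $\phi_\tau^h$: when $h\in H_0$, the flow preserves $L_e(G)$ and the Radon-Nikodym derivative $d(\phi_{-\tau}^h)_*\nu_e/d\nu_e$ is an explicit exponential whose logarithmic derivative at $\tau=0$ is $-(h,b)=-\int_0^1(\dot h_s,db_s)$, using that the anti-development $b(t,\gamma)=\int_0^t(R_{\gamma_s})^{-1}_*\circ d\gamma_s$ is the $\mathfrak g$-valued process driving $\gamma$. Differentiating the quasi-invariance identity $\int(f\circ\phi_\tau^h)g\,d\nu_e=\int f\,g\circ\phi_{-\tau}^h\cdot(d(\phi_{-\tau}^h)_*\nu_e/d\nu_e)\,d\nu_e$ at $\tau=0$ produces the stated formula. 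For general $h\in H$, the flow no longer preserves $L_e(G)$, but the formula extends via the disintegration $\nu=\int_G\nu_a\,p(1,e,a)\,da$ together with the smoothness and positivity of the heat kernel $p(1,e,\cdot)$ near $e$, after verifying that $(h,b)\in\bigcap_{p>1}L^p(\nu_e)$. The main obstacle is this part (1): the precise derivation of Driver's formula for the pinned measure requires the It\^o-theoretic analysis of the conditioned diffusion, the fact that $b$ has a different law under $\nu_e$ than under $\nu$, the integrability of $(h,b)$, and careful bookkeeping in the disintegration argument to handle $h\notin H_0$. By contrast, part (3) is essentially pointwise and part (2) is a routine algebraic consequence of (3) and Koszul once $\gamma$-independence of the metric is used.
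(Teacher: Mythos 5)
The paper does not actually prove Lemma~\ref{final lemma3}: it is recalled verbatim from \cite{aida-sobolev}, so there is no in-paper argument to compare with, and a self-contained derivation like yours is welcome. Your parts (3) and (2) are correct. The pointwise reduction for (3) reproduces the computation already recorded in Section~2 ($[X_{h},X_{k}]=X_{[k,h]}$), and in the Koszul formula the three metric terms indeed vanish because $\langle X_h,X_k\rangle_\gamma=\langle h,k\rangle_H$ is constant in $\gamma$; substituting (3) and using $([a,b],c)=(a,[b,c])$ collapses the three bracket terms to $\langle\nabla_{X_h}X_k,X_l\rangle=-\int_0^1([h_s,\dot k_s],\dot l_s)\,ds$ for $l\in H_0$ (no integration by parts is actually needed, only the product rule for $\frac{d}{ds}[k_s,h_s]$ and ad-invariance), and testing against $l\in H_0$ is exactly what produces the projection $P_0$, as you say.

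The soft spots are in part (1). First, a bookkeeping slip: the change-of-variables identity should carry $d(\phi^h_{\tau})_*\nu_e/d\nu_e$, not $d(\phi^h_{-\tau})_*\nu_e/d\nu_e$; with your stated logarithmic derivative $-(h,b)$ for the latter object, differentiating the identity as you wrote it yields the divergence with the wrong sign, so the two conventions must be fixed consistently. Second, and more substantively, the claimed extension to $h\in H\setminus H_0$ by disintegration cannot give the formula as stated: when $h_1\neq 0$ the flow $\gamma\mapsto e^{\tau h}\gamma$ moves the endpoint, and the adjoint of $X_h$ with respect to $\nu_e$ acquires an extra term from differentiating the endpoint conditioning (the logarithmic derivative of the heat-kernel pinning). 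Already for $G=U(1)$, writing $\nu_e$ as $Z^{-1}\delta_{2\pi\Z}(w_1)\,d\mu$ and integrating by parts on Wiener space produces the additional contribution $-h_1Z^{-1}E_\mu[fg\,\delta_{2\pi\Z}'(w_1)]$, which does not vanish in general; so prove (1) only for $h\in H_0$, which is the only case used later (in the proof of Lemma~\ref{final lemma2}). Finally, the analytic heart of (1) --- that the logarithmic derivative of the Radon--Nikodym density is $(h,b)$ --- is asserted by appeal to Driver's theorem; in this flat-connection setting it is more direct to obtain it from the Cameron--Martin/Girsanov theorem through the It\^o map of Proposition~\ref{solution of sde}: the flow corresponds to the adapted shift $w\mapsto w+\zeta(e^{\tau h},w)$ with $\dot\zeta_s=Ad(X_s^{-1})\dot h_s$, whose divergence is $\int_0^1(Ad(X_s^{-1})\dot h_s,dw_s)=(h,b)$, the It\^o--Stratonovich correction vanishing because $\mathrm{ad}(v)$ is skew-symmetric, hence traceless; one must also note that $D_\zeta X(1,e,w)=0$ precisely when $h_1=0$, which is where the restriction to $H_0$ enters.
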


\begin{proof}
[Proof of Lemma~$\ref{final lemma2}$]
 We fix a complete orthonormal system $\{e_i\}$ of $H_0$.
By Lemma~\ref{final lemma3}, for any smooth $1$-form $\alpha$
on $L_e(G)$,
\begin{equation*}
d^{\ast}\alpha=\sum_{i}\left(-X_{e_i}\left(\alpha(e_i)\right)
+(e_i,b)\alpha(e_i)\right),
\end{equation*}
where $\alpha(e_i)$ stands for
the coupling of $\alpha(\gamma)\in H_0^{\ast}$ 
and $e_i\in H_0$.
Let $\beta$ be a smooth $2$-form on $L_e(G)$.
By Lemma~\ref{final lemma3},
\begin{eqnarray*}
(d^{\ast}\beta)(e_k)
&=&-\sum_{i}X_{e_i}\left(\beta(e_i,e_k)\right)+\sum_i(e_i,b)\beta(e_i,e_k)
-\sum_{i<j}\beta(e_i,e_j)\left([e_j,e_i],e_k\right).
\end{eqnarray*}
Using these, we have for $h\in H_0$
\begin{multline*}
\left(\left(d^{\ast}d+dd^{\ast}\right)\alpha\right)(h)=
-\sum_{i}X_{e_i}\left(X_{e_i}(\alpha(h))\right)+
\sum_{i}(e_i,b)X_{e_i}(\alpha(h))
+\alpha(h)\\
+\sum_{i<j}\alpha\left([e_j,e_i]\right)
\left([e_j,e_i],h\right)+\alpha\left(P_0\int_0^{\cdot}[h_s,db_s]\right)
-\sum_i(e_i,b)\alpha([e_i,h])\nonumber\\
+\sum_iX_{[h,e_i]}(\alpha(e_i))+
\sum_iX_{e_i}(\alpha([h,e_i]))-\sum_{i<j}\left(
X_{e_i}(\alpha(e_j))-X_{e_j}(\alpha(e_i))
\right)\left([e_j,e_i],h\right)
\end{multline*}
By the definition of the covariant derivative,
we have
\begin{eqnarray*}
(\nabla_{\nu_e}^{\ast}\nabla\alpha)(h)&=&
-\sum_iX_{e_i}\left(X_{e_i}(\alpha(h))\right)+\sum_{i}(e_i,b)X_{e_i}(\alpha(h))
-\sum_i(e_i,b)\alpha(\nabla_{e_i}h)\nonumber\\
& &+2\sum_iX_{e_i}\left(\alpha(\nabla_{e_i}h)\right)-
\sum_i\alpha(\nabla_{e_i}\nabla_{e_i}h),
\end{eqnarray*}
where $\nabla_hk=-P_0\left(\int_0^{\cdot}[h_s,\dot{k}_s]ds\right)$
for $h,k\in H_0$.
Hence
\begin{eqnarray*}
\left(\left(d^{\ast}d+dd^{\ast}\right)\alpha\right)(h)&=&
\left(\nabla_{\nu_e}^{\ast}\nabla\alpha\right)(h)+
\alpha(h)+\sum_i\alpha(\nabla_{e_i}\nabla_{e_i}h)\nonumber\\
& &+\frac{1}{2}\sum_{i,j}\alpha\left([e_j,e_i]\right)
\left([e_j,e_i],h\right)+I_1+I_2.
\end{eqnarray*}
Here
\begin{eqnarray*}
I_1&=&\alpha\left(P_0\int_0^{\cdot}[h_s,db_s]\right)
-\sum_i(e_i,b)\alpha([e_i,h])+\sum_i(e_i,b)\alpha(\nabla_{e_i}h),
\end{eqnarray*}
\begin{eqnarray*}
I_2&=&
\sum_iX_{[h,e_i]}\left(\alpha(e_i)\right)+
\sum_iX_{e_i}\left(\alpha([h,e_i])\right)-\sum_{i<j}\left(
X_{e_i}(\alpha(e_j))-X_{e_j}(\alpha(e_i))
\right)\left([e_j,e_i],h\right)\nonumber\\
& &-2\sum_iX_{e_i}\left(\alpha(\nabla_{e_i}h)\right).
\end{eqnarray*}
By the explicit calculation,
$
I_1=(T_{b(1)}\alpha)(h)
$
and
$I_2=0$.
We calculate $\frac{1}{2}\sum_{i,j}\alpha\left([e_j,e_i]\right)
\left([e_j,e_i],h\right)$ and
$\sum_i\alpha(\nabla_{e_i}\nabla_{e_i}h)$.
\begin{eqnarray*}
\sum_i\alpha(\nabla_{e_i}\nabla_{e_i}h)
&=&
\sum_i\int_0^1\left(\dot{\alpha}_t, -\left[e_i(t),
[e_i(t),\dot{h}_t]-\int_0^1[e_i(s),\dot{h}_s]ds\right]\right)dt\nonumber\\
&=&
-\sum_i\int_0^1\left([\dot{\alpha}_t,e_i(t)],
		[\dot{h}_t,e_i(t)]\right)dt
\nonumber\\
& &+\sum_i\left(\int_0^1\left[\dot{\alpha}_t, e_i(t)\right]dt,
\int_0^1\left[\dot{h}_s,e_i(s)\right]ds\right).
\end{eqnarray*}
\begin{eqnarray*}
\lefteqn{\frac{1}{2}\sum_{i,j}\alpha\left([e_j,e_i]\right)
\left([e_j,e_i],h\right)}\nonumber\\
& &=
\sum_i\int_0^1\left(
[\dot{\alpha}_t,e_i(t)]-\int_0^1[\dot{\alpha}_t,e_i(t)]dt,
[\dot{h}_t,e_i(t)]-\int_0^1[\dot{h}_t,e_i(t)]dt
\right)dt\nonumber\\
& &-\sum_i\int_0^1\left(
[\dot{\alpha}_t,e_i(t)]-\int_0^1[\dot{\alpha}_t,e_i(t)]dt,
\int_0^t[\dot{h}_s,\dot{e}_i(s)]ds-
\int_0^1\left(\int_0^u[\dot{h}_s,\dot{e}_i(s)]ds\right)du\right)dt.
\nonumber\\
& &
\end{eqnarray*}
Thus
\begin{eqnarray*}
\lefteqn{\sum_i\alpha(\nabla_{e_i}\nabla_{e_i}h)
+\frac{1}{2}\sum_{i,j}\alpha\left([e_j,e_i]\right)
\left([e_j,e_i],h\right)}\nonumber\\
& &=
-\sum_i\int_0^1\left(
[\dot{\alpha}_t,e_i(t)]-\int_0^1[\dot{\alpha}_t,e_i(t)]dt,
\int_0^t[\dot{h}_s,\dot{e}_i(s)]ds\right)dt\nonumber\\
& &=-\sum_{i=1}^d\left(\int_0^1\left[[\alpha_t,\ep_i],\ep_i\right]dt,
\int_0^1h_tdt \right)-
\sum_{i=1}^d\int_0^1\left([\alpha_t,\ep_i],
		     [h_t,\ep_i]\right)dt\nonumber\\
& &=-\left(\int_0^1(\Casimir\alpha)_tdt, \int_0^1h_tdt\right)
+\int_0^1\left(\Casimir\alpha_t,h_t\right)dt.
\end{eqnarray*}
This completes the proof.
\end{proof}

\noindent
{\bf Acknowledgment}~
I would like to thank the referee for 
the careful reading of the manuscript 
and valuable comments 
that helped to improve the paper.

\end{document}